\numberwithin{equation}{section}
\newtheoremstyle{bfnote}%
  {}{}
  {\itshape}{}
  {\bfseries}{.}
  { }{\thmname{#1}\thmnumber{ #2}\thmnote{ (#3)}}
\newtheoremstyle{bfdef}%
  {}{}
  {}{}
  {\bfseries}{.}
  { }{\thmname{#1}\thmnumber{ #2}\thmnote{ (#3)}}
\theoremstyle{bfnote}
\newtheorem{theorem}{Theorem}[section]
\newtheorem{lemma}[theorem]{Lemma}
\newtheorem{proposition}[theorem]{Proposition}
\newtheorem{corollary}[theorem]{Corollary}
\theoremstyle{bfdef}
\newtheorem{definition}[theorem]{Definition}
\newtheorem{assumption}[theorem]{Assumption}
\newtheorem{remark}[theorem]{Remark}
\newtheorem{example}[theorem]{Example}
\newcommand{\mc}[1]{\mathcal{#1}}
\newcommand{\mbf}[1]{\mathbf{#1}}
\newcommand{\mfk}[1]{\mathfrak{#1}}
\newcommand{\la}{\langle}
\newcommand{\ra}{\rangle}
\newcommand{\lla}{\langle\langle}
\newcommand{\rra}{\rangle\rangle}
\newcommand{\R}{\mathbb{R}}
\newcommand{\veps}{\varepsilon}
\newcommand{\n}[1]{\|#1\|}
\newcommand{\II}{\mathbf{I}_{d \times d}}
\renewcommand\o\operatorname
\renewcommand\r\mathrm
\renewcommand\c\mathcal
\newcommand{\yyy}{}
\DeclareMathOperator{\dd}{d}
\DeclareMathOperator{\U}{U}
\DeclareMathOperator{\SO}{SO}
\DeclareMathOperator{\GL}{GL}
\DeclareMathOperator{\tr}{tr}
\providecommand\x; %
\renewcommand{\x}{x}
\providecommand\X; %
\renewcommand{\X}{X}
\providecommand\gt; %
\renewcommand{\gt}{\xi}
\providecommand\a; %
\renewcommand{\a}{\alpha}
\providecommand\cc; %
\renewcommand{\cc}{voltage }
\providecommand\ccs; %
\renewcommand{\ccs}{voltages }
\providecommand\XX; %
\renewcommand{\XX}{\Gamma}
\providecommand\gtt; %
\renewcommand{\gtt}{\tau}
\providecommand\E; %
\renewcommand{\E}{E}
\providecommand\Hom; %
\renewcommand{\Hom}{H}
\providecommand\Hol; %
\renewcommand{\Hol}{\mc{H}}
\providecommand\Lps; %
\renewcommand{\Lps}{\mc{L}}
\providecommand\Lpl; %
\renewcommand{\Lpl}{\Delta}
\providecommand\hgen; %
\renewcommand{\hgen}{L}
\providecommand\vg; %
\renewcommand{\vg}{\chi}
\providecommand\hol; %
\renewcommand{\hol}{\mathfrak{h}}
\author[]{\small Jonas Cassel$^{1,2}$, Fabio Schlindwein$^{1}$, Peter Albers$^{1,2}$, Christoph Schn\"{o}rr$^{1,2}$}
\address{$^1$Institute for Mathematics, Heidelberg University, Germany \\
$^2$Research Station Geometry and Dynamics, Heidelberg University, Germany}
\email{cassel@math.uni-heidelberg.de}
\date{\today}
\thanks{This work is funded by the Deutsche Forschungsgemeinschaft (DFG) under Germany's Excellence Strategy EXC-2181/1 - 390900948 (the Heidelberg STRUCTURES Excellence Cluster). This work was funded by the Deutsche Forschungsgemeinschaft (DFG), grant SCHN 457/17-2, within the priority programme
SPP 2298: Theoretical Foundations of Deep Learning.}
\keywords{gauge symmetry, discrete vector bundles, gauged Laplacian, discrete Yang-Mills energy, synchronization}
\subjclass[2020]{53-08, 53A70, 53C07, 53C29, 53Z50, 05C10, 57M60}
\title{Yang-Mills Meets Data}
\begin{document}
\maketitle

\begin{abstract}
\textit{Gauge symmetric methods} for data representation and analysis utilize tools from the differential geometry of \textit{vector bundles} in order to achieve consistent data processing architectures with respect to local symmetry and equivariance.
In this work, we elaborate concepts of geometric gauge theory for data science. Motivated by \textit{lattice gauge theory}, we focus on discrete descriptions of vector bundles for data representation and analysis, with clear relations to the established mathematical bundle formalism. Our approach unifies various existing approaches to data processing via vector bundles, within the framework of gauge theory.
We provide geometric insights into \textit{gauge symmetric heat kernel operators} that are closely related to graph connection Laplacians, and into their data transformation properties in terms of the non-trivial nullspace of the corresponding \textit{gauged Laplacians}. In particular, we utilize a \textit{discrete Yang-Mills energy} in order to characterize significant properties of the heat kernel in terms of associated \textit{synchronization problems}.
\end{abstract}

\tableofcontents

\section{Introduction}
\subsection{Node Features on Graphs, Gauge Invariance and Heat Kernels}\label{sec:intro1}

		In this work, we analyze \textit{node feature data on graphs} using \textit{vector bundle gauge theory}. Gauge theory is a widely employed modelling paradigm in theoretical physics \cite{peskinIntroductionQuantumField1995a,hamiltonMathematicalGaugeTheory2017} and differential geometry \cite{Berline:1996aa,Jost:2017aa}.
		It describes interactions of \textit{vector bundle sections} and \textit{gauge fields}, which are geometrically represented by \textit{Ehresmann connections} and organized through \textit{gauge symmetry} principles. Figure \ref{fig:intro} illustrates the data model used in this paper.

	Node feature data on graphs form a large class of relevant data like, for instance, images \cite{Aubert:2006aa,Handbook:2023aa}, tokenized text sequences \cite{vaswani2017attention,Siino:2024aa,Jia:2025aa} or community/network data \cite{Battiston:2020aa,Boguna:2021aa}.
	Corresponding processing architectures comprise \textit{node feature transformations} acting on node features, such that the class of the underlying data whose definition depends on the architecture, is preserved. Examples include linear inverse models \cite{Tarantola:2005aa,groetschLinearInverseProblems2011,Arridge:2019wu}, transformers \cite{vaswani2017attention} and graph neural networks \cite{kipfSemiSupervisedClassificationGraph2017a,bronstein2017geometric}. 

	We apply and interpret concepts from gauge theory in the context of data science in this paper, which requires discrete formulations of gauge theory. Specifically, node feature data on graphs are represented as \textit{sections of discrete vector bundles} while \textit{discretized gauge fields} are identified with \textit{graph voltages} \cite{grossTopologicalGraphTheory1987}.
	Our work is motivated by \textit{lattice/discrete gauge theory} \cite{baezSpinNetworkStates1996,Gattringer:2010zz,jiangGaugeTheoryGraphs2023} that provides the discretization formalism for gauge theories used in theoretical physics.
	
	In addition, we propose \textit{gauge invariant heat kernel transformations} as distinguished node feature transformations that respect discrete vector bundle structures.
	 Heat kernel transformations have a long tradition in data science and image processing \cite{Lindeberg:1994aa,Lindeberg:2022aa}. From this viewpoint, we devise advanced heat kernel methods rooted in discrete gauge theory and closely related to, but different from, \textit{graph connection Laplacians} \cite{Singer:2012aa} and \textit{Laplacians of cellular sheaves} \cite{hansenSpectralTheoryCellular2019a}.

	The gauged heat kernel approach establishes links between different approaches to discrete geometric modelling in data science, like the discrete bundle approaches of \cite{hansenOpinionDynamicsDiscourse2021,bodnarNeuralSheafDiffusion2023,bambergerBundleNeuralNetworks2024}, the gauge invariant methods of \cite{weilerCoordinateIndependentConvolutional2021,gerkenGeometricDeepLearning2021} and the vector bundle PDEs in \cite{batardHeatEquationsVector2011,batardClassGeneralizedLaplacians2014,batardHighOrderDIPVBTVImage2025}.
	Thus, gauge theory appears to provide a unifying principle for gauge invariance and vector bundle methods in data science, 
	with a clear path from smooth bundle frameworks in mathematics to discrete counterparts in data science.

	Consequently, along these lines, we furthermore transfer analysis methods for gauge fields to data analysis, based on \textit{discrete Yang-Mills energies} for graph voltages that represent discrete gauge fields. Yang-Mills theory \cite{atiyah1983yang,peskinIntroductionQuantumField1995a,baumEichfeldtheorieEinfuehrungDifferentialgeometrie2014,hamiltonMathematicalGaugeTheory2017,gaoGeometrySynchronizationProblems2019} is an analysis tool from vector bundle gauge theory for characterizing gauge fields in terms of \textit{curvature}, which historically generalized notions of Maxwell's theory of electromagnetism \cite{Garrity:2015aa}.

	The relevance for data science is again established in terms of heat kernels: We show that Yang-Mills energies help to understand the data transformation characteristics of gauge invariant heat kernel transformations. In particular, this highlights how \textit{topological} properties of underling graphs shape properties of the heat kernel transformation.

		\begin{figure}
		\centering
		\includegraphics[width=1.0\textwidth]{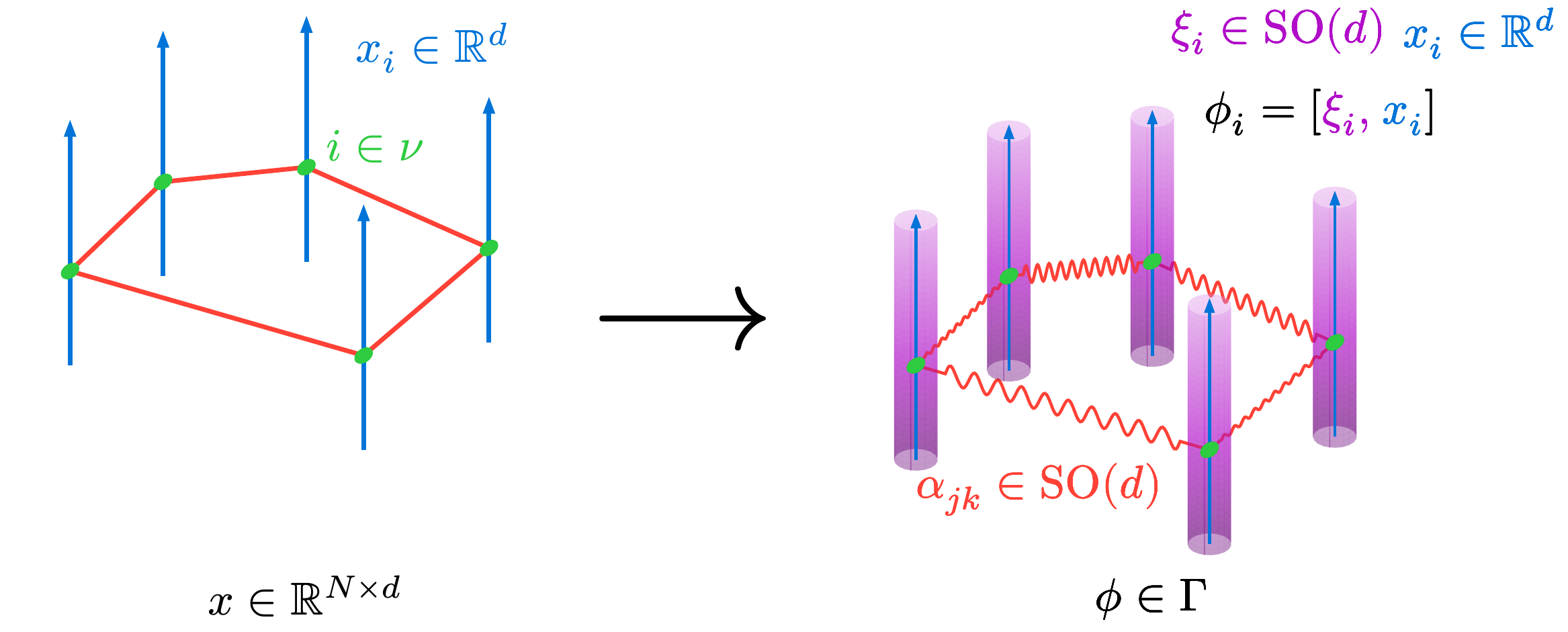}
		\caption{
			Illustration of the gauge invariant feature data model employed in this paper.
\textsf{Left:} Visualization of node feature data $x \in X =\R^{N \times d}$ over a graph $\eta$ with vertex set $\nu$ of cardinality $|\nu| = N$. Features are represented by matrices $x\in X$, with rows $x_{i}$ indexed by the vertices $i\in\nu$. \textsf{Right:} The scenario on the left changes when \textit{gauge invariant feature vectors} $\phi$ over $\eta$ are considered. The feature space $X$ is extended to the space of \textit{discrete bundle sections} $\Gamma$. A section $\phi\in\Gamma$ assigns to every node $i\in\nu$ a gauge invariant feature vector $\phi_i=[\xi_{i},x_{i}]$, where  $x_{i} \in \R^{d}$ is a regular feature vector, and $\xi_{i}$ is an element of $\SO(d)$; see Section \ref{sec:bundles} for details. Gauge invariant feature vectors $\phi_{i}$ interact with each other along the edges of the underlying graph, where the interaction is mediated in terms of \textit{graph voltages} $\alpha \in \mc{A}$ (a matrix $\alpha_{jk} \in \SO(d)$ for every edge $j\sim k$ of $\eta$) and associated \textit{gauged Laplacians} $\hgen_{\vg}$, as defined and studied in Section \ref{sec:hks}.
} 
		 \label{fig:intro}
	\end{figure}
	
\subsection{Related Work}
	\textit{Fiber bundle gauge theory:}
	This work is based on \textit{fiber bundle methods} from differential geometry \cite{steenrod1999topology,Hatcher:2002aa,hatcher2003vector,Jost:2017aa}. In particular, we make use of \textit{gauge theory}, which studies \textit{principal fiber bundles} and \textit{associated vector bundles} in terms of \textit{Ehresmann connections} \cite{baumEichfeldtheorieEinfuehrungDifferentialgeometrie2014,hamiltonMathematicalGaugeTheory2017,nakaharaGeometryTopologyPhysics2003}.

	\textit{Gauge symmetry and vector bundles in data science:}
	Smooth vector bundle methods and the concepts of gauge symmetry were introduced to data science in \cite{batardHeatEquationsVector2011,batardClassGeneralizedLaplacians2014,batardHighOrderDIPVBTVImage2025}, and \cite{weilerCoordinateIndependentConvolutional2021,gerkenGeometricDeepLearning2021}.

	\textit{Smooth and discrete physical gauge theory:}
	Physics based perspectives on geometric gauge theory can be found, e.g., in \cite{peskinIntroductionQuantumField1995a,hamiltonMathematicalGaugeTheory2017,tong2018gauge,wipfSymmetrienPhysikGruppen2023}. \cite{baezSpinNetworkStates1996,Gattringer:2010zz,rotheLatticeGaugeTheories2012,jiangGaugeTheoryGraphs2023} describe discrete notions of gauge theory which operate in terms discrete versions of connections or gauge fields.

	\textit{Discrete vector bundles in data science:}
	Discrete methods for connections on fiber bundles for discrete geometry modelling are discussed in, e.g., \cite{leokDiscreteTheoryConnections2005}, and connected to questions of data science, for instance, in works on geometric approaches to \textit{synchronization problems} \cite{Abbe:2018ab,gaoGeometrySynchronizationProblems2019,Carletti:2023aa} and \textit{vector diffusion maps} \cite{Singer:2012aa,gaoDiffusionGeometryFibre2021}.
	These notions were also extended by works on cellular sheaves in the context of \textit{opinion dynamics} \cite{hansenOpinionDynamicsDiscourse2021} and \textit{node classification} on graphs \cite{bodnarNeuralSheafDiffusion2023,bambergerBundleNeuralNetworks2024}.

	\textit{Heat kernel methods in data science: }
	The heat kernel methods that we refer to are inspired from many sources. For general heat kernel PDE frameworks for data science, see \cite{Alvarez:1993aa,Lindeberg:1994aa,Weickert:1998aa,Lindeberg:2022aa}. For heat kernels in differential geometry we refer to \cite{chavelEigenvaluesRiemannianGeometry1984,Davies:1989aa,Berline:1996aa,Grigoryan:2009aa,Avramidi:2015aa}.
	Notions of \textit{twisted/generalized Laplacians} on vector bundles as heat kernel generators can be found in \cite{Berline:1996aa,hamiltonMathematicalGaugeTheory2017}.
	Heat kernels on vector bundles have appeared in connection to image processing in \cite{batardHeatEquationsVector2011,batardClassGeneralizedLaplacians2014,batardHighOrderDIPVBTVImage2025}.
	Heat kernels in conjunction with \textit{discrete} vector bundle models have appeared in the context of \textit{vector diffusion maps} \cite{Singer:2012aa}, in the form of \textit{graph connection Laplacians}.
	These perspectives have extended into the realm of deep learning in \cite{bodnarNeuralSheafDiffusion2023,bambergerBundleNeuralNetworks2024}, where graph connection kernels are used as a tool for deep learning on graphs and node classification.
	Another perspective on graph connection Laplacians is provided by the \textit{twisted Hodge theory} of associated fiber bundles as presented in \cite{gaoGeometrySynchronizationProblems2019,gaoDiffusionGeometryFibre2021}.

	\textit{Yang-Mills theory:}
	Yang-Mills theory was developed in the context of non-abelian gauge theories in theoretical physics \cite{peskinIntroductionQuantumField1995a,baumEichfeldtheorieEinfuehrungDifferentialgeometrie2014,hamiltonMathematicalGaugeTheory2017}. Discrete notions have been introduced primarily on regular lattices \cite{Gattringer:2010zz}, but more general discrete notions have also been developed \cite{jiangGaugeTheoryGraphs2023}. Recently, other approaches to discrete Yang-Mills theory have been developed, diverging from the lattice gauge theory approach; see \cite{sushch2DDiscreteYang2024} and references therein. 

		Additional references on vector bundle methods in applied mathematics can be found here \cite{batardCliffordBundlesCommon2010,ohsawaSymmetryReductionOptimal2013,batardCovariantDerivativesTheir2014,budninskiyParallelTransportUnfolding2019,huroyanSolvingJigsawPuzzles2020}. Also relevant in the context of voltage graphs are generalizations of \textit{signed graphs} in network theory, see e.g.~\cite{Zaslavsky:1982aa,cloningerRandomWalksConductance2024,tianMatrixweightedNetworksModeling2024,tianStructuralBalanceRandom2024,evansSignedNetworksGroup2025}. See also \cite{favoniLatticeGaugeEquivariant2022,aronssonHomogeneousVectorBundles2022,aronssonGeometricalAspectsLattice2023a}, where machine learning frameworks for lattice gauge theory problems have been developed.

\subsection{Contribution}
	Our work extends the perspectives on gauge symmetry for data science adopted by \cite{weilerCoordinateIndependentConvolutional2021} and \cite{gerkenGeometricDeepLearning2021}, from a complementary point of view, in that we work out the details and implications of gauge invariant methods when applied to node features over a finite graph.
	In addition, we compare our discrete theory to smooth approaches to gauge invariance and specify explicitly \textit{how} smooth and discretized gauge theory interact within our formalism.
	Our approach removes technical requirements of fiber bundle theory and differential geometry from the gauge invariant frameworks of \cite{weilerCoordinateIndependentConvolutional2021,gerkenGeometricDeepLearning2021,batardHeatEquationsVector2011,batardClassGeneralizedLaplacians2014,gaoGeometrySynchronizationProblems2019}, by formulating results solely in terms of finite graph theory, which should make the framework accessible to a wider audience.

	We identify heat kernel methods as the \textit{key connection} between filter/kernel based approaches for vector bundle data analysis as in \cite{gerkenGeometricDeepLearning2021}, \cite{weilerCoordinateIndependentConvolutional2021} and the approaches in \cite{batardHeatEquationsVector2011,batardClassGeneralizedLaplacians2014} motivated by PDE methods.
	This helps to interpret the \textit{steerable kernel methods} of \cite{weilerCoordinateIndependentConvolutional2021,jennerSteerablePartialDifferential2022} in the context of differential operators on vector bundles, so that we can connect the vector bundle PDE methods of \cite{batardClassGeneralizedLaplacians2014} and the perspective on deep learning adopted in \cite{weilerCoordinateIndependentConvolutional2021}.
	Furthermore, our discrete gauge theoretic approach provides a perspective under which other discrete vector bundle data processing approaches may be related to the framework of \cite{weilerCoordinateIndependentConvolutional2021}, such as the synchronization bundle methods of \cite{gaoGeometrySynchronizationProblems2019} and the cellular sheaf methods of \cite{hansenOpinionDynamicsDiscourse2021,bodnarNeuralSheafDiffusion2023,bambergerBundleNeuralNetworks2024}.
	Thus our formalism `interpolates' between discrete and smooth vector bundle approaches to data science and establishes gauge theory as the organizing first principle.

	Based on these relations between data science and gauge theory, we extend the \textit{synchronization analysis} of \cite{gaoGeometrySynchronizationProblems2019} to a discrete Yang-Mills analysis and show the connection to heat kernel approaches.
	Additionally, we apply gauge theoretic arguments in order to analyze the data transformation properties of our heat kernel formalism. We show how the underlying graph structure shapes the nullspace of the generator of our heat kernel, in terms of a \textit{spanning tree isomorphism}. In particular, we state axioms which uniquely identifies our heat kernel operators among all gauge invariant data transformations.

	To summarize, we complement the existing apparatus for gauge symmetry in data science with concepts from discrete and smooth gauge theory. In particular, we show how tools from Yang-Mills theory integrate into data science.

\subsection{Organization}
	Section \ref{sec:bundles} introduces gauge symmetry for discrete data and its implications for data processing.

	Section \ref{sec:hks} defines gauge symmetric heat kernel methods for discrete vector bundles generated by gauged Laplacian operators, which are induced by graph voltages. These gauged Laplacians are closely related to graph connection Laplacians \cite{Singer:2012aa} and Laplacians of cellular sheaves \cite{hansenSpectralTheoryCellular2019a} and discrete twisted Hodge Laplacians \cite{gaoGeometrySynchronizationProblems2019}.
	We show furthermore that these specific Laplacians arise naturally when gauge invariance conditions are imposed.

	In Section \ref{sec:transform}, we analyze the data transformation properties of the gauged heat kernel, which shows that the structure of the kernel of the gauged Laplacian  closely relates to the synchronizability of the underlying voltage graph.

	We then proceed and connect these transformation properties of the heat kernel to Yang-Mills analysis of voltage graphs in Section \ref{sec:ym}. We show that the Yang-Mills energy reveals whether a given voltage is synchronizable in terms of scalar quantities, which is relevant for the data transformation characteristics of the gauged heat kernel.

	A discussion of the relationship between smooth and discrete gauge theory in Appendix \ref{sec:app} complements our paper.

\section{Gauge Invariant Representations of Discrete Data}\label{sec:bundles}
	This section first recalls the concept of \textit{gauge invariance} for data science as introduced in \cite{batardHeatEquationsVector2011,weilerCoordinateIndependentConvolutional2021,gerkenGeometricDeepLearning2021}. Then we extend this perspective and provide additional interpretations for the formalism of associated vector bundles, in order to highlight its significance for data science.

	We focus on gauge invariance concepts for \textit{discrete data}. That is, we do not impose smoothness or manifold assumptions on our data. We relate our approach to smooth vector bundle theory in Appendix \ref{sec:app}.

\subsection{Notation}
	
	Let $\eta$ be a finite, \textit{connected}, simple, directed graph with vertex set $\nu$ and directed edge relation $<$, i.e. we write $i < j$ if there is a directed edge from node $i$ to node $j$. We also denote by $\veps$ the edge set of $\eta$ and by $\sim$ the \textit{symmetrization} of $<$, that is $i \sim j$ if $i < j$ or $j < i$ for $i,j \in \nu$. We write also $i>j \in \veps$ and $i\sim j \in \veps$ to address the edges of a graph $\eta$.

	The symbol $N \coloneqq | \nu |$ denotes the number of graph nodes such that set $\nu$ can be identified with the set $\{1,\dots , N\}$ after fixing a total order of the vertices. 
	
	We consider any fixed dimension $d > 0$ and define the set
	\begin{equation}\label{eq:def-data-space-X}
		\X = \{ \x : \nu \to \R^{d} \} = (\R^{d})^{N},
	\end{equation}
	of $d$-dimensional vectors indexed by the nodes of the graph $\eta$. We equip $\R^{d}$ with the Euclidean scalar product $\la v,w\ra = v^\top w$, such that $\X$ also becomes an inner product vector space.  Elements $\x \in \X$ are represented by tuples $x = (x_1, \ldots, x_N)$ with $\x_i \in \R^d$ for $i \in \nu$. We identify $X$ with the space of matrices $\R^{N\times d}$ by identifying the tuple $x = (x_i)_{i \in \nu}$ with the $N \times d$ matrix whose rows are given by $\x_i$. 

	We assume furthermore that the graph $\eta$ comes along with a symmetric weight function
	\begin{equation}
		\kappa : \nu \times \nu \to \R_{\geq 0}, \quad \kappa_{ij} = \kappa_{ji}, \quad \kappa_{ij} = 0 \quad \text{if} \quad i \not\sim j,
	\end{equation}
	that is, $\kappa$ is a positive weight function on the symmetrized (undirected) edges of the graph.

\subsection{Gauge Symmetry for Discrete Data}
	This subsection introduces coordinate independence for data modelling as in \cite{weilerCoordinateIndependentConvolutional2021,gerkenGeometricDeepLearning2021}, here adapted to the case of \textit{discrete} data. Here `discrete' refers to a graph $\eta$ that supports the data, rather than to the range of the data that are real-valued. Suppose a data space $X$ of the format \eqref{eq:def-data-space-X} is given. We want to transform data samples $\x \in \X$ in terms of a function $f : \X \to \X$.

	\textit{Gauge invariance} entails a change of perspective on the feature vector space $X=\R^{N \times d}$. We postulate that there exists an abstract vector space $\XX$, which is the natural space for representing given data and that $\XX \cong \X$ after choosing a basis of $\XX$.
	A notable point here is that there are many choices for isomorphisms $\XX \cong \X$ but no canonical one among them. Rather, the space $\XX$ represents all the bases of $\X$ simultaneously.
	Elements $\phi \in \XX$ are called \textit{gauge invariant} or \textit{basis independent representatives} of samples $\x \in \X$. We call mappings $\mc{F} : \XX \to \XX$ \textit{gauge invariant transformations}. They represent \textit{basis independent transformations} of the space $\XX$.

	The following assumption is adopted throughout.
	\begin{assumption}[gauge invariance]
		Data $\x \in \X$ must be represented in terms of gauge invariant representatives $\phi \in \XX$ and are transformed by gauge invariant mappings $\mc{F} : \XX \to \XX$.
	\end{assumption}
	The concept of gauge invariance and its role in data processing has been motivated and discussed in the literature \cite{batardHeatEquationsVector2011,weilerCoordinateIndependentConvolutional2021,gerkenGeometricDeepLearning2021}.
	We point out that our analysis of gauge invariance is not aimed at facilitating numerical computations. Rather, we are interested in the \textit{consistency conditions} resulting from assumptions about gauge invariance and their significance in the context of data science.

	Abstract vector spaces $\XX$ are not directly amenable for practical applications, since no representation suggests itself that is suitable for numerical computation. However, since the basis independence assumption stipulates that the abstract space $\XX$ is the correct domain for analyzing data, we are forced to find a way of working with $\XX$.
	
	A solution is provided by \textit{gauge theory} known from differential geometry and theoretical physics \cite{hamiltonMathematicalGaugeTheory2017}, due to the observation that the space of all possible choices of bases $\X$ is given by the general linear group $\GL(X)$. Gauge theory considers a \textit{concrete model} for the space $\XX$ of the form
	\begin{equation}\label{eq:smashed-prod}
		\begin{aligned}
			\XX
			&= (\GL(X) \times X) / \GL(X) \\
			&= \{ [g,\x ] \subset \GL(X) \times \X \mid [gh,\x ] = [g,hx] ,\; h \in \GL(X) \}.
		\end{aligned}
	\end{equation}
	We note here that the group $\GL(\X)$ acts naturally on $\X$, since $\GL(X)$ consists of linear isomorphisms of $X$. This action can be understood more concretely if we identify $X=\R^{N\times d}$ with $\R^{Nd}$ via the row stacking isomorphism $\o{vec}$. The isomorphism $\o{vec}$ also identifies $\GL(X)$ with $\GL_{Nd}(\R)$, which we denote by $g \in \GL(\X) \mapsto \tilde{g} \in \GL_{Nd}(\R)$. The action of $\GL(\X)$ on $\X$, denoted by $\GL(\X) \curvearrowright \X$, is then given by
	\begin{equation}\label{eq:action-glx}
		\GL(\X) \times \X \to \X, \quad (g,x) \mapsto gx \coloneqq \o{vec}^{-1}(\tilde{g}\o{vec}(x)).
	\end{equation}

	The set $\XX$ forms a vector space \textit{isomorphic} but \textit{not equal} to $\X$, with respect to the operations
	\begin{equation}\label{eq:Gamma-operations}
		[g,x] + [g,x'] = [g,x + x'], \quad
		r [g,x] = [g,rx], \quad \forall [g,x],[g,x'] \in \XX ,\; r \in \R.
	\end{equation}
	Modelling data in terms of the space $\Gamma$ imposes the constraint that vectors $\x \in \X$ must come along with an explicit choice of basis $g \in \GL(\X)$ in order to be well-defined. The space $\XX$ is canonically isomorphic to $X$ in terms of the isomorphism
	\begin{equation}
		\XX \to X, \quad [g,x] \mapsto gx. 
	\end{equation}
	\begin{remark}[$\XX$ versus $\X$]
		Despite the existence of this canonical isomorphism $\Gamma \cong \X$, we do not identify the two spaces. The goal of gauge invariant data modelling entails to keep track of \textit{how} to identify $\XX$ and $\X$. In particular, \textit{two distinct roles} of $\GL(X)$  in $\Gamma = (\GL(\X) \times X) / \GL(X)$ are considered; it appears as a group and as a \textit{principal homogeneous space}. 
	\end{remark}
	Additionally, we would like to point out that $\Gamma$ represents a special case of a more general modelling scheme; similar constructions can be made when replacing the group action $\GL(X) \curvearrowright X$, see also Remark \ref{rem:general-groups}. 

	The concrete model \eqref{eq:smashed-prod} of $\XX$ now allows to describe gauge invariant representatives $\phi \in \XX$ explicitly.
	The elements $[g,\x ] \in \XX$ are equivalence classes of bases and vectors, identified by the relation $[gh,h^{-1}\x ] = [g,\x ]$, which means that if we transform the basis $g$ along with the vector $\x$, then we stay in the same equivalence class.
	A data sample $\x$ can be promoted to a gauge invariant representation by choosing a basis $g \in \GL(X)$ and setting
	\begin{equation}
		\phi \coloneqq [g,\x].
	\end{equation}

	By means of the concrete model \eqref{eq:smashed-prod} of $\XX$, we can give a complete characterization of transformations $\mc{F} : \XX \to \XX$.
	\begin{proposition}[gauge invariant transformations]\label{prop:g-inv-trafos}
		The gauge invariant transformations $\mc{F} : \XX \to \XX$ correspond one-to-one to the functions
		\begin{equation}
			F : \GL(X) \times \X \to \X
		\end{equation}
		with equivariance property
		\begin{equation}\label{eq:consis}
			F(gh^{-1},hx) = h F(g,\x ), \quad \forall h \in \GL(\X).
		\end{equation}
	\end{proposition}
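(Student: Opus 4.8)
The plan is to build the correspondence explicitly, the crucial observation being that fixing a basis turns the abstract space $\Gamma$ into a faithful copy of $X$. Concretely, I would first record the following elementary fact: for each fixed $g \in \GL(X)$, the map $X \to \Gamma$, $y \mapsto [g,y]$, is a bijection. Surjectivity holds because any $[g',x']$ equals $[g,(g^{-1}g')x']$ by the defining relation $[gh,x]=[g,hx]$; injectivity holds because the equivalence $(g,y)\sim(gh,h^{-1}y)$ leaves the first coordinate unchanged only when $h$ is the identity, so $[g,y]=[g,y']$ forces $y=y'$. This ``fix a frame and read off the vector'' step is the discrete analogue of choosing a local trivialization, and it is the conceptual heart of the argument.

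Given this bijection, for a transformation $\mathcal{F}:\Gamma\to\Gamma$ I would define $F:\GL(X)\times X\to X$ by declaring $F(g,x)$ to be the unique vector with $\mathcal{F}([g,x])=[g,F(g,x)]$, which is well-defined by the fixed-frame bijection. To verify $F(gh^{-1},hx)=hF(g,x)$, I would use $[gh^{-1},hx]=[g,x]$ (the relation read with $h^{-1}$ in place of $h$), so that $\mathcal{F}([gh^{-1},hx])=\mathcal{F}([g,x])=[g,F(g,x)]=[gh^{-1},hF(g,x)]$, the last step again by the defining relation applied to the frame $gh^{-1}$. Comparing this with $\mathcal{F}([gh^{-1},hx])=[gh^{-1},F(gh^{-1},hx)]$ and invoking uniqueness for the fixed frame $gh^{-1}$ gives the claim.

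Conversely, given $F$ with the stated equivariance, I would set $\mathcal{F}([g,x]):=[g,F(g,x)]$ and check independence of the representative. If $[g,x]=[gh,h^{-1}x]$, then the equivariance property with $h$ replaced by $h^{-1}$ reads $F(gh,h^{-1}x)=h^{-1}F(g,x)$, whence $[gh,F(gh,h^{-1}x)]=[gh,h^{-1}F(g,x)]=[g,F(g,x)]$, so both representatives yield the same image and $\mathcal{F}$ is well-defined. Finally, the two assignments are visibly mutually inverse: composing them recovers $\mathcal{F}$ on each class and $F$ at each pair by the uniqueness property. I expect no genuine obstacle here; the only point requiring care is keeping the two index conventions $[gh,x]=[g,hx]$ and $[gh^{-1},hx]=[g,x]$ straight so that the group exponents match, and recognizing that well-definedness of $\mathcal{F}$ and the equivariance of $F$ are literally the same identity read in the two directions.
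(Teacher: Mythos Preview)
Your proof is correct and follows essentially the same approach as the paper: your ``fixed-frame bijection'' $y\mapsto[g,y]$ is precisely the inverse of the paper's isomorphism $\Sigma_g:[h,x]\mapsto g^{-1}hx$, and both arguments then define $F(g,x)$ via $\mathcal{F}([g,x])=[g,F(g,x)]$, verify the equivariance/well-definedness identity in each direction, and note that the two assignments are mutually inverse.
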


	\begin{proof}
		\renewcommand\pluto\Sigma
		We denote in this proof by $\pluto_{g}$ the isomorphism $\Gamma \to X, [h,x] \mapsto g^{-1}hx$, for any fixed $g \in \GL(\X)$. This implies $\Sigma_g([g,x]) = x$ and $\Sigma_{gh^{-1}} = h\Sigma_{g}$, since
		\begin{equation}\label{eq:sigma-gh}
			\Sigma_{gh^{-1}}([h',x]) = h g^{-1} h' x = h \Sigma_{g}([h',x])  
		\end{equation}
		for all $g,h \in \GL(X)$ and $[h',x] \in \XX$.

		Let $\mc{F}$ be a given gauge invariant transformation $\Gamma \to \Gamma$. Then define $F$ by
		\begin{equation}
			F : \GL(\X) \times \X \to \X; \quad F(g,x) = \pluto_g(\mc{F}([g,\x])).
		\end{equation}
		$F$ defined by this rule satisfies
		\begin{subequations}
		\begin{align}
			F(gh^{-1},hx)
			&= \pluto_{gh^{-1}}(\mc{F}[gh^{-1},hx]) \\ 
			\overset{\eqref{eq:smashed-prod}}&{=} \pluto_{gh^{-1}}(\mc{F}([g,x])) \\ 
			\overset{\eqref{eq:sigma-gh}}&{=} h \pluto_{g}(\mc{F}([g,x])) \\ 
			&= h F(g,x)
		\end{align}
		\end{subequations}
		This implies relation \eqref{eq:consis}.

		As for the converse direction, assume that $F : \GL(\X) \to \X$ satisfying \eqref{eq:consis} is given. Then define
		\begin{equation}
			\mc{F} : \Gamma \to \Gamma, \quad [x,g] \mapsto [g,F(g,x)].
		\end{equation}
		We establish that $\mc{F}$ is well-defined,
		\begin{subequations}
		\begin{align}
			\mc{F}([gh,h^{-1}x]) &=
			[gh,F(gh,h^{-1}x)] \overset{\eqref{eq:consis}}{=}
			[gh,h^{-1}F(g,x)] \overset{\eqref{eq:smashed-prod}}{=}
			[g,F(g,x)] \\ &= \mc{F}([g,x]).
		\end{align}
		\end{subequations}

		These two constructions are clearly inverse to each other.
	\end{proof}
	
	\begin{corollary}[characterization of gauge invariant transformations]\label{cor:ginv-trafo}
		The set of gauge invariant transformations $\mc{F} : \Gamma \to \Gamma$ corresponds one-to-one to the space of mappings $\X \to \X/\GL(X) $, where $\X / \GL(X) \coloneqq \big\{[x] \in \X \mid [x] = [gx] ,\; \forall g \in \GL(X)\big\}$ is the space of orbits of the group action $\GL(\X) \curvearrowright \X$.
	\end{corollary}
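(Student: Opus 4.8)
The plan is to reduce everything to Proposition \ref{prop:g-inv-trafos} and then push the associated function $F$ down to the orbit space along the canonical identifications. Throughout, write $\pi : X \to X/\GL(X)$ for the orbit projection and recall the canonical isomorphism $\Gamma \xrightarrow{\sim} X$, $[g,x]\mapsto gx$, so that a pair $(g,x)$ represents the element $y=gx$ of $X\cong\Gamma$.

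For the forward map, given a gauge invariant transformation $\mc F$, Proposition \ref{prop:g-inv-trafos} supplies the equivariant $F:\GL(X)\times X \to X$ obeying \eqref{eq:consis}, and I would define $\psi_{\mc F}:X\to X/\GL(X)$ by $\psi_{\mc F}(y):=\pi\big(F(g,x)\big)$ for any $(g,x)$ with $gx=y$. The single computation that carries the argument is well-definedness: if $gx=g'x'$ then $(g',x')=(gh^{-1},hx)$ with $h=g'^{-1}g$, and \eqref{eq:consis} gives $F(g',x')=hF(g,x)$, whence $\pi(F(g',x'))=\pi(F(g,x))$. Since every $y\in X$ is of the form $gx$, this defines $\psi_{\mc F}$ on all of $X$. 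Conceptually, \eqref{eq:consis} says precisely that $F$ is $\GL(X)$-equivariant, so it descends along the two quotient maps $\GL(X)\times X\to\Gamma\cong X$ and $X\to X/\GL(X)$ to the map $\psi_{\mc F}$.

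For the converse, given $\psi:X\to X/\GL(X)$ I would fix a section $s:X/\GL(X)\to X$ of $\pi$ and set $F(g,x):=g^{-1}\,s\big(\psi(gx)\big)$. This satisfies \eqref{eq:consis} because $gh^{-1}\cdot hx=gx$, so by Proposition \ref{prop:g-inv-trafos} it defines a gauge invariant transformation, and $\psi_{\mc F}=\psi$ since $\pi\circ s=\mathrm{id}$. This already yields surjectivity of $\mc F\mapsto\psi_{\mc F}$.

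The step I expect to be the main obstacle is injectivity, i.e.\ recovering $\mc F$ from $\psi_{\mc F}$, since passing to $\pi(F(g,x))$ discards the orbit representative of the output. The natural resolution stays faithful to the basis independence assumption: an output is only meaningful as a gauge class, so two transformations whose outputs lie in a common $\GL(X)$-orbit for every input must be identified. I would therefore make the correspondence bijective by recording outputs in $X/\GL(X)$ (equivalently, by taking gauge invariant transformations modulo pointwise post-composition with $\GL(X)$); verifying that the two constructions above are then mutually inverse is the delicate bookkeeping, with the equivariance identity \eqref{eq:consis} serving as the engine throughout.
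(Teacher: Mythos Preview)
Your approach matches the paper's: reduce to equivariant $F:\GL(X)\times X\to X$ via Proposition \ref{prop:g-inv-trafos}, then descend to orbit spaces using the identification $(\GL(X)\times X)/\GL(X)\cong X$. The paper does exactly this in two sentences, simply asserting that ``equivariant functions are fully characterized by their induced functions on the corresponding orbit spaces.''

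Where you go beyond the paper is in flagging the injectivity issue, and you are right to do so. The passage $F\mapsto \psi_{\mc F}:X\to X/\GL(X)$ is surjective (your section construction shows this) but not injective: different choices of section $s$ in your inverse construction yield genuinely different equivariant $F$'s, hence different $\mc F$'s via Proposition \ref{prop:g-inv-trafos}, all inducing the same $\psi$. The paper's proof does not address this, and read literally the ``one-to-one'' claim is too strong without your proposed identification of outputs modulo the $\GL(X)$-orbit. That identification is precisely how the authors use the corollary in the very next paragraph---treating gauge invariant transformations as effective maps $X\to\{0,1\}$---so your fix is both correct and faithful to the paper's intent.
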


	\begin{proof}
		The gauge invariant transformations $\mc{F}$ correspond one-to-one to equivariant functions $F : \GL(\X) \times \X \to \X$, due to Proposition \ref{prop:g-inv-trafos}. Equivariant functions are fully characterized by their induced functions $(\GL(\X) \times \X) / \GL(\X) \to \X / \GL(X)$ on the corresponding orbit spaces.

		Consequently, since the space $\Gamma = (\GL(\X) \times \X) / \GL(\X)$ is isomorphic to $\X$ as argued in connection with \eqref{eq:Gamma-operations}, a bijection exists between the spaces of gauge invariant transformations $\Gamma \to \Gamma$ and functions $\X \to \X / \GL(\X)$.
	\end{proof}

	Corollary \ref{cor:ginv-trafo} says that $\GL(\X)$-gauge invariant transformations $\mc{F} : \Gamma \to \Gamma$ correspond to functions $f : \X \to \X / \GL(\X)$. The space of orbits  $\X / \GL(\X)$ contains only two elements, however: the orbit of 0 and the orbit of nonzero elements. Consequently,  the transformations $\mc{F}$ can be understood as binary classification functions $f : X \to \{0,1\}$.

	This reveals that $\GL(\X)$-gauge invariance imposes conditions on the space of admissible feature transformations which restrict the applicability of $\GL(\X)$-gauge invariant modelling to binary classification tasks.

	\begin{remark}[subgroup-gauge invariance]\label{rem:sub-ginv}
		We can achieve broader applicability of the basis independence assumption by considering gauge invariance under suitable \textit{subgroups} $\Xi \subset \GL(\X)$. Subgroup-gauge invariance changes the structure of the gauge-invariant feature vector representation, see \eqref{eq:def-Gamma}. Yet, the statements of Proposition \ref{prop:g-inv-trafos} and Corollary \ref{cor:ginv-trafo} remain valid after replacing $\GL(\X)$ by $\Xi$.
		
		For instance, if we consider the subgroup $\Xi = \SO(X) \subset \GL(X)$, we can still construct the space $\Gamma = (\Xi \times X) / \Xi$, but the mappings $\mc{F} : \Gamma \to \Gamma$ are now parametrized by the orbit space $X / \Xi = \R_{\geq 0}$, which projects to $X$ via $x \mapsto \|x\|$.
	\end{remark}

	The following Definition \eqref{def:product-metric-structures} and Assumption \ref{as:sod-ginv} below prepare our choice of the subgroup $\Xi = \SO(d)^N$ -- cf.~\eqref{eq:choice-Xi} -- for the remainder of this paper.

\subsection{Discrete Vector Bundles}
	So far, we considered the feature space $\X$ as a vector space. We point out additional structures carried by $\X$.
	\begin{enumerate}
		\item $\X = \R^{N \times d} = (\R^d)^N$ is a product of $N$ copies of $\R^d$.

		\item $\X$ is an inner product space through
			\begin{equation}
				\lla x,x' \rra = \o{Tr}(x^{\top}x'),
			\end{equation}
			compatible with the product structure, that is $\lla \cdot,\cdot \rra$ induces an inner product on the factors of $\X$.
	\end{enumerate}
	Now we define gauge invariant feature spaces that respect these structures of $\X$.

	\begin{definition}[discrete vector bundle]\label{def:product-metric-structures}
		A \textit{discrete vector bundle} is a vector space $\Gamma$ with a decomposition
		\begin{equation}
			\Gamma = \bigtimes_{i=1}^N \Gamma_0
		\end{equation}
		where $N \in \mathbb{N}$ and $\Gamma_0$ is another vector space. For an element $\phi \in \Gamma$, we write $\phi_i \in \Gamma_0$ for the projection to the $i$-th factor of $\phi$. 

		A \textit{compatible metric} on $\Gamma$ is a scalar product $\langle \langle \cdot, \cdot \rangle\rangle$ that respects the product decomposition. This means that we can write
		\begin{equation}
			\langle \langle \phi,\phi' \rangle \rangle
			= \sum_{i=1}^N \langle \phi_i , \phi_i' \rangle,
		\end{equation}
		where $\langle \cdot , \cdot \rangle$ is a scalar product on $\Gamma_{0}$.

		A \textit{compatible isometry} between two discrete vector bundles $\Gamma,\Gamma'$ and compatible metrics is a vector space isometry $\Gamma \cong \Gamma'$ that induces an isometry on each factor.
	\end{definition}
	
	Recall that $X = (\R^d)^N$ is itself a discrete vector bundle with compatible metric, with factors $\R^d$. These notions lead to a modified version of the gauge invariant modelling assumption, if we impose gauge invariance only for the subgroup $\Xi \subset \GL(\X)$ of automorphisms of $X$ compatible with the discrete vector bundle and metric structure of $\X$.
	\begin{assumption}[$\Xi$-gauge invariance]\label{as:sod-ginv}
		We postulate that there exists an abstract vector space $\Gamma$ which admits a discrete vector bundle structure and a compatible metric, and is isomorphic to $\X$ via a compatible isometry. Data $\x \in \X$ must be represented in terms of gauge invariant representatives $\phi \in \Gamma$ and transformed by transformations $\mc{F} : \Gamma \to \Gamma$, as characterized by Proposition \ref{prop:g-inv-trafos}.
	\end{assumption}
	This modified assumption expresses that data can be modeled in a basis invariant way, via an abstract vector space $\Gamma$ that respects the product decomposition of $\X$ and its metric structure. In view of Remark \ref{rem:sub-ginv}, transformations $\mc{F} : \Gamma \to \Gamma$ are called \textit{$\Xi$-gauge invariant transformations}.

	Similar to Equation \eqref{eq:smashed-prod}, we specify a concrete model for $\Gamma$ by noticing that the group of factor preserving isometries of $\X$ is given by
	\begin{equation}\label{eq:choice-Xi}
		\Xi \coloneqq \SO(d)^N \subset \GL(\X),
	\end{equation}
	which acts on $\X = (\R^d)^N$ by applying factorwise rotations. The group $\Xi$ can be embedded into $\GL(\X)$, after identification with $\GL_{Nd}(\R)$, via block matrices with $d \times d$ rotation matrices on the main diagonal and matrices 0 elsewhere. $\Xi$ represents the orthonormal bases of $\X$ that respect the product structure, and there is a group action $\Xi \curvearrowright \X$ analogous to \eqref{eq:action-glx}.

	\begin{remark}[local symmetry properties]
		This property of gauge invariant transformations is often interpreted as a \textit{equivariance under local symmetry transformations} \cite{batardClassGeneralizedLaplacians2014,hamiltonMathematicalGaugeTheory2017,weilerCoordinateIndependentConvolutional2021,wipfSymmetrienPhysikGruppen2023,gerkenGeometricDeepLearning2021}.
		`Local' here refers to the fact that the group $\Xi = \SO(d)^{N}$ acts with rotation matrices $\xi_{i} \in \SO(d)$ \textit{independently} at each graph node $i \in \nu$. In other words, a group element $\xi \in \Xi$ `localizes' at each node as $\xi_i$, such that the overall decomposition of $\X$ into factors is preserved. See Figure \ref{fig:rotation} for an example and a visualization of such a group action.

		In the context of theoretical physics, this observation leads to \textit{gauge theory} as an extension of Maxwell's theory of electrodynamics \cite{peskinIntroductionQuantumField1995a,Garrity:2015aa,hamiltonMathematicalGaugeTheory2017}.

		On the other hand, these symmetry properties were also central parts of the works \cite{batardClassGeneralizedLaplacians2014,weilerCoordinateIndependentConvolutional2021,gerkenGeometricDeepLearning2021} where they served as a motivation to introduce gauge invariance and equivariance in data science. In particular, it was recognized that local symmetries are important when we consider transformation properties of data sets, and it was demonstrated that local symmetry-aware modelling architectures can improve performance in various applications.
	\end{remark}

	\begin{figure}[t]
		\begin{minipage}{0.4\textwidth}
			\includegraphics[width=\textwidth]{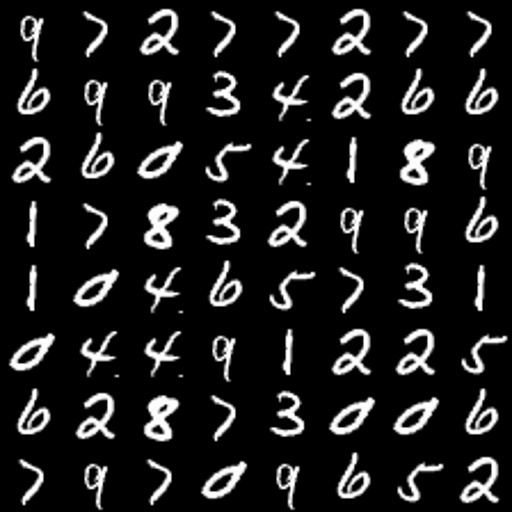}
		\end{minipage}
		\hspace{1cm}
		\begin{minipage}{0.4\textwidth}
			\includegraphics[width=\textwidth]{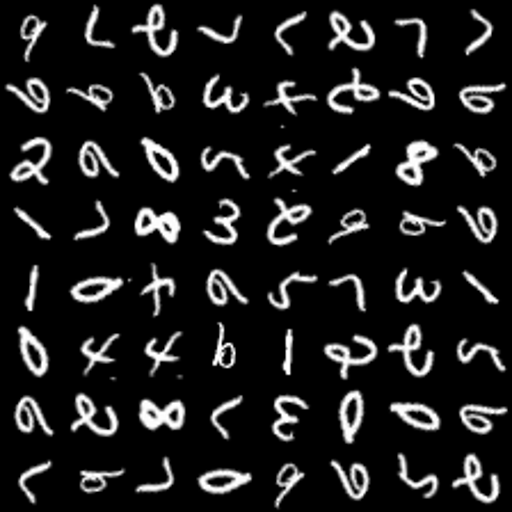}
		\end{minipage}
		\caption{
			Local rotations applied to an array of MNIST digits. The relevant graph here is a 8 $\times$ 8 grid graph with node feature vectors in $\R^{784}$ (visualized by MNIST images). Thus, the relevant feature space is given by $X = \R^{64 \times 784}$ and the local rotation is performed by acting with the group $\SO(784)^{64}$, or more precisely, via a $\U(1)^{64}$ subgroup of $\SO(784)^{64}$.
		}
		\label{fig:rotation}
	\end{figure}

	\begin{definition}[discrete bundle sections\yyy]\label{def:discrete-bundles}
		We define the space of \textit{discrete bundle sections} as
		\begin{equation}\label{eq:def-Gamma}
			\begin{aligned}
				\Gamma
				&= (\Xi \times X) / \Xi \\
				&= \{ [\gt,\x ] \in \Xi \times \X \mid [\gt \gtt,\x ] = [\gt,\gtt \x ] ,\; \forall \gtt \in \Xi\}.
			\end{aligned}
		\end{equation}
	\end{definition}

	\begin{remark}[connection to the fiber bundle formalism]
		The above definition of the abstract vector space $\Gamma$ connects to the general framework of \textit{associated vector bundles} (cf., e.g., \cite[pp.~87-88]{Jost:2017aa}).
		Given a principal $G$-bundle $P\to M$ with smooth manifolds $P$ and $M$ and structure group $G$, and a vector space $V$ on which $G$ acts linearly from the left, the \textit{associated vector bundle} $E\to M$ with fiber $V$ is constructed as follows. One has a free action of $G$ on $P\times V$ from the right,
		\begin{equation}
			P\times V\times G \to P\times V,\qquad
			(p,v)\cdot g = (p\cdot g, g^{-1} v).
		\end{equation}
		Dividing out the $G$-action by identifying $(p,v)$ and $(p,v)\cdot g$, the fibers of the bundle $(P\times V)/G \to M$ become vector spaces isomorphic to $V$, and
		\begin{equation}
			E \coloneqq (P\times V)/G \to M
		\end{equation}
		is the associated vector bundle with fiber $(G\times V)/G = V$.

		The relationship between the discrete bundle sections $\Gamma$ and the continuous framework is explained in Appendix \ref{sec:app}, where we show that $\Gamma$ can be thought of as a discretization of the space of sections $\Gamma(E)$.
		We point out that we do not require the graph structure so far. The constructed objects only rely on the finite set $\nu$. In the next section, however, the graph structure will be necessary in order to define graph voltages and Laplacians.
	\end{remark}

	We can write $\Gamma$ also in the form
	\begin{equation}\label{eq:gamma}
		\Gamma = \big((\SO(d) \times \R^{d}) / \SO(d)\big)^N
	\end{equation}
	which clarifies that $\Gamma$ has indeed a discrete vector bundle structure. A compatible metric can be defined as
	\begin{equation}\label{eq:def-norm-gamma}
		\lla \phi , \phi ' \rra =
		\sum_{i=1}^N \la \gt_i \x_i, \gtt_i \x '_i \ra
		\quad \text{where} \quad
		\phi = [\gt,\x ],\; \phi' = [\gtt,\x '].
	\end{equation}
	Importantly, this choice is well-defined and does not depend of the representatives $\x ,\x '$.

	By virtue of Proposition \ref{prop:g-inv-trafos}, Corollary \ref{cor:ginv-trafo}, the choice \eqref{eq:choice-Xi} and the product-compatible form \eqref{eq:gamma}, we can completely characterize $\Xi$-gauge invariant mappings $\mc{F}:\Gamma \to \Gamma$ as functions $f : \X \to \X / \Xi$. In contrast to the $\GL(\X)$ case, the orbit space $\X / \Xi$ now is bijective to $\R_{\geq 0}^{N}$. This means that the class of $\Xi$-gauge invariant transformations $\mc{F}$ effectively represents the class of functions $f : \X \to \R_{\geq 0}^{N}$.
	
	Thus, by restriction to the subgroup $\Xi \subset \GL(\X)$, we significantly increase the class of transformations that are representable in a gauge invariant manner, from $\X \to \{0,1\}$ to $\X \to \R_{\geq 0}^{N}$.
	
	\begin{remark}[general group representations]\label{rem:general-groups}
		We present here only a snapshot of the theory of discrete associated vector bundles, where one would consider general subgroups $\Xi \subset \GL(\X)$  induced by group representations 
		\begin{equation}\label{eq:rho-general}
		\rho: G \to \GL_{d}(\R).
		\end{equation}
		Such a representation induces an abstract vector space
		\begin{equation}
			\Gamma = (G^{N} \times X) / G^{N},
		\end{equation}
		which also admits a product structure compatible with the product structure of $\X$.

		We focus in this work on the special group representation $\SO(d) \to \GL_{d}(\R)$, since we aim for a simplified exposition in terms of more concrete arguments. Generally, the smaller the subgroup $G$, the larger is the space of data transformations compatible with gauge invariance. For a more general treatment, we refer to \cite{hamiltonMathematicalGaugeTheory2017,weilerCoordinateIndependentConvolutional2021,gerkenGeometricDeepLearning2021}.

		The theory that we develop in the remainder of this paper, however, holds more generally for compact, connected Lie groups $G$ with $d$-dimensional orthogonal representations.
	\end{remark}

	In the next section, we define \textit{heat kernel methods} compatible with the notion of gauge invariance. This will allow us to connect the gauge invariant formalism to the theory of discrete connections \cite{leokDiscreteTheoryConnections2005,Gattringer:2010zz} and associated Laplacians \cite{Singer:2012aa}.

\section{Gauge Invariant Heat Kernel Operators for Data Science}\label{sec:hks}

We define in this section heat kernel methods for data transformation based on the gauge invariant modelling method presented in the preceding section. The general literature on heat kernels is vast. We refer, e.g., to \cite{Davies:1989aa,Berline:1996aa,Grigoryan:2009aa,Avramidi:2015aa} for general expositions.

\subsection{Heat Kernel Approaches to Data Transformation}
	We first collect few basic concepts about generalized heat kernels and give examples where such transformations appear in data science.
	In this subsection, we assume that data samples to be transformed are elements of a Hilbert space $(V,\la \cdot, \cdot \ra)$, for which the node feature space $\X$ is an example.

	\begin{definition}[generalized heat kernel]
		A \textit{generalized heat kernel} $K^t : V \to V$ is a one-parameter family of continuous linear operators, indexed by $t \in \R_+$, with the following properties.
		\begin{enumerate}
			\item There exists a continuous operator $\hgen : V \to V$ such $K^t = e^{-t \hgen}$ holds. $\hgen$ is called the \textit{generator}.

			\item The generator $\hgen$ is positive semi-definite and self-adjoint.
		\end{enumerate}
	\end{definition}
	The superscript $t$ on $K$ is suppressed in the following.

	The importance of such heat kernel transformations for data science is evident.
	An example is provided by the graph heat kernel $K_{\eta,\kappa}$ associated to a weighted graph $(\eta,\kappa)$ and acting on the space of node features $\X$ over the graph \cite{Grigoryan:2011aa}. The generator of $K_{\eta,\kappa}$ is the weighted graph Laplacian matrix $\Delta_{\eta,\kappa}$. The graph heat kernel is of great interest for node labelling tasks on graphs, such as spectral clustering methods \cite{Chung:1997aa,Luxburg:2007aa} and graph neural networks \cite{kipfSemiSupervisedClassificationGraph2017a,bodnarNeuralSheafDiffusion2023,bambergerBundleNeuralNetworks2024}.

\subsection{Gauged Heat Kernel (GHK)}
	We now introduce heat kernel operators $K_\chi$ derived from discrete gauge theory \cite{Gattringer:2010zz,jiangGaugeTheoryGraphs2023}. An important property of these operators is that they are consistent with $\Xi$-gauge symmetry while acting on the abstract space $\Gamma$.
	In order to define these heat kernels we need to equip $\eta$ with more structure which is required to generate the relevant operators.

	\subsubsection{Graph Voltages}
		Recall that we assume a graph $\eta$ to be given with vertex set $\nu = \{1,\ldots,N\}$ and directed edge relation $<$.

		\begin{definition}[graph voltages \cite{grossTopologicalGraphTheory1987}]
			A \textit{$\SO(d)$ graph voltage $\a$} assigns to each positively oriented edge $i > j$ of $\eta$ an element $\a_{ij} \in \SO(d)$ and the inverse matrices $\a_{ij}^{-1}$ to negatively oriented edges $i < j$. This results in the function
			\begin{equation}
				\a : \nu \times \nu \to \SO(d), \qquad
				\begin{dcases}
					\a_{ij} = \II, &\text{if $i \not \sim j$,} \\
					\a_{ij} = \a_{ji}^{-1}, \quad &\text{else}.
				\end{dcases}
			\end{equation}
			We denote the space of graph voltages by $\mc{A}$. We denote by $\mbf{1}_{\mc{A}}$ the voltage assigning the identity matrix to all edges.
		\end{definition}

		We define additionally a group action of $\Xi$ on the space of voltages in order to define gauge invariant voltages.
		\begin{definition}[gauge invariant voltages]
			Given a voltage $\a \in \mc{A}$ and a gauge transformation $\gt \in \Xi$, we can define a new voltage $\gt \a \in \mc{A}$ defined by
			\begin{equation}\label{def-action-Xi-A}
				(\xi\alpha)_{ij}=\; \gt_i \a_{ij} \gt_j^{-1}, \qquad \forall i,j \in \nu.
			\end{equation}
			This defines a group action of $\Xi$ on $\mc{A}$ and allows us to define the space of \textit{gauge invariant voltage representations}
			\begin{equation}
				\Omega \coloneqq (\Xi \times \mc{A}) / \Xi = \{[\gt,\a] \subset \Xi \times \mc{A} \mid [\gt \gtt ,\a] = [\gt,\gtt \a]\}
			\end{equation}
			A triple
			\begin{equation}
				\chi = (\eta,\kappa,\omega)
			\end{equation}
			consisting of a weighted graph $(\eta,\kappa)$, with a positive weight function $\kappa$ supported on the edges of $\eta$, and a gauge invariant voltage $\omega\in\Omega$, is called a \textit{gauge invariant voltage graph}.
		\end{definition}

		\begin{remark}[graph voltages in related work]\label{rem:graph-voltage-related}
			Graph voltages naturally appeared in different areas of mathematics and physics. While our exposition is inspired from algebraic graph theory \cite{grossTopologicalGraphTheory1987}, we list here few other occurrences.

			One perspective on graph voltages is given by \textit{cellular sheaves} \cite{hansenSpectralTheoryCellular2019a,hansenOpinionDynamicsDiscourse2021,bodnarNeuralSheafDiffusion2023}, where one encodes sheaf structures over graphs in terms edge matrices, similar to voltages. This approach favors \textit{flat voltages} as specified in Definition \ref{def:ym-flat}.

			Graph voltages also can be obtained from \textit{discrete gauge theory} \cite{baezSpinNetworkStates1996,Gattringer:2010zz,jiangGaugeTheoryGraphs2023}, where similar edge matrices on graphs arise from \textit{gauge fields on a manifold} after discretization \cite{leokDiscreteTheoryConnections2005}. This connection is further explained in Appendix \ref{sec:app}.

			Furthermore, voltages naturally arise as a modelling scheme for \textit{synchronization problems} \cite{gaoGeometrySynchronizationProblems2019}, which are encountered for instance in cryo electron microscopy \cite{singerViewingAngleClassification2011,shkolnisky2012viewing}.
		\end{remark}

		In the sequel, we will assume that we are given a gauge invariant graph voltage $\omega$, as a basis for defining gauge invariant heat kernels.

	\subsubsection{Gauged Heat Kernel}
		We are now in the position to define gauge invariant heat kernels $K_{\vg}$. Consider $\Gamma$, c.f.~\eqref{eq:def-Gamma}, as a inner product space with product
		\begin{equation}\label{eq:def-bracket-z}
			\lla \phi,\phi' \rra_{Z} = \sum_{i \in \nu} Z_i \la \x_i, x'_i \ra
			\quad \text{where} \quad
			\phi = [\gt,\x ],\; \phi' = [\gt,x'].
		\end{equation}
		This definition is well-posed and does not depend on the choice of $\gt$. Here, $Z_i$ denotes here the weight degree of vertex $i$
		\begin{equation}\label{eq:def-zi}
			Z_{i} = \sum_{j \in \nu} \kappa_{i j},
		\end{equation}
		used below for normalization.

		We next define some basic linear operators.

		\begin{definition}[gauged heat kernel and Laplacian \yyy]\label{def:ginv-k}
			Let $\chi = (\eta,\kappa,\omega)$ denote a gauge invariant voltage graph. We define the \textit{gauged Laplacian}, denoted by  $\hgen_{\vg}$, as the linear operator
			\begin{equation}\label{eq:def-gauged-Laplacian}
				\hgen_{\vg} : \Gamma \to \Gamma; \quad \phi = [\gt,\x ] \mapsto [\gt,\Lpl_{\eta,\kappa,\alpha}x], \qquad \omega = [\gt,\a],
			\end{equation}
			with
			\begin{equation}\label{eq:def-lalpha}
				(\Lpl_{\eta,\kappa,\alpha}x)_i = -\sum_{j \in \nu} \frac{\kappa_{ij}}{Z_i}(\a_{ij} \x_j - \x_i) \in \R^d, \qquad i \in \nu.
			\end{equation}
			Define the \textit{gauged heat kernel} associated to $\chi $ as
			\begin{equation}\label{eq:def-gauged-hk}
				K_{\vg}^t = e^{- t \hgen_{\vg}} : \Gamma \to \Gamma, \qquad \chi =  (\eta,\kappa,\omega).
			\end{equation}
		\end{definition}
		The notation $\phi = [\xi,x]$, $\omega = [\xi,\alpha]$ used in the definition means that both, the section $\phi$ and the voltage $\omega$ must be represented in terms of a \textit{matching} gauge $\xi$, in order for equation \eqref{eq:def-gauged-Laplacian} to hold.
		
		Despite the summation over all graph nodes $\nu$, $\hgen_{\vg}$ is a \textit{local} operator in the sense that, for a given vertex $i$, the sum only visits the first-order neighbors $j\sim i$, as $\kappa$ is only supported on adjacent pairs of vertices.

		\begin{remark}[relation to connection and sheaf Laplacians]\label{rem:laplacian-relations}
			The gauge invariant Laplacian, or rather its representation $\Delta_{\eta,\kappa,\alpha}$, are known concepts in the context of gauge theory and data science.

			From the point of view of gauge theory, this Laplacian is derived from the Lagrangian of the scalar field after the minimal coupling procedure, as discretized differential operator which governs the kinetic part of the field \cite{hamiltonMathematicalGaugeTheory2017}. This connection is explained in \cite[Section 4]{tong2018gauge} and \cite{rotheLatticeGaugeTheories2012}.

			Another way to formulate this relationship is to identify the gauge theoretic Laplacian as a \textit{generalized/twisted Laplacian} on a vector bundle \cite{Berline:1996aa} and treat the operator $L_{\chi}$ as its discretization. This perspective is adopted in \cite{gaoGeometrySynchronizationProblems2019}.

			Furthermore, this Laplacian was introduced as the \textit{discrete connection Laplacian} in the context of \textit{vector diffusion maps} \cite{Singer:2012aa}.

			Yet another perspective is gained when looking at the problem through the lens of \textit{cellular sheaf theory}, where similar operators are constructed to capture $H^{0}$-spaces of sheaf cohomology \cite{hansenSpectralTheoryCellular2019a,bodnarNeuralSheafDiffusion2023}.
		\end{remark}

		\begin{proposition}
			The operators $\hgen_{\vg}$ and $K_{\vg}$ are well-defined.
		\end{proposition}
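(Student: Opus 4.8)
The plan is to verify two separate things: that the gauged Laplacian $\hgen_{\vg}$ is a well-defined linear operator on $\Gamma$, and that its exponential $K_{\vg}=e^{-t\hgen_{\vg}}$ then makes sense. The only real content lies in the first claim, because \eqref{eq:def-gauged-Laplacian} defines $\hgen_{\vg}\phi$ through a choice of representatives $\phi=[\gt,\x]$ and $\omega=[\gt,\a]$ sharing a \emph{common} gauge $\gt$, so one must check that such a matching gauge always exists and that the resulting class $[\gt,\Lpl_{\eta,\kappa,\a}\x]$ is independent of every choice made.

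First I would record that a matching gauge always exists. Given any representative $\phi=[\gt,\x]$, fix the gauge $\gt$ appearing in it; since $\Omega=(\Xi\times\mc{A})/\Xi$ is a quotient by the \emph{same} group $\Xi$, the relation $[\gt_0,\a_0]=[\gt_0\gtt,\gtt^{-1}\a_0]$ applied with $\gtt=\gt_0^{-1}\gt$ rewrites $\omega$ as $[\gt,\a]$ for exactly that $\gt$. Hence $\hgen_{\vg}\phi$ can always be evaluated from a pair of matching representatives.

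The crux is independence of the representative, and I expect the covariance computation here to be the main (though purely mechanical) obstacle. Suppose $[\gt,\x]=[\gt',\x']$ and $[\gt,\a]=[\gt',\a']$. From the generating relation $[\gt,y]=[\gt\gtt,\gtt^{-1}y]$ the gauge component pins down the transition uniquely: $\gt'=\gt\gtt$ forces $\gtt=\gt^{-1}\gt'$, and this \emph{same} $\gtt\in\Xi$ acts on both data, giving $\x'=\gtt^{-1}\x$ and $\a'=\gtt^{-1}\a$ via the voltage action \eqref{def-action-Xi-A}. It then suffices to establish the covariance identity
$$\Lpl_{\eta,\kappa,\gtt^{-1}\a}(\gtt^{-1}\x)=\gtt^{-1}\,\Lpl_{\eta,\kappa,\a}(\x),$$
where $\Xi$ acts factorwise on $\X$. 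The hard part is this one line: writing $(\gtt^{-1}\a)_{ij}=\gtt_i^{-1}\a_{ij}\gtt_j$ and $(\gtt^{-1}\x)_j=\gtt_j^{-1}\x_j$, the factors $\gtt_j,\gtt_j^{-1}$ cancel inside the off-diagonal term $\a_{ij}\x_j$ while $\gtt_i^{-1}$ factors out of the entire $i$-th component, so that $(\Lpl_{\eta,\kappa,\gtt^{-1}\a}(\gtt^{-1}\x))_i=\gtt_i^{-1}(\Lpl_{\eta,\kappa,\a}\x)_i$. Feeding this back through $[\gt',\Lpl_{\eta,\kappa,\a'}\x']=[\gt\gtt,\gtt^{-1}\Lpl_{\eta,\kappa,\a}\x]=[\gt,\Lpl_{\eta,\kappa,\a}\x]$ shows the output class is unchanged, so $\hgen_{\vg}$ is well-defined.

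Finally, linearity of $\hgen_{\vg}$ is immediate: fixing one gauge $\gt$, the operations \eqref{eq:Gamma-operations} on $\Gamma$ together with linearity of $\x\mapsto\Lpl_{\eta,\kappa,\a}\x$ give $\hgen_{\vg}([\gt,\x]+r[\gt,\x'])=[\gt,\Lpl_{\eta,\kappa,\a}(\x+r\x')]=\hgen_{\vg}[\gt,\x]+r\,\hgen_{\vg}[\gt,\x']$. Since $\Gamma$ is finite-dimensional, $\hgen_{\vg}$ is a bounded linear endomorphism, whence the series $K_{\vg}=e^{-t\hgen_{\vg}}=\sum_{n\ge0}\tfrac{(-t)^n}{n!}\hgen_{\vg}^{\,n}$ converges absolutely and defines a continuous linear operator $\Gamma\to\Gamma$ for every $t$; this establishes well-definedness of $K_{\vg}$.
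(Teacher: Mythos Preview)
Your proof is correct and follows essentially the same route as the paper: the core step is the covariance identity $\Lpl_{\eta,\kappa,\gtt\a}(\gtt\x)=\gtt\,\Lpl_{\eta,\kappa,\a}\x$ (equivalently with $\gtt^{-1}$), which you verify componentwise exactly as the paper does. You add some extra bookkeeping---existence of a matching gauge, linearity, and convergence of the exponential series---that the paper leaves implicit, but the substance of the argument is identical.
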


		\begin{proof}
			We need to check that the definition of $\hgen_{\vg}$ does not depend on the choice of the representative $[\xi,\x ]$. To this end, we calculate $\hgen_{\vg}[\gt \gtt ^{-1}, \gtt \x ]$ and verify that it is equal to $\hgen_{\vg}[\gt,\x ]$.
			In order to evaluate $\hgen_{\vg}[\gt \gtt ^{-1},\gtt \x ]$, we transform the gauge for $\omega$ via
			\begin{equation}
				[\gt,\a] = [\gt \gtt^{-1},\gtt \a] = \omega.
			\end{equation}
			Thus, we have
			\begin{subequations}
			\begin{align}
			\hgen_{\vg}[\gt \gtt^{-1},\gtt \x ]
			\overset{\eqref{eq:def-gauged-Laplacian}}&{=} [\gt \gtt^{-1}, \Lpl_{\eta,\kappa,\gtt \a}(\gtt \x)]
			\overset{\substack{\eqref{def-action-Xi-A}\\ \eqref{eq:def-lalpha}}}{=}
			[\gt \gtt^{-1},\gtt \Lpl_{\eta,\kappa,\alpha}x] = [\gt,\Lpl_{\eta,\kappa,\alpha}x]
			\\
			\overset{\eqref{eq:def-Gamma}}&{=}
			\hgen_{\vg}[\gt,\x ].
			\end{align}
			\end{subequations}
		\end{proof}

		The following lemma prepares the study of properties of the gauged Laplacian and heat kernel from Definition \ref{def:ginv-k}. Similar versions appear in \cite[Proposition 2.4]{gaoGeometrySynchronizationProblems2019}. We give a full proof to make this paper self-contained.
		\begin{lemma}[discrete integration by parts]\label{lem:dip}
			We have the following relation
			\begin{equation}\label{eq:discrete-int-by-parts}
				\lla \phi', \hgen_{\vg} \phi \rra_{Z} = \frac{1}{2} \sum_{i,j \in \nu} \kappa_{ij} \la \a_{ij}y_{j} - y_{i} ,\a_{ij}\x_j - \x_i \ra,
			\end{equation}
			for all $\phi = [\gt,\x ], \phi' = [\gt,y] \in \Gamma$ and $\omega = [\gt,\a] \in \Omega$.
		\end{lemma}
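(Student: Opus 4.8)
The plan is to reduce the identity to a direct computation in a fixed matching gauge and then symmetrize the resulting double sum. Since the statement fixes the common representative $\gt$ for $\phi=[\gt,\x]$, $\phi'=[\gt,y]$ and $\omega=[\gt,\a]$, I would work entirely with the coordinate expressions. First I would unfold the left-hand side using the weighted inner product \eqref{eq:def-bracket-z} together with the action \eqref{eq:def-gauged-Laplacian}, which gives
\[
	\lla \phi', \hgen_{\vg}\phi \rra_{Z}
	= \sum_{i \in \nu} Z_i \la y_i, (\Lpl_{\eta,\kappa,\a}x)_i \ra .
\]
Substituting the defining formula \eqref{eq:def-lalpha}, the normalization factor $Z_i$ cancels against the weight $1/Z_i$, leaving the one-sided expression
\[
	\lla \phi', \hgen_{\vg}\phi \rra_{Z}
	= -\sum_{i,j \in \nu} \kappa_{ij}\, \la y_i, \a_{ij}\x_j - \x_i \ra .
\]

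The second step is to expand the target right-hand side and show it collapses to this one-sided sum. I would expand the bilinear form $\la \a_{ij}y_j - y_i, \a_{ij}\x_j - \x_i \ra$ into four terms and then invoke two structural facts: (a) each $\a_{ij} \in \SO(d)$ is orthogonal, so that $\la \a_{ij}y_j, \a_{ij}\x_j \ra = \la y_j, \x_j \ra$ and $\la \a_{ij}y_j, \x_i \ra = \la y_j, \a_{ij}^{-1}\x_i \ra = \la y_j, \a_{ji}\x_i \ra$, using the voltage relation $\a_{ij}^{-1} = \a_{ji}$; and (b) the weight function satisfies $\kappa_{ij} = \kappa_{ji}$. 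Summing the four expanded terms against $\kappa_{ij}$ and relabeling $i \leftrightarrow j$, the two diagonal terms $\la y_j, \x_j \ra$ and $\la y_i, \x_i \ra$ coincide, and likewise the two cross terms $\la y_j, \a_{ji}\x_i \ra$ and $\la y_i, \a_{ij}\x_j \ra$ coincide. Each pair therefore contributes twice its value, the prefactor $\tfrac12$ is absorbed, and what remains is exactly $\sum_{i,j}\kappa_{ij}\la y_i,\x_i\ra - \sum_{i,j}\kappa_{ij}\la y_i,\a_{ij}\x_j\ra$, matching the one-sided sum from the first step.

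The only genuinely delicate point is the bookkeeping in the relabeling step: one must apply orthogonality of $\a_{ij}$ and the antisymmetry $\a_{ij}^{-1} = \a_{ji}$ \emph{simultaneously} in the cross terms, while tracking which index is summed where. Conceptually, the symmetrization succeeds precisely because $\kappa$ is symmetric and the voltage satisfies $\a_{ji} = \a_{ij}^{-1}$; these are exactly the two ingredients that convert the asymmetric one-sided sum into the manifestly symmetric quadratic form on the right (which, incidentally, also makes the positive semi-definiteness of $\hgen_{\vg}$ transparent, since setting $\phi'=\phi$ yields a nonnegative sum of squared norms). I expect no further obstacle, as everything else is a routine expansion.
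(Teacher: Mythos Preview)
Your proposal is correct and follows essentially the same route as the paper: unfold the left-hand side via \eqref{eq:def-bracket-z} and \eqref{eq:def-lalpha} to get the one-sided sum, then expand the right-hand side and use orthogonality of $\a_{ij}$, the relation $\a_{ij}^{-1}=\a_{ji}$, and $\kappa_{ij}=\kappa_{ji}$ to symmetrize and match. The paper packages the relabeling step through the identity $\sum_{i,j}O_{ij}=\tfrac12\sum_{i,j}(O_{ij}+O_{ji})$, but this is exactly your $i\leftrightarrow j$ swap written differently.
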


		\begin{proof}
		First write
		\begin{subequations}\label{eq:symmetry-lhs}
			\begin{align}
				\lla \phi', \hgen_{\vg} \phi \rra
				&=
				\sum_{i \in \nu} Z_i \la y_i, (\Delta_{\eta,\kappa,\alpha}x)_{i}\ra \\
				\overset{\eqref{eq:def-lalpha}}&{=}
				-\sum_{i,j \in \nu} \kappa_{ij} \la y_i , \a_{ij} \x_j - \x_i \ra \\
				&=
				-\sum_{i,j \in \nu}  \kappa_{ij} \Big(\la y_i, \a_{ij} \x_j \ra - \la y_i, \x_i \ra \Big) 
				\label{eq:symmetry-lhs-final}.
			\end{align}
		\end{subequations}
		The relation $\sum_{j \in \nu} \kappa_{ij} = Z_i$ from the definition \eqref{eq:def-zi} then implies
		\begin{equation}
			\sum_{i,j \in \nu} \kappa_{ij} \la y_i,x_i \ra =
			\sum_{i \in \nu} Z_i \la y_i , x_i \ra.
		\end{equation}
		
		Furthermore, the relations $\a_{ij}^{-1} = \a_{ij}^{\top} = \a_{ji}$ and $\kappa_{ij} = \kappa_{ji}$ imply 
		\begin{equation}
			  \sum_{i,j \in \nu} \kappa_{ij} \la y_i, \alpha_{ij} x_j \ra
			\overset{i\leftrightarrow j}{=} \sum_{i,j \in \nu} \kappa_{ji} \la y_{j},\alpha_{ji} x_i\ra
			= \sum_{i,j \in \nu} \kappa_{ij} \la \alpha_{ij}y_{j},x_{i}\ra
			= \sum_{i,j \in \nu} \kappa_{ij} \la x_i , \alpha_{ij} y_{j} \ra,
		\end{equation}
		such that we have
		\begin{equation}
			\frac{1}{2} \sum_{i,j\in \nu} \kappa_{ij}
			(\la y_i, \a_{ij} \x_j\ra + \la \x_i, \a_{ij} y_j\ra )
			=
			\sum_{i, j \in \nu} \kappa_{ij} \la y_i, \a_{ij} \x_j\ra.
		\end{equation}
		This last simplification can also be derived from the fact that for any matrix $O \in \R^{N \times N}, N = | \nu |$, one has
		\begin{equation}\label{eq:symmetrization}
			\sum_{i,j \leq N} O_{ij} = \frac{1}{2} \sum_{i,j \leq N} (O_{ij} + O_{ji}),
		\end{equation}
		thus only the symmetric part of $O$ contributes to the sum of all entries.

		These simplifications allow us to rewrite \eqref{eq:symmetry-lhs-final} as
		\begin{equation}
				\lla \phi', \hgen_{\vg} \phi \rra
				=
				- \frac{1}{2} \sum_{i,j \in \nu} \kappa_{ij}
				(\la y_i, \a_{ij} \x_j\ra + \la \x_i, \a_{ij}y_{j} \ra)
				+ \sum_{i \in \nu} Z_i \la y_i,x_i \ra.
		\end{equation}

		Putting together, we obtain for the right-hand side of \eqref{eq:discrete-int-by-parts}
		\begin{subequations}\label{eq:symmetry-rhs}
			\begin{align}
				&\quad \frac{1}{2} \sum_{i,j \in \nu} \kappa_{ij} \la \a_{ij} y_{j} - y_{i}, \a_{ij} \x_j - \x_i \ra \\
				&=
				- \frac{1}{2}\sum_{i, j \in \nu} \kappa_{ij} (\la \x_i,\a_{ij}y_{j} \ra + \la y_i, \a_{ij} \x_{j}\ra) \\ &\qquad
				+ \frac{1}{2}\sum_{i,j\in \nu}  \kappa_{ij} (\la \a_{ij}x_j, \a_{ij} y_{j} \ra + \la y_{i},x_i \ra)  \\
				\overset{\alpha_{ij}^{\top}=\alpha_{ij}^{-1}}&{=}
				- \frac{1}{2}\sum_{i,j \in \nu}  \kappa_{ij} (\la \x_i,\a_{ij} y_{j} \ra + \la y_i, \a_{ij} \x_{j}\ra) \\ &\qquad
				 + \frac{1}{2}\sum_{i,j \in \nu}  \kappa_{ij} (\la y_{j}, \x_j \ra + \la y_{i},x_i \ra)  \\
				&=
				- \frac{1}{2}\sum_{i,j \in \nu}  \kappa_{ij} (\la \x_i,\a_{ij}y_{j} \ra + \la y_i, \a_{ij} \x_{j}\ra)
				+ \sum_{i \in \nu} Z_i \la y_{i},x_i \ra,
			\end{align}
		\end{subequations}
		where the last step follows again from \eqref{eq:symmetrization}. Thus we obtained \eqref{eq:symmetry-lhs-final} which proves Equation \eqref{eq:discrete-int-by-parts}.
		\end{proof}

		\begin{proposition}[$K_{\vg}$ is a heat kernel]
			The operator $K_{\vg}$ is a heat kernel, i.e. $\hgen_{\vg}$ is self-adjoint and positive semi-definite.
		\end{proposition}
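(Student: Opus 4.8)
The plan is to obtain both required properties as immediate consequences of the discrete integration by parts formula of Lemma~\ref{lem:dip}, which was established precisely to expose the quadratic form underlying $\hgen_{\vg}$. (That $K_{\vg} = e^{-t\hgen_{\vg}}$ for a continuous generator is automatic here, since $\hgen_{\vg}$ is a linear endomorphism of the finite-dimensional space $\Gamma$; so only self-adjointness and positive semi-definiteness of $\hgen_{\vg}$, both with respect to $\lla\cdot,\cdot\rra_Z$, remain to be checked.) Recall that for $\phi = [\gt,\x]$ and $\phi' = [\gt,y]$ the lemma gives
\begin{equation*}
\lla \phi', \hgen_{\vg} \phi \rra_{Z} = \frac{1}{2} \sum_{i,j \in \nu} \kappa_{ij} \la \a_{ij}y_{j} - y_{i}, \a_{ij}\x_j - \x_i \ra,
\end{equation*}
so everything reduces to reading off symmetry and nonnegativity of the right-hand side.

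For self-adjointness I would first note that any two sections can be brought into a common gauge $\gt$, since $[\gt',y'] = [\gt,(\gt^{-1}\gt')y']$ by the equivalence relation in \eqref{eq:def-Gamma}; hence the hypothesis of Lemma~\ref{lem:dip} is no restriction. The right-hand side is then manifestly symmetric under the exchange $\x \leftrightarrow y$, because the Euclidean inner product is symmetric and therefore $\la \a_{ij}y_j - y_i, \a_{ij}\x_j - \x_i\ra = \la \a_{ij}\x_j - \x_i, \a_{ij}y_j - y_i\ra$ termwise. This yields $\lla \phi', \hgen_{\vg}\phi\rra_Z = \lla \phi, \hgen_{\vg}\phi'\rra_Z$, and using the symmetry of the real inner product $\lla\cdot,\cdot\rra_Z$ once more this rearranges to $\lla \phi', \hgen_{\vg}\phi\rra_Z = \lla \hgen_{\vg}\phi', \phi\rra_Z$, which is precisely self-adjointness.

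Positive semi-definiteness I would read off by specializing to $\phi' = \phi$, that is $y = \x$, whereupon the formula collapses to
\begin{equation*}
\lla \phi, \hgen_{\vg}\phi\rra_Z = \frac{1}{2}\sum_{i,j\in\nu}\kappa_{ij}\,\n{\a_{ij}\x_j - \x_i}^2 \geq 0,
\end{equation*}
since each weight satisfies $\kappa_{ij}\geq 0$ and every summand is a squared norm. I do not anticipate a genuine obstacle here: Lemma~\ref{lem:dip} carries the entire computational burden, and the proposition merely records that its right-hand side is a symmetric, nonnegative quadratic form. The only points meriting care are the common-gauge reduction used above and the bookkeeping that the form is evaluated with respect to the $Z$-weighted inner product $\lla\cdot,\cdot\rra_Z$ of \eqref{eq:def-bracket-z} rather than the metric \eqref{eq:def-norm-gamma}.
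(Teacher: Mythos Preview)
Your proposal is correct and follows essentially the same route as the paper: both invoke Lemma~\ref{lem:dip} and read off symmetry and nonnegativity of the resulting quadratic form. Your version is slightly more explicit about the common-gauge reduction and the factor $\tfrac12$, but otherwise the arguments coincide.
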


		\begin{proof}
			Lemma \ref{lem:dip} shows that
			\begin{equation}
				\lla \phi , \hgen_{\vg} \phi' \rra_{Z}
				=
				\sum_{i,j \in \nu} \kappa_{ij} \la \a_{ij} y_{j} - y_{i},\a_{ij}x_j - \x_i \ra
				=
				\lla \hgen_{\vg} \phi, \phi' \rra_{Z}
			\end{equation}
			for all $\phi= [\gt,\x]$ and $\phi'=[\gt,y]$ in $\Gamma$ and
			\begin{equation}
				\lla \phi , \hgen_{\vg} \phi \rra_{Z} = \sum_{i,j \in \nu} \kappa_{ij} \|\a_{ij}x_j - \x_i \|^2 \geq 0.
			\end{equation}
		\end{proof}

		We provide bounds on the spectrum of the heat kernel.
		\begin{proposition}[spectrum of the GHK]
			The spectra of $\hgen_{\vg}$ and $K_{\vg}^t$ lie in $[0,2]$ and $[e^{-2t},1]$ respectively.
		\end{proposition}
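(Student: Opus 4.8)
The plan is to reduce the statement to a Rayleigh-quotient bound on $\hgen_{\vg}$ and then transport it to $K_{\vg}^t$ through the spectral theorem. Since $\Gamma \cong \X$ is finite-dimensional and $\hgen_{\vg}$ was just shown to be self-adjoint and positive semi-definite with respect to $\lla\cdot,\cdot\rra_{Z}$, its spectrum is real and contained in $[0,\infty)$; thus the lower endpoint $0$ is immediate and the whole task is to prove the upper bound $\lambda \le 2$ for every eigenvalue. Equivalently, by the variational characterization of the extreme eigenvalue, it suffices to show $\lla \phi, \hgen_{\vg}\phi\rra_{Z} \le 2\,\lla\phi,\phi\rra_{Z}$ for all $\phi\in\Gamma$.

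First I would specialize Lemma \ref{lem:dip} to $\phi'=\phi$ (that is, $y=x$), which yields the clean quadratic form
\begin{equation}
	\lla \phi, \hgen_{\vg}\phi\rra_{Z} = \frac{1}{2}\sum_{i,j\in\nu}\kappa_{ij}\,\n{\a_{ij}\x_j - \x_i}^2 .
\end{equation}
The key elementary estimate is $\n{a-b}^2 \le 2\n{a}^2 + 2\n{b}^2$, which I would apply to each summand with $a=\a_{ij}\x_j$ and $b=\x_i$. The decisive structural input is that each $\a_{ij}\in\SO(d)$ is orthogonal, so $\n{\a_{ij}\x_j}^2 = \n{\x_j}^2$ and every edge term is controlled by $2\n{\x_j}^2 + 2\n{\x_i}^2$ with no residual dependence on the voltage.

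The remaining step is double-sum bookkeeping together with the weight normalization. Summing the bound and invoking the defining relation $\sum_{j\in\nu}\kappa_{ij}=Z_i$ from \eqref{eq:def-zi} and the symmetry $\kappa_{ij}=\kappa_{ji}$, both $\sum_{i,j}\kappa_{ij}\n{\x_i}^2$ and $\sum_{i,j}\kappa_{ij}\n{\x_j}^2$ collapse to $\sum_{i\in\nu}Z_i\n{\x_i}^2 = \lla\phi,\phi\rra_{Z}$, so that
\begin{equation}
	\lla \phi, \hgen_{\vg}\phi\rra_{Z} \le \sum_{i,j\in\nu}\kappa_{ij}\big(\n{\x_i}^2 + \n{\x_j}^2\big) = 2\sum_{i\in\nu}Z_i\n{\x_i}^2 = 2\,\lla\phi,\phi\rra_{Z}.
\end{equation}
This gives $\o{spec}(\hgen_{\vg})\subset[0,2]$. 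Finally I would diagonalize $\hgen_{\vg}$ in a $\lla\cdot,\cdot\rra_{Z}$-orthonormal eigenbasis; then $K_{\vg}^t = e^{-t\hgen_{\vg}}$ acts on each eigenvector with eigenvalue $e^{-t\lambda}$, and since $\lambda\mapsto e^{-t\lambda}$ is decreasing with $\lambda\in[0,2]$ we obtain $e^{-t\lambda}\in[e^{-2t},1]$, proving $\o{spec}(K_{\vg}^t)\subset[e^{-2t},1]$. No serious obstacle arises here: the only subtlety is selecting the inequality $\n{a-b}^2\le 2\n{a}^2+2\n{b}^2$ and exploiting orthogonality of $\a_{ij}$ to eliminate the voltage, after which the degree normalization by $Z_i$ makes the telescoping of the double sum exact.
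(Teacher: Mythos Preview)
Your proof is correct but takes a different route from the paper. The paper argues in Gershgorin style: it fixes an eigenpair $(\lambda,\phi)$, writes out the eigenvalue equation at the node $i$ where $\|x_i\|$ is maximal, takes the inner product with $x_i$, and bounds $|\lambda-1|\le 1$ via Cauchy--Schwarz and orthogonality of $\a_{ij}$. You instead bound the Rayleigh quotient directly, reusing Lemma~\ref{lem:dip} to express $\lla\phi,\hgen_{\vg}\phi\rra_Z$ as a sum of squared edge differences and then applying the parallelogram-type inequality $\|a-b\|^2\le 2\|a\|^2+2\|b\|^2$. Both arguments ultimately rest on the same two facts---$\a_{ij}\in\SO(d)$ is an isometry, and the row sums of $\kappa$ are the degrees $Z_i$---so neither is deeper than the other. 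Your approach has the pleasant feature of recycling the integration-by-parts lemma already established, making the proof feel more integrated with the surrounding development; the paper's Gershgorin approach is self-contained and centers the estimate at $\lambda=1$, which is natural for a normalized Laplacian but otherwise yields the same interval.
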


		\begin{proof}
			We use an argument of Gershgorin type. Assume $\lambda, \phi=[\xi,\x]$ is an eigenpair of $\hgen_{\vg}$. Then we can write
			\begin{equation}
				\hgen_{\vg}\phi = [\gt,\lambda \x ] = [\gt,\Lpl_{\eta,\kappa,\alpha}x], \quad \omega = [\gt,\a]
			\end{equation}
			which implies by \eqref{eq:def-lalpha}
			\begin{equation}
				\lambda \x_i = -\sum_{j \in \nu} \frac{\kappa_{ij}}{Z_i} (\a_{ij} \x_j - \x_{i}), \quad \forall i \in \nu.
			\end{equation}
			Taking the scalar product with $\x_i$ on both side and rearranging yields
			\begin{equation}
				(\lambda - 1) \|x_i\|^2 = -\sum_{j \in \nu} \frac{\kappa_{ij}}{Z_i} \la \x_i, \a_{ij} \x_j \ra, \quad \forall i \in \nu.
			\end{equation}
			We pick $i$ such that $\|x_i\|^2$ is maximal and take absolute values.
			\begin{equation}
			\begin{aligned}
				|\lambda - 1|
				&=
				\bigg| \sum_{j \in \nu} \frac{\kappa_{ij}}{Z_i \|x_i\|^2} \la \x_i, \a_{ij} \x_j \ra \bigg| \\
				&\leq
				\sum_{j \in \nu} \frac{\kappa_{ij}}{Z_i} \left| \frac{\la \x_i, \a_{ij} \x_j \ra}{\|x_i\|^2} \right|
				\\
				&\leq
				\sum_{j \in \nu} \frac{\kappa_{ij}}{Z_i} = 1
			\end{aligned}
			\end{equation}
			due to our choice of the index $i$ and $\alpha_{ij} \in \SO(d)$.			Thus $| \lambda - 1 | \leq 1$. Furthermore, we know that $\lambda$ is real, since $\hgen_{\vg}$ is self-adjoint. Hence $\lambda \in [0,2]$ and $e^{-\lambda t} \in [e^{-2t},1]$, which proves the claim.
		\end{proof}

\subsection{Axiomatic Description of the Gauged Laplacian}
	In this subsection, we give an additional description of the gauged Laplacian $\hgen_{\vg}$ which distinguishes it among all other gauge invariant operators. We first introduce further notation required to state our result.

	Let $\mc{F} : \Gamma \to \Gamma$ denote a gauge invariant data transformation. For any gauge $\xi \in \Xi$, we write $\mc{F}([\xi,x]) = [\xi,y(x)]$ for some $y(x) \in X$ which depends on both $\xi$ and $x$. We denote by $y(x)$ the value of the function $F: \Xi \times X \to X$ for the argument $(\xi,x)$ from Proposition \ref{prop:g-inv-trafos},
	\begin{equation}\label{eq:y-by-F}
		y(x) = F(\xi,x).
	\end{equation}

	We say that $\mc{F}$ is \textit{locally Laplacian} if, for every $i \in \nu$, there is gauge $\xi^{i} \in \Xi$ such that, for every $\phi \in \Gamma$, one has
	\begin{equation}\label{eq:def-yi}
		\mc{F}([\xi^{i},x]) = [\xi^{i},y^{i}(x)], \quad y^{i}(x) \coloneq F(\xi^{i},x), \quad \phi = [\xi^{i},x]
	\end{equation}
	and
	\begin{equation}\label{eq:def-y-diag}
		y^{i}(x)_{i} = -\sum_{j \in \nu} \frac{\kappa_{ij}}{Z_{i}}(x_{j} - x_{i}) \coloneq (\Delta_{\eta,\kappa} x)_{i} \in \R^{d}.
	\end{equation}
	The notation uses the linear operator $\Delta_{\eta,\kappa}: \R^{N \times d} \to \R^{N \times d}$, which can be identified as the \textit{random walk normalized graph Laplacian} \cite{hein2007graph} associated to the weighted graph $(\eta,\kappa)$.

	We set
	\begin{equation}\label{eq:ass-g}
		\xi^{ij} \coloneq (\xi^{i})^{-1} \xi^{j} \in \Xi
	\end{equation}
	for any pair $i,j \in \nu$, not necessarily adjacent in the graph $\eta$, with the property $\xi^{ij} = (\xi^{ji})^{-1}$. We call $\xi^{ij}$ the \textit{associated gauges} for a given locally Laplacian transformation $\mc{F}$, which satisfy the relation
	\begin{equation}
		\xi^{ji}\xi^{ik} = \xi^{jk}, \qquad \forall i,j,k \in \nu.
	\end{equation}

	These notions allow us to characterize the gauged Laplacians in terms of locally Laplacian transformations that satisfy a certain \textit{integrability condition} imposed on the associated gauges.

	\begin{proposition}[consistent locally Laplacian operators are gauged Laplacians]\label{prop:loc-lpl}
		Let $\mc{F} : \Gamma \to \Gamma$ be a locally Laplacian gauge invariant data transformation, such that the associated gauges $\xi^{ij}$ satisfy the relations
		\begin{equation}\label{eq:intergrability}
			\xi^{ij}_{i} = \xi^{ij}_{j}, \qquad \forall i,j \in \nu.
		\end{equation}
		Then there is a gauge invariant graph voltage $\omega \in \Omega$ such that
		\begin{equation}\label{eq:mcF=Lchi}
			\mc{F} = L_{\chi}, \qquad \chi = (\eta,\kappa,\omega),
		\end{equation}
		and for all $i,j,k \in \nu, j > k$, we have
		\begin{equation}\label{eq:def-alphai}
			\omega = [\xi^{i},\alpha^{i}], \qquad
			\alpha^{i}_{jk} = \xi^{ij}_{j} \xi^{ji}_{k} \in \SO(d).
		\end{equation}
	\end{proposition}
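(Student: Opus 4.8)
The plan is to build the voltage $\omega$ explicitly out of the local gauges $\xi^{i}$, to recognize the integrability hypothesis \eqref{eq:intergrability} as precisely the consistency condition for this construction, and then to match $\mc{F}$ with $L_{\chi}$ in a single gauge, invoking Proposition \ref{prop:g-inv-trafos}.

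First I would set, for each base vertex $i$, the candidate representative $\alpha^{i}\in\mc{A}$ by the closed form in \eqref{eq:def-alphai}, namely $\alpha^{i}_{jk}=\xi^{ij}_{j}\xi^{ji}_{k}$ on oriented edges $j>k$, extended by $\alpha^{i}_{kj}=(\alpha^{i}_{jk})^{-1}$ and by the identity off the edge set; each factor lies in $\SO(d)$, so $\alpha^{i}_{jk}\in\SO(d)$. The first genuine point is that $\alpha^{i}$ is a \emph{valid} voltage, i.e. that the single formula $\xi^{ij}_{j}\xi^{ji}_{k}$ is antisymmetric under $j\leftrightarrow k$. Expanding through $\xi^{ij}_{m}=(\xi^{i}_{m})^{-1}\xi^{j}_{m}$ from \eqref{eq:ass-g}, one computes $(\alpha^{i}_{jk})^{-1}=\xi^{ij}_{k}\xi^{ji}_{j}$ and sees that this equals $\xi^{ik}_{k}\xi^{ki}_{j}$ exactly when $\xi^{j}_{k}(\xi^{j}_{j})^{-1}=\xi^{k}_{k}(\xi^{k}_{j})^{-1}$; the latter is the integrability relation \eqref{eq:intergrability} rewritten with index pair $(j,k)$. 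Recognizing integrability as this antisymmetry condition --- so that $\alpha^{i}_{ml}=\xi^{im}_{m}\xi^{mi}_{l}$ holds on \emph{every} oriented edge, not only for $m>l$ --- is the conceptual crux, and the only place the hypothesis is indispensable.

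Next I would check that the family $(\xi^{i},\alpha^{i})_{i\in\nu}$ represents a single element $\omega\in\Omega$. It suffices to verify $\alpha^{i'}=\xi^{i'i}\alpha^{i}$, since then $[\xi^{i'},\alpha^{i'}]=[\xi^{i}\xi^{ii'},\alpha^{i'}]=[\xi^{i},\xi^{ii'}\alpha^{i'}]=[\xi^{i},\alpha^{i}]$. A short computation of $(\xi^{i'i}\alpha^{i})_{jk}=\xi^{i'i}_{j}\,\alpha^{i}_{jk}\,(\xi^{i'i}_{k})^{-1}$ using the cocycle relation $\xi^{i'i}\xi^{ij}=\xi^{i'j}$ collapses the middle factors to $\xi^{i'j}_{j}\xi^{ji'}_{k}=\alpha^{i'}_{jk}$; this step, notably, does not use integrability. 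Setting $\omega\coloneqq[\xi^{i},\alpha^{i}]$ and $\chi=(\eta,\kappa,\omega)$ then also yields the asserted formula \eqref{eq:def-alphai}.

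Finally, to prove $\mc{F}=L_{\chi}$, I note that every section can be written in the single gauge $\xi^{i}$, so it is enough to show $\mc{F}([\xi^{i},x])=L_{\chi}([\xi^{i},x])$ for all $x\in X$; by \eqref{eq:def-gauged-Laplacian} the right-hand side is $[\xi^{i},\Delta_{\eta,\kappa,\alpha^{i}}x]$, so the claim reduces to $F(\xi^{i},x)=\Delta_{\eta,\kappa,\alpha^{i}}x$ componentwise. Fixing a node $m$, I would pass from gauge $\xi^{i}$ to gauge $\xi^{m}$ via the equivariance \eqref{eq:consis}, which gives $F(\xi^{i},x)_{m}=\xi^{im}_{m}F(\xi^{m},\xi^{mi}x)_{m}$, then insert the locally Laplacian identity \eqref{eq:def-y-diag} for vertex $m$, i.e. $F(\xi^{m},\cdot)_{m}=(\Delta_{\eta,\kappa}\cdot)_{m}$, and use the factorwise action $(\xi^{mi}x)_{l}=\xi^{mi}_{l}x_{l}$ together with $\xi^{im}_{m}\xi^{mi}_{m}=\II$ to arrive at
\[
F(\xi^{i},x)_{m}=-\sum_{l\in\nu}\frac{\kappa_{ml}}{Z_{m}}\big(\xi^{im}_{m}\xi^{mi}_{l}\,x_{l}-x_{m}\big).
\]
By the previous step the coefficient $\xi^{im}_{m}\xi^{mi}_{l}$ equals $\alpha^{i}_{ml}$ for every neighbor $l$, so the right-hand side is exactly $(\Delta_{\eta,\kappa,\alpha^{i}}x)_{m}$, proving $\mc{F}=L_{\chi}$. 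I expect the main obstacle to be purely the index bookkeeping of the composite rotations $\xi^{ij}_{m}=(\xi^{i}_{m})^{-1}\xi^{j}_{m}$, and the conceptual identification of integrability as exactly the ingredient that upgrades the diagonal-gauge identity \eqref{eq:def-y-diag} into the full off-diagonal voltage formula.
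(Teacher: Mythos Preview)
Your proposal is correct and follows essentially the same route as the paper: define $\alpha^{i}$ by \eqref{eq:def-alphai}, use the integrability condition \eqref{eq:intergrability} to verify antisymmetry $\alpha^{i}_{jk}=(\alpha^{i}_{kj})^{-1}$, use the cocycle relation $\xi^{i'i}\xi^{ij}=\xi^{i'j}$ to verify that all the $[\xi^{i},\alpha^{i}]$ represent the same $\omega$, and then match $\mc{F}$ with $L_{\chi}$ componentwise in a single gauge via equivariance \eqref{eq:consis} and the locally Laplacian identity \eqref{eq:def-y-diag}. Your explicit remark that only the antisymmetry step actually needs \eqref{eq:intergrability}, and that it is precisely what extends the formula $\alpha^{i}_{ml}=\xi^{im}_{m}\xi^{mi}_{l}$ to all oriented edges, is a clean way to isolate where the hypothesis enters.
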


	\begin{proof}
		We show that $\omega$ defines a gauge invariant voltage, which amounts to verifying that the following consistency conditions hold
		\begin{equation}
			\alpha_{jk}^{i} = (\alpha_{kj}^{i})^{-1}, \quad \text{and} \quad
			\alpha_{jk}^{m} = \xi^{mi}_{j} \alpha^{i}_{jk} \xi^{im}_{k}
		\end{equation}
		for all $i,j,k,m \in \nu$. The second equation one holds since
		\begin{equation}
			\xi^{mi}_{j} \alpha^{i}_{jk} \xi^{im}_{k}
			=
			\underbrace{\xi^{mi}_{j} \xi^{ij}_{j}}_{\overset{\eqref{eq:ass-g}}{=}\xi^{mj}_{j}}
			\underbrace{\xi^{ji}_{k} \xi^{im}_{k}}_{\overset{\eqref{eq:ass-g}}{=}\xi^{jm}_{k}}
			= \xi^{mj}_{j} \xi^{jm}_{k}
			\overset{\eqref{eq:def-alphai}}{=} \alpha^{m}_{jk}.
		\end{equation}
		For the first condition, we compute
		\begin{equation}
			\alpha^{i}_{jk}\alpha^{i}_{kj}
			=
			\xi^{ij}_{j}
			\underbrace{
				\xi^{ji}_{k}\xi^{ik}_{k}
			}_{\overset{\eqref{eq:ass-g}}{=} \xi^{jk}_{k}}
			\xi^{ki}_{j}
			=
			\xi^{ij}_{j}
			\underbrace{
				\xi^{jk}_{k}
			}_{\overset{\eqref{eq:intergrability}}{=} \xi^{jk}_{j}}
			\xi^{ki}_{j}
			=
			\xi^{ij}_{j}\xi^{jk}_{j}\xi^{ki}_{j}
			\overset{\eqref{eq:ass-g}}{=}
			\II.
		\end{equation}
		This shows that the definition \eqref{eq:def-alphai} is consistent and defines a graph voltage.

		We proceed in order to prove \eqref{eq:mcF=Lchi}.
		Since both transformations are gauge invariant, we can check equality in any gauge, which we choose here to be $\xi^{i}$ for a fixed choice of $i \in \nu$. Evaluating both operators on $\phi = [\xi^{i},x]$ yields
		\begin{equation}
			\mc{F}([\xi^{i},x]) = [\xi^{i},y^{i}(x)], \qquad
			L_{\chi}([\xi^{i},x]) = [\xi^{i},\Delta_{\eta,\kappa,\alpha^{i}}x],
		\end{equation}
	 	where $y^{i}(x)$ is defined by \eqref{eq:def-yi}.
		We thus have to verify that
		\begin{equation}\label{eq:proof-show-Delta-y-i}\Delta_{\eta,\kappa,\alpha^{i}}x = y^{i}(x).
		\end{equation}
		We separately check equality at all nodes $j \in \nu$ and first calculate $y^{i}(x)_j$. Due to gauge invariance, the equation
		\begin{equation}
			y^{i}(x) = F(\xi^{i},x)
			\overset{
				\substack{
					\eqref{eq:consis} \\ \eqref{eq:ass-g}
				}
			}{=}
			\xi^{ij} F(\xi^{j},\xi^{ji}x)
			\overset{
				\eqref{eq:def-yi}
			}{=}
			\xi^{ij} y^{j}(\xi^{ji}x).
		\end{equation}
		holds. In particular, we have
		\begin{equation}
			y^{i}(x)_{j} = \xi^{ij}_{j} y^{j}(\xi^{ji}x)_{j}, \quad \forall j \in \nu.
		\end{equation}
		On the other hand, relation \eqref{eq:def-y-diag} yields
		\begin{equation}
			y^{j}(\xi^{ji} x)_{j} = (\Delta_{\eta,\kappa} \xi^{ji}x)_{j}.
		\end{equation}
		By combining both equations, we obtain
		\begin{equation}\label{eq:proof-F-L-aux-1}
			y^{i}(x)_{j}
			= \xi^{ij}_{j}y^{j}(\xi^{ji}x)_j
			= \xi^{ij}_{j}(\Delta_{\eta,\kappa} \xi^{ji}x )_{j}
		\end{equation}
		which expands to
		\begin{equation}
			\xi^{ij}_{j}(\Delta_{\eta,\kappa} \xi^{ji}x )_{j}
			\overset{\eqref{eq:def-y-diag}}{=}
			- \xi^{ij}_{j} \sum_{k\in \nu} \frac{\kappa_{jk}}{Z_{j}}(\xi^{ji}_{k} x_{k} - \xi^{ji}_{j} x_{j})
			=
			- \sum_{k \in \nu} \frac{\kappa_{jk}}{Z_{j}}(\xi^{ij}_{j}\xi^{ji}_{k}x_{k} - x_{j})
		\end{equation}
		using $\xi^{ij}_{j}\xi^{ji}_{j} = \II$ in the last step. Applying the definition \eqref{eq:def-alphai} to the last expression gives
		\begin{equation}\label{eq:proof-F-L-aux-2}
			\xi^{ij}_{j}(\Delta_{\eta,\kappa} \xi^{ji}x)_{j}
			= -\sum_{k \in \nu} \frac{\kappa_{jk}}{Z_{j}}(\alpha_{jk}^{i} x_{k} - x_j)
			\overset{\eqref{eq:def-lalpha}}{=} (\Delta_{\eta,\kappa,\alpha^{i}}x)_{j}.
		\end{equation}
		Hence \eqref{eq:proof-F-L-aux-1}--\eqref{eq:proof-F-L-aux-2} show that
		\begin{equation}
			y^{i}(x)_{j} = (\Delta_{\eta,\kappa,\alpha^{i}}x)_{j}, \quad \forall j \in \nu.
		\end{equation}
		We conclude that Equation \eqref{eq:proof-show-Delta-y-i} holds and $\mc{F} = L_{\chi}$ with $\chi=(\eta,\kappa,\omega)$.

		We finish the proof by demonstrating that the equation $\mc{F} = L_{\chi}$ is compatible with $\mc{F}$ being locally Laplacian, i.e. the assumptions of the Proposition are not violated. By virtue of the definitions \eqref{eq:ass-g} and \eqref{eq:def-alphai}, we find $\alpha^{i}_{ij} =\II$ for all edges $i>j$, such that
			\begin{equation}
				y^{i}(x)_{i}
				= - \sum_{j\in\nu}\frac{\kappa_{ij}}{Z_{i}}(\alpha^{i}_{ij} x_{j} - x_{i})
				= - \sum_{j \in \nu} \frac{\kappa_{ij}}{Z_{j}}(x_{j} - x_{i})
				= (\Delta_{\eta,\kappa} x)_{i}, \quad \forall i \in \nu,
			\end{equation}
		which is Equation \eqref{eq:def-y-diag}.
	\end{proof}

	\begin{remark}[associated gauges and synchronizability]
		Suppose that $\mc{F}$ is a locally Laplacian operator and the associated gauges $\xi^{ij}$ are all equal to the unit element, that is $\xi^{ij}_{k} = \II$ for all $i,j,k \in \nu$. Then condition \eqref{eq:intergrability} is directly satisfied and hence $\mc{F}$ is equal to some operator $L_{\chi}$.
		In particular, then the voltage $\omega$ is \textit{synchronizable}. This notion is defined and discussed in detail in Section \ref{sec:transform}.
	\end{remark}

	Another way of characterizing the gauged Laplacian is to analyze its \textit{lifting properties}. For a given graph $\eta$ with edge set $\veps$, we can construct a family of related graphs, called the \textit{tree blow-ups of $\eta$}. We now iteratively construct such a tree blow-up of $\eta$, which we denote by $\tilde \eta$, with vertex set $\tilde \nu$ and edge set $\tilde \veps$.

	Start with a spanning tree $\tilde\eta \subset \eta$. For every vertex $i\in\nu$, we denote with $\tilde{i}$ the corresponding vertex of $\tilde\nu$. These two vertex sets are in one-to-one correspondence at this stage, since the spanning tree contains all the vertices of $\eta$. The edge set $\tilde \veps$ is a subset of $\veps$. 

	Define an orientation for the edges of $\tilde\eta$ by setting $\tilde{i} > \tilde{j}$ if and only if $i>j$. Then, for every edge $i>j$ in $\veps \setminus \tilde{\veps}$, add a new vertex $I$ to $\tilde\nu$ and a new edge to $\tilde{\veps}$ from $\tilde{j}$ to the newly created vertex $I$ with orientation $I>\tilde{j}$. The resulting graph $\tilde\eta$ is a tree blow-up of $\eta$.

	Any tree blow-up $\tilde\eta$ has the following properties.
	\begin{enumerate}
		\item $\tilde\eta$ is a tree.

		\item There is a bijection between the edge sets $\tilde\veps$ and $\veps$.

 		\item There is a surjective mapping $\pi:\tilde\nu\to\nu$, with $\tilde{i} \in \pi^{-1}(i)$ for all $i\in\nu$.

		\item $\tilde{\eta}$ inherits a symmetric edge weight function $\tilde{\kappa}$ by setting $\tilde{\kappa}_{IJ}\coloneq\kappa_{\pi(I)\pi(J)}$ for all $I,J \in \tilde{\nu}$.

		\item  We denote by $\tilde{\Gamma} = (\tilde{\Xi} \times \tilde{X}) / \tilde{\Xi}$ the space of gauge invariant node features over the graph $\tilde{\eta}$, where $\tilde{X} = \R^{\tilde{N} \times d}$ denotes the $d$ node features over $\tilde{\eta},\tilde{N} = |\tilde{\nu}|$ and $\tilde{\Xi} = (\SO(d))^{\tilde{N}}$. There are mappings  $\sigma_1 : \Gamma \to \tilde{\Gamma}$ and $\sigma_2 : \tilde{\Gamma} \to \Gamma$, defined by
		\begin{equation}\label{eq:def-sigma-1}
			\sigma_1(\phi)_{I}
			= \phi_{\pi(I)}, \qquad \forall I \in \tilde{\nu}
		\end{equation}
		and
		\begin{equation}\label{eq:def-sigma-2}
			\sigma_2(\tilde{\phi})_i
			= \sum_{\substack{I \in \tilde{\nu} \\  \pi(I) = i}} \tilde{\phi}_I, \qquad \forall i \in \nu.
		\end{equation}
	\end{enumerate}

	\begin{proposition}[tree blow-up lifting]\label{prop:blow-up-tree}
		Let $\mc{F}:\Gamma \to \Gamma$ denote a gauge invariant transformation over the graph $\eta$, with tree blow-up $\tilde{\eta}$. Then
		\begin{equation}
			\mc{F} = L_{\chi},
		\end{equation}
		for some gauge invariant voltage graph $\chi = (\eta,\kappa,\omega)$ if and only if there is a gauge $\tilde\psi \in \tilde \Xi$ such that the diagram
		\begin{equation}\label{eq:tree-lift}
			\begin{tikzcd}
				&\tilde X \ar[r,"\tilde{\Delta}_{\tilde\eta,\tilde\kappa}"] &\tilde X \ar[dr,"\Sigma_{\tilde{\xi}}^{-1}"] &\\ 
 				\tilde{\Gamma}\ar[ru,"\Sigma_{\tilde \xi}"] & & & \tilde{\Gamma} \ar[dl,"\sigma_{2}"] \\
				&\Gamma \ar[ul,"\sigma_{1}"] \ar[r,"\mc{F}"] & \Gamma&
			\end{tikzcd},
		\end{equation}
		commutes, where $\tilde{\Delta}_{\tilde\eta,\tilde\kappa} : \tilde X \to \tilde X$ is a weighted graph Laplacian on $\tilde \eta$ defined by 
		\begin{equation}\label{eq:def-delta-td}
			(\tilde{\Delta}_{\tilde\eta,\tilde\kappa}\tilde\x)_{I}
			= -\sum_{J\in \tilde\nu} \frac{\tilde\kappa_{IJ}}{Z_{\pi(I)}} (\tilde\x_{J} - \tilde\x_{I}) \qquad \forall \tilde x \in \tilde X, I \in \tilde \nu,
		\end{equation}
		and $\Sigma_{\xi}$ was defined in in the proof of Proposition \ref{prop:g-inv-trafos}.
	\end{proposition}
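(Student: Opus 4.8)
The plan is to reduce the whole statement to a single identity between operators on $X$ by passing through the canonical trivialization $c:\Gamma\to X$, $[\xi,x]\mapsto\xi x$ (this is $\Sigma_{\mathbf 1}$ in the notation of the proof of Proposition~\ref{prop:g-inv-trafos}). Writing $w=c(\phi)\in X$, I would first express the five arrows of \eqref{eq:tree-lift} in these coordinates. A direct computation using $\sigma_1(\phi)_I=\phi_{\pi(I)}$ gives that $\Sigma_{\tilde\xi}\circ\sigma_1$ sends $\phi$ to the element $u\in\tilde X$ with $u_I=\tilde\xi_I^{-1}w_{\pi(I)}$; that $\Sigma_{\tilde\xi}^{-1}(v)=[\tilde\xi,v]$; and that $c\circ\sigma_2\circ\Sigma_{\tilde\xi}^{-1}$ sends $v\in\tilde X$ to the element of $X$ with $i$-th component $\sum_{\pi(I)=i}\tilde\xi_I v_I$. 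Inserting $\tilde\Delta_{\tilde\eta,\tilde\kappa}$ from \eqref{eq:def-delta-td} and simplifying $\tilde\xi_I\tilde\xi_I^{-1}=\II$, the composition along the top of the diagram becomes the operator $T$ on $X$ with
\begin{equation}
  T(w)_i=-\sum_{\pi(I)=i}\ \sum_{J\in\tilde\nu}\frac{\tilde\kappa_{IJ}}{Z_i}\big(\tilde\xi_I\tilde\xi_J^{-1}\,w_{\pi(J)}-w_i\big).
\end{equation}

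The key combinatorial step is to evaluate this double sum using the structure of the tree blow-up. Because the edge sets $\tilde\veps$ and $\veps$ are in bijection and $\tilde\kappa_{IJ}=\kappa_{\pi(I)\pi(J)}$, summing over the fiber $\pi^{-1}(i)$ collects exactly the edges of $\eta$ incident to $i$: the tree edges at $i$ enter through $\tilde i$, the non-tree edges emanating from $i$ enter through the leaves that are copies of $i$, and the non-tree edges terminating at $i$ enter through the leaves attached to $\tilde i$. Hence each edge $i\sim j$ appears precisely once, with weight $\kappa_{ij}/Z_i$, and
\begin{equation}
  T(w)_i=-\sum_{j\sim i}\frac{\kappa_{ij}}{Z_i}\big(A_{ij}\,w_j-w_i\big),\qquad A_{ij}\coloneqq\tilde\xi_{I(i,j)}\tilde\xi_{J(i,j)}^{-1},
\end{equation}
where $I(i,j),J(i,j)$ are the endpoints over $i$ resp.\ over $j$ of the unique edge of $\tilde\eta$ lying over $i\sim j$. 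Since $\tilde\xi_I\in\SO(d)$ and interchanging $i\leftrightarrow j$ interchanges the two endpoints, $A_{ij}\in\SO(d)$ and $A_{ij}=A_{ji}^{-1}$, so $A$ is the trivial-gauge representative of a genuine gauge invariant voltage $\omega'$. Comparing with the coordinate form of the gauged Laplacian, namely $c(L_\chi\phi)_i=-\sum_j\frac{\kappa_{ij}}{Z_i}\big(\zeta_i\alpha_{ij}\zeta_j^{-1}\,w_j-w_i\big)$ for $\omega=[\zeta,\alpha]$, this proves the master identity that the top path of \eqref{eq:tree-lift} is always a gauged Laplacian $L_{\chi'}$, with $\chi'=(\eta,\kappa,\omega')$ and voltage read off from $\tilde\xi$.

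Both implications then follow quickly. For ``$\Leftarrow$'', if \eqref{eq:tree-lift} commutes for some $\tilde\xi$, then $\mc F=T=L_{\chi'}$, which is the desired conclusion. For ``$\Rightarrow$'', given $\mc F=L_\chi$ whose trivial-gauge voltage is $B_{ij}=\zeta_i\alpha_{ij}\zeta_j^{-1}$, it suffices to exhibit a gauge $\tilde\xi\in\tilde\Xi$ solving the edge equations $B_{ij}=\tilde\xi_{I(i,j)}\tilde\xi_{J(i,j)}^{-1}$ for every edge, since then $T=L_\chi=\mc F$ by the master identity. I would construct $\tilde\xi$ by fixing its value at a root of $\tilde\eta$ arbitrarily and propagating along the unique tree paths, each edge equation determining one endpoint value from the other.

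This last step is exactly where the tree structure is indispensable, and I expect it to be the conceptual heart of the argument: on the tree $\tilde\eta$ there are no cycles, hence no holonomy obstruction, so the propagation is globally consistent and defines $\tilde\xi$ unambiguously; the count $|\tilde\nu|-1=|\tilde\veps|=|\veps|$ confirms that the construction has exactly enough freedom to realize an arbitrary voltage. Phrased invariantly, this is the statement that on a tree every voltage is a gauge coboundary, which is precisely what the blow-up $\tilde\eta$ was engineered to make true. By contrast, the coordinate reduction of the first paragraph and the fiberwise edge bookkeeping of the second are routine once the canonical trivialization $c$ and the edge bijection $\tilde\veps\cong\veps$ are in place.
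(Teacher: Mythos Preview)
Your argument is correct and rests on the same two ingredients as the paper's proof: the edge bijection $\tilde\veps\cong\veps$ to show that the long composite is a gauged Laplacian, and tree synchronizability (Lemma~\ref{lem:tree-synchronization}/Corollary~\ref{cor:tree-gauge}) to realize a prescribed voltage from a gauge on $\tilde\eta$. The organization differs somewhat: the paper treats the two implications separately in a general gauge $\xi$, introducing auxiliary gauges $\tilde\zeta_I=\xi_{\pi(I)}$ and $\tilde\psi$ with $\tilde\xi=\tilde\zeta\tilde\psi^{-1}$ and then chasing the diagram; you instead work entirely in the canonical trivialization $c=\Sigma_{\mathbf 1}$, compute the top path once as an explicit operator $T$ on $X$, recognize it as $L_{\chi'}$ with voltage $A_{ij}=\tilde\xi_{I(i,j)}\tilde\xi_{J(i,j)}^{-1}$, and read off both implications from this single ``master identity''. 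Your route is a bit more economical and makes the role of tree synchronizability (every voltage on $\tilde\eta$ is a gauge coboundary) more transparent, at the cost of hiding the auxiliary gauge $\tilde\psi$ that the paper makes explicit; the mathematical content is the same.
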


	\begin{proof}

		We first assume that $\mc{F} = L_{\chi}$ holds and show that the diagram \eqref{eq:tree-lift} commutes for some $\tilde \xi$. We pick a gauge $\xi \in \Xi$ and write $\omega = [\xi,\a]$. Associated to $\xi$, we construct another gauge $\tilde\zeta\in\tilde\Xi$ by setting
		\begin{equation}\label{eq:proof-zeta-sigma-xi}
			\tilde\zeta_{I} \coloneqq \xi_{\pi(I)}.
		\end{equation}

		Next, we observe that there is a bijection between the set of voltages $\mc A$ on $\eta$ and voltages $\tilde{\mc A}$ on $\tilde \eta$, since the edge sets of the two graphs are correspond one-to-one to each other. For every voltage $\alpha \in \mc{A}$ there is an associated voltage $\tilde \alpha \in \tilde{\mc A}$ defined by
		\begin{equation}
			\tilde \alpha_{IJ} = \alpha_{ij},
		\end{equation} 
		where $I>J \in \tilde \veps$ is the edge uniquely corresponding to the edge $i>j \in \veps$. We infer that there is a gauge $\tilde{\psi} \in \tilde\Xi$ such that
		\begin{equation}\label{eq:proof-sync}
			\alpha_{ij} = \tilde \alpha_{IJ} = \tilde \psi_{I}^{-1} \tilde \psi_{J}, \qquad \forall I>J \in \tilde \veps,
		\end{equation}
		which follows from the fact that $\tilde\eta$ is a tree and Corollary \ref{cor:tree-gauge}.
		Then there is a gauge $\tilde\xi \in \tilde\Xi$ that satisfies
		\begin{equation}\label{eq:proof-blow-up-gauge-transform}
			\tilde\zeta\tilde\psi^{-1}=\tilde\xi.
		\end{equation}

		We now show that $\tilde \xi \in \tilde \Xi$ yields the desired commutative diagram. The relation to be shown reads as follows
		\begin{equation}
			\mc{F} \overset{!}{=} \sigma_{2} \circ \Sigma_{\tilde\xi}^{-1} \circ \tilde{\Delta}_{\tilde\eta,\tilde\kappa} \circ \Sigma_{\tilde \xi} \circ \sigma_{1}.
		\end{equation} 
		We further break down this relation for a section $\phi = [\xi,x]$, by evaluating the right hand side
		\begin{subequations}\label{eq:proof-eq-comm-diagram}
		\begin{align}
			[\xi,x]
			&\xrightarrow{\sigma_{1}} [\tilde\zeta, \tilde x] = [\tilde\zeta\tilde\psi^{-1},\tilde\psi\tilde\x] = [\tilde\xi, \tilde\psi\tilde x] \\
			&\xrightarrow{\Sigma_{\tilde\xi}} \tilde \psi \tilde x 			\xrightarrow{\tilde{\Delta}_{\tilde\eta,\tilde\kappa}} \tilde\Delta_{\tilde\eta,\tilde\kappa}\tilde \psi \tilde x \\
			&\xrightarrow{\Sigma_{\tilde\xi}^{-1}} [\tilde \xi,\tilde{\Delta}_{\tilde\eta,\tilde\kappa}\tilde \psi \tilde x]
			=
			[\tilde{\zeta}, \tilde{\psi}^{-1} \tilde{\Delta}_{\tilde\eta,\tilde\kappa}\tilde \psi \tilde x]
			\\
			&\xrightarrow{\sigma_{2}}
			\sigma_2([\tilde{\zeta}, \tilde{\psi}^{-1} \tilde{\Delta}_{\tilde\eta,\tilde\kappa}\tilde \psi \tilde x]),
		\end{align}
		\end{subequations}
		where $\tilde{x}_I = x_{\pi(I)},\forall I \in \tilde \nu$. If we write $\mc{F}([\xi,x]) = [\xi,y(x)]$ and use the definition \eqref{eq:def-sigma-2} of $\sigma_2$, the commutativity of the diagram \eqref{eq:tree-lift} is equivalent to the relation
		\begin{subequations}\label{eq:converse}
		\begin{align}
			y(x)_i
			\overset{!}&{=} \sum_{\substack{I\in \tilde\nu \\ \pi(I) = i}} \tilde{\psi}^{-1}_{I} (\tilde{\Delta}_{\tilde\eta,\tilde\kappa} \tilde \psi \tilde x)_{I}, \qquad \forall i \in \nu \\
			\overset{\eqref{eq:def-delta-td}}&{=} -\sum_{\substack{I\in \tilde\nu \\ \pi(I) = i}} \tilde{\psi}_I^{-1}\sum_{J\in \tilde\nu} \frac{\tilde\kappa_{IJ}}{Z_{\pi(I)}} (\tilde{\psi}_{J} \tilde\x_{J} -\tilde{\psi}_I \tilde\x_{I})  \\
			&= -\sum_{\substack{I\in \tilde\nu \\ \pi(I) = i}} \sum_{J\in \tilde\nu} \frac{\tilde\kappa_{IJ}}{Z_{\pi(I)}} (\tilde{\psi}_I^{-1}\tilde{\psi}_{J} \tilde\x_{J} -\tilde\x_{I}),  
		\end{align}
		\end{subequations}
		which is valid due to the following argument.
		
		By assumption, $\mc{F} = L_\chi$ holds, from which $y(x) = \Delta_{\eta,\kappa,\alpha}x$ follows. It is therefore enough to show
		\begin{equation}\label{eq:chi-L-vs-tree-L}
			\sum_{j \in \nu} \frac{\kappa_{ij}}{Z_{i}}(\alpha_{ij}x_{j} - x_{i})
			\overset{!}{=}
			\sum_{\substack{I\in \tilde\nu \\ \pi(I) = i}} \sum_{J\in \tilde\nu} \frac{\tilde\kappa_{IJ}}{Z_{\pi(I)}} (\tilde{\psi}_I^{-1}\tilde{\psi}_{J} \tilde\x_{J} -\tilde\x_{I}),
		\end{equation}
		for all $i \in \nu$. 
		Taking into account $\tilde\x_{I} = x_{\pi(I)}$ and $\tilde\kappa_{IJ} = \kappa_{\pi(I)\pi(J)}$, the right hand side of \eqref{eq:chi-L-vs-tree-L} simplifies to
		\begin{equation}
			\sum_{\substack{I\in \tilde\nu \\ \pi(I) = i}} \sum_{J\in \tilde\nu} \frac{\tilde\kappa_{IJ}}{Z_{\pi(I)}} (\tilde{\psi}_I^{-1}\tilde{\psi}_{J} \tilde\x_{J} -\tilde\x_{I})
			=
			\sum_{\substack{I\in \tilde\nu \\ \pi(I) = i}} \sum_{J \in \tilde \nu}
			\frac{\kappa_{i\pi(J)}}{Z_{i}}
			(\tilde\psi_{I}^{-1}\tilde\psi_{J}\x_{\pi(J)} - \x_i), \quad \forall i \in \nu.
		\end{equation}
		Recall that for every edge $i>j\in \veps$, there is a unique edge $I>J\in \tilde\veps$ which also appears in this summation, and that for every such edge the relation $\alpha_{ij} = \tilde\psi_{I}^{-1}\tilde\psi_{J}$ holds. Furthermore, the sum above only visits vertices $J$ adjacent to $I$ due to the support of $\tilde\kappa$. This argument shows that \eqref{eq:chi-L-vs-tree-L} holds, after rearranging terms, which proves the claim.\\[2mm]

		As for the converse statement, suppose the gauge $\tilde \xi \in \tilde \Xi$ is given. The goal is to prove the relation $\mc{F} = L_\chi$ for some $\chi$. We start by defining a gauge $\xi \in \Xi$ via
		\begin{equation}
			\xi_i \coloneqq \tilde\xi_{\tilde i}.
		\end{equation}
		This entails the definition of two more gauges, $\tilde \zeta, \tilde \psi \in \tilde \Xi$ by
		\begin{equation}
			\tilde \zeta_{I} \coloneqq \xi_{\pi(i)}, \qquad
			\tilde\zeta\tilde\psi^{-1} = \tilde \xi.
		\end{equation}
		Next, evaluate $\mc{F}$ with respect to the gauge $\xi$ by writing $\mc{F}([\xi,x]) = [\xi,y(x)]$. The commutativity of \eqref{eq:tree-lift} then yields
		\begin{equation}
			y(x)_i = 
			\sum_{\substack{I\in \tilde\nu \\ \pi(I) = i}} \tilde{\psi}^{-1}_{I} (\tilde{\Delta}_{\tilde\eta,\tilde\kappa} \tilde \psi \tilde x)_{I}, \qquad \forall i \in \nu
		\end{equation}
		Using the above arguments for proving the first claim, one then has
		\begin{equation}
			y(x)_i = 
			\sum_{j \in \nu} \frac{\kappa_{ij}}{Z_i}(\alpha_{ij}x_j - x_i), \qquad \forall i \in \nu,
		\end{equation} 
		where
		\begin{equation}
			\alpha_{ij} \coloneqq \tilde\psi_{I}^{-1} \tilde \psi_{J},
		\end{equation}
		and $I>J \in \tilde \veps$ is the edge corresponding to $i>j \in \veps$. This shows that $y(x) = \Delta_{\eta,\kappa,\alpha} x$ and $\mc{F} = L_\chi$, where $\chi = (\eta,\kappa,[\xi,\alpha])$ and proves the second part of the equivalence.
	\end{proof}

	We summarize our findings.
	\begin{theorem}[axioms for the gauged Laplacian]\label{thm:axioms}
		The gauged Laplacian $L_{\chi} : \Gamma \to \Gamma$ is the unique gauge invariant transformation that
		\begin{enumerate}
			\item is locally Laplacian and satisfies the integrability condition from Proposition \ref{prop:loc-lpl};
			\item satisfies the tree blow-up lifting property from Proposition \ref{prop:blow-up-tree}.
		\end{enumerate}
		The gauged heat kernel $K_{\chi}$ is distinguished by the fact that its generator is the unique operator which has these properties.
	\end{theorem}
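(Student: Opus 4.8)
The plan is to read the theorem as the synthesis of the two preceding propositions: each of the two items is an equivalent characterization of being a gauged Laplacian, so it suffices to verify that $L_\chi$ realizes both properties and that, conversely, each property forces a gauge invariant transformation to be some $L_\chi$. Propositions \ref{prop:loc-lpl} and \ref{prop:blow-up-tree} already supply the substantive directions, so the remaining work is to check that $L_\chi$ itself satisfies item (1) and to assemble the equivalences into a uniqueness statement.

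First I would establish that every gauged Laplacian $L_\chi$ satisfies item (1). Writing $\omega = [\xi,\alpha]$ in a reference gauge, I construct for each node $i$ a gauge $\xi^{i} = \xi\tau^{i}$ that trivializes the voltage along the edges incident to $i$, namely $\tau^{i}_{i} = \II$, $\tau^{i}_{j} = \alpha_{ij}^{-1}$ for $j \sim i$, and $\tau^{i}_{k} = \II$ otherwise. Since representatives of $\omega$ have the form $[\xi\sigma,\sigma^{-1}\alpha]$, the voltage representative $\alpha^{i}$ in this gauge satisfies $\alpha^{i}_{ij} = \II$ for every neighbour $j$ of $i$, so the $i$-th output of $L_\chi$ collapses to $(\Delta_{\eta,\kappa}x)_{i}$, which is exactly \eqref{eq:def-y-diag}; hence $L_\chi$ is locally Laplacian. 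A short computation with the induced associated gauges $\xi^{ij}_{k} = (\tau^{i}_{k})^{-1}\tau^{j}_{k}$ then gives $\xi^{ij}_{i} = \alpha_{ij} = \xi^{ij}_{j}$ for adjacent $i,j$ and $\xi^{ij}_{i} = \II = \xi^{ij}_{j}$ for non-adjacent $i,j$, so the integrability condition \eqref{eq:intergrability} holds. This is the only genuine computation the theorem requires, and it is the step I expect to demand the most care, since integrability must be verified for all pairs $i,j$ and not only for adjacent ones; it is precisely this global requirement that forces the particular simultaneous choice of all the $\tau^{i}$ above.

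Next I would invoke the two propositions to close the characterization. Proposition \ref{prop:loc-lpl} shows that any gauge invariant transformation satisfying item (1) equals $L_\chi$ for the voltage reconstructed via \eqref{eq:def-alphai}; together with the previous paragraph this yields that item (1) holds for a transformation if and only if it is a gauged Laplacian. Proposition \ref{prop:blow-up-tree} is already stated as an equivalence, so item (2) likewise holds if and only if the transformation is some $L_\chi$. Consequently items (1) and (2) are each equivalent to the property $\mc{F} = L_\chi$, and in particular they hold simultaneously precisely for the gauged Laplacians; moreover the gauge invariant voltage $\omega$, and hence $\chi$, is recovered from $\mc{F}$ through the associated gauges by \eqref{eq:def-alphai}, so the transformation determines $\chi$ and the asserted uniqueness follows.

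Finally, for the heat kernel statement I would use that the generator of a strongly continuous self-adjoint semigroup is uniquely determined by the semigroup, e.g. by differentiating $K^{t}_{\chi} = e^{-tL_\chi}$ at $t = 0$ or by functional calculus. Since $L_\chi$ is the unique gauge invariant operator satisfying items (1) and (2), the kernel $K_\chi$ is singled out as the unique heat kernel whose generator enjoys these properties, which completes the argument.
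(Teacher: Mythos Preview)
Your proposal is correct. In the paper, Theorem \ref{thm:axioms} carries no proof at all: it is introduced with the sentence ``We summarize our findings'' and is meant as a direct corollary of Propositions \ref{prop:loc-lpl} and \ref{prop:blow-up-tree}. Your write-up follows exactly this intended logic but is more complete in one respect: you explicitly verify the forward direction of item (i), namely that every $L_\chi$ is locally Laplacian and satisfies the integrability condition \eqref{eq:intergrability}. The paper only checks at the end of the proof of Proposition \ref{prop:loc-lpl} that the constructed $L_\chi$ satisfies \eqref{eq:def-y-diag}, without exhibiting gauges $\xi^{i}$ for an arbitrary $L_\chi$ and checking \eqref{eq:intergrability} for all pairs $i,j$. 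Your explicit construction $\tau^{i}_{i}=\II$, $\tau^{i}_{j}=\alpha_{ij}^{-1}$ for $j\sim i$, $\tau^{i}_{k}=\II$ otherwise, together with the case split adjacent/non-adjacent, closes that gap cleanly. The remaining steps---invoking Proposition \ref{prop:loc-lpl} for the converse of (i), quoting the equivalence in Proposition \ref{prop:blow-up-tree} for (ii), recovering $\omega$ via \eqref{eq:def-alphai}, and differentiating $K_\chi^{t}$ at $t=0$ for the heat-kernel statement---match the paper's intent precisely.
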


\subsection{GHK as a Machine Learning Device}

	We review in this subsection the role of the gauged heat kernel in machine learning.

	The connection between heat kernel methods and machine learning methods has been fruitfully applied in the context of \textit{graph neural networks} \cite{kipfSemiSupervisedClassificationGraph2017a,defferrardConvolutionalNeuralNetworks2017,bronstein2017geometric}, where heat kernels on graphs were recognized as basic elements of generalized wavelet filtering architectures \cite{chamberlain2021grand,chamberlain2021beltrami}.

	The main building block of generalized \textit{message passing} network architectures is the following. Given input data $x$ are repeatedly transformed by some form of aggregation transformation $\mc{K}$, concatenated with non-linear activation functions. The connection to heat kernel methods results from identifying heat kernels $K = e^{-t\hgen}$ as candidates for the aggregation operator $\mc{K}$. The further step towards machine learning entails a parametrization of a suitable subspace of potential heat kernels and learning a suitable heat kernel from given data by optimizing some loss function.

	In the case of the gauged heat kernel, we can parametrize all gauge invariant Laplace operators, and thereby also all heat kernels over a given graph $\eta$, in terms of weight functions $\kappa$ and voltages $\omega$.
	Thus, the connection between the gauged heat kernel and machine learning is to interpret $K_{\chi}$ as a parametrized data transformation architecture, whose parameters are given by graph weights $\kappa$ and voltages $\a$, and to infer optimal weight from data.
	This approach was taken in \cite{bodnarNeuralSheafDiffusion2023,bambergerBundleNeuralNetworks2024} and \cite{cassel2025bundle}, where such parameters were estimated in the context of node classification on graphs. The authors report that the gauged heat kernel $K_{\vg}$ has favorable properties for solving such tasks.

	A motivation to pursue connections between gauged heat kernels and machine learning is to perform learning over geometrically meaningful parameter domains, such as the space of voltage graphs $\chi$ in present case. In \cite{bodnarNeuralSheafDiffusion2023}, voltage graphs were related to \textit{sheaf structures}, whereas \cite{gaoGeometrySynchronizationProblems2019,bambergerBundleNeuralNetworks2024} related them to discrete vector and principal bundles. The gauged heat kernel also resembles discretized \textit{kernel field transforms} for coordinate independent convolutional neural networks \cite{weilerCoordinateIndependentConvolutional2021}. The connection is that the gauged heat kernel may be viewed as discretization of a kernel field transform, with a trivial kernel field.

	We provide additional geometric interpretation of voltage graphs in terms of \textit{lattice gauge theory} \cite{Gattringer:2010zz}, see the Remarks \ref{rem:general-groups}, \ref{rem:graph-voltage-related}, \ref{rem:laplacian-relations} and Appendix \ref{sec:app}. This relates the space of graph voltages to parallel transport operators and gauge fields on principal fiber bundles.

	Building on intuitions gained from the gauge theory perspective, we introduce below novel analysis methods for the gauged heat kernel and voltage graphs with regard to their applications in data science. Specifically, we focus in Sections \ref{sec:transform} and \ref{sec:ym} on the confluence of \textit{synchronization} properties of voltage graphs, heat kernel transformations and Yang Mills curvature analysis. Especially the Yang-Mills \textit{energies} are promising in the context of machine learning, providing a scalar quantity for controlling the transformation properties of corresponding heat kernels.

	Overall, this approach highlights symmetry properties inherent to the gauged heat kernel and its embedding into the coordinate/gauge invariant deep learning formalism.

\section{Transformation Properties of the GHK}\label{sec:transform}

\subsection{GHK as a Projection Operator}
		We assume that $V$ is finite dimensional, which is the case for node feature data over finite graphs, and take a closer look to the operators introduced by Definition \ref{def:ginv-k}.

	\begin{proposition}[heat kernel converges to projection operator \cite{hansenSpectralTheoryCellular2019a,hansenOpinionDynamicsDiscourse2021}]\label{prop:projection}
		Let $K^t$ by a heat kernel with generator $\hgen$. Then $K^t$ is self-adjoint and is positive semi-definite for all scales $t > 0$. If $\ker L$ denotes the nullspace of the operator $\hgen: V \to V$ with the corresponding orthogonal projection $\o{proj}$. Then one has
		\begin{equation}
			\| K^t - \o{proj} \| \leq e^{- \mu t},
		\end{equation}
		where $\mu$ is the smallest non-zero eigenvalue of $\hgen$. Thus, the heat kernel family converges to projection operator $\o{proj}$ in the operator norm on $V$.
	\end{proposition}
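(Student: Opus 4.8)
The plan is to invoke the finite-dimensional spectral theorem for the self-adjoint operator $\hgen$ and thereby reduce the whole estimate to a scalar computation on eigenspaces. First I would record that, since $\hgen$ is self-adjoint and positive semi-definite on the finite-dimensional inner product space $V$, there is an orthonormal basis of eigenvectors with real, nonnegative eigenvalues, which I arrange as $0 = \lambda_1 = \dots = \lambda_m < \mu = \lambda_{m+1} \le \dots \le \lambda_{\dim V}$. Here the zero eigenspace is exactly $\ker \hgen$, and $\mu$ is the smallest nonzero eigenvalue. The continuous functional calculus then realizes $K^t = e^{-t\hgen}$ as multiplication by $e^{-t\lambda_k}$ on the $k$-th eigenvector; in particular $K^t$ is self-adjoint with spectrum $\{e^{-t\lambda_k}\} \subset (0,1]$, which simultaneously settles the asserted self-adjointness and positive semi-definiteness of $K^t$.

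Next I would compute the difference operator $K^t - \o{proj}$ in this eigenbasis. On $\ker \hgen$ both operators act as the identity, since $e^{-t\cdot 0} = 1$ and $\o{proj}$ restricts to the identity on its image $\ker\hgen$, so the difference vanishes there. On each eigenvector with $\lambda_k \ge \mu > 0$, the projection acts as $0$ while $K^t$ acts as $e^{-t\lambda_k}$, so the difference acts as multiplication by $e^{-t\lambda_k}$. Hence $K^t - \o{proj}$ is again self-adjoint and diagonal in the same orthonormal basis, with eigenvalues $0$ on $\ker \hgen$ and $e^{-t\lambda_k}$ on its orthogonal complement.

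Finally, the operator norm of a self-adjoint operator equals the largest absolute value of its eigenvalues, so
\begin{equation*}
	\n{K^t - \o{proj}} = \max_{k \,:\, \lambda_k > 0} e^{-t\lambda_k} = e^{-t\mu},
\end{equation*}
where the last equality uses that $s \mapsto e^{-ts}$ is strictly decreasing and $\mu = \min_{\lambda_k > 0}\lambda_k$. This yields the claimed bound $\n{K^t - \o{proj}} \le e^{-\mu t}$ (in fact with equality), and letting $t \to \infty$ gives convergence to $\o{proj}$ in operator norm. The degenerate case $\hgen = 0$, in which $K^t = \o{proj} = \mathrm{id}$ and no nonzero eigenvalue exists, makes the difference vanish identically and is handled separately by convention.

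I do not expect a substantial obstacle here; the only points requiring care are bookkeeping ones. One must confirm that the zero eigenspace coincides with $\ker \hgen$, so that $K^t$ and $\o{proj}$ genuinely agree on it and the difference is supported on the positive part of the spectrum, and one must invoke the characterization of the operator norm via the spectral radius, which is legitimate precisely because $K^t - \o{proj}$ is self-adjoint. The finite-dimensionality hypothesis guarantees a discrete spectrum on which the maximum is attained, so no limiting or approximation argument is needed.
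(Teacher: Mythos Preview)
Your argument is correct and follows the same approach as the paper: diagonalize the self-adjoint, positive semi-definite generator $\hgen$ on the finite-dimensional space $V$, express $K^t$ in the resulting eigenbasis, and read off the operator norm of $K^t - \o{proj}$ as $\max_{\lambda_k>0} e^{-t\lambda_k} = e^{-\mu t}$. The paper's own proof is in fact terser (it writes $K^t = \sum_r e^{-\lambda_r t}\varphi_r\varphi_r^\top$ and declares that ``the properties follow directly''), so your version simply fills in the details that the paper leaves implicit.
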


	\begin{proof}
		Since $\hgen$ is self-adjoint and positive semi-definite and $V$ is finite dimensional, we can diagonalize $\hgen$,
		\begin{equation}
			\hgen = \sum_{r = 1}^{\dim V} \lambda_r \varphi_r \varphi^\top_r, \qquad
			\hgen v = \sum_r \lambda_r \la \varphi_r,v \ra \varphi_r,
		\end{equation}
		where $\lambda_r \geq 0$ and $\varphi_r$ are the eigenvectors with duals $\varphi_r^{\top}$.
		Then we have
		\begin{equation}
			K^t = \sum_r e^{-\lambda_r t} \varphi_r \varphi^\top_r,
		\end{equation}
		and the properties follow directly.
	\end{proof}
	Proposition \ref{prop:projection} says that $K^t$ defines an asymptotic (in the sense of infinite time) homotopy between the identity $K^0$ and the projection operator $\rm{proj}$: $V$ transforms over time into $\ker L_{\chi}$ under $K^t$. The nullspace $\ker \hgen_{\vg}$ thus defines an important analysis tool for heat kernel methods, in that if one can characterize $\ker \hgen_{\vg}$, then one has an impactful characterization of the behavior of $K^t$.

	\subsection{Nullspace of the Gauged Laplacian}
		Motivated by Proposition \ref{prop:projection}, we now focus on and provide a complete characterization of the nullspace of the generator $\hgen_{\vg}$.

		\begin{lemma}[{local characterization of the nullspace \cite[Proposition 2.3]{gaoGeometrySynchronizationProblems2019}}]\label{lem:loc-desc}
			Let $\phi = [\gt,\x ]$ denote a section in the nullspace of $\hgen_{\vg}$, i.e.~$\hgen_{\vg} \phi = 0$. Then one has
			\begin{equation}\label{eq:alpha-nullspace}
				\x_i = \a_{ij} \x_{j}, \quad \forall i, j \in \nu, \quad i > j, \quad \omega = [\gt,\a].
			\end{equation}
			This property is independent of the choice of gauge $\xi \in \Xi$.
	  \end{lemma}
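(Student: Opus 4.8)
The plan is to specialize the discrete integration-by-parts identity of Lemma \ref{lem:dip} to the diagonal case $\phi' = \phi$, i.e.~$y = x$. This yields the quadratic form
\[
	\lla \phi, \hgen_{\vg} \phi \rra_Z = \frac{1}{2} \sum_{i,j \in \nu} \kappa_{ij} \, \|\a_{ij} \x_j - \x_i\|^2,
\]
which is exactly the non-negative expression already produced in the proof that $K_{\vg}$ is a heat kernel. The hypothesis $\hgen_{\vg}\phi = 0$ makes the left-hand side vanish, so the right-hand side becomes a sum of non-negative terms equal to zero.

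First I would argue that each summand vanishes individually. Since $\kappa_{ij} > 0$ precisely when $i \sim j$ and $\kappa_{ij} = 0$ otherwise, only the edges of $\eta$ contribute, and for every such edge the factor $\|\a_{ij}\x_j - \x_i\|^2$ must be zero. This gives $\x_i = \a_{ij}\x_j$ for all adjacent $i,j$, and in particular for every directed edge $i > j$, which is the asserted relation \eqref{eq:alpha-nullspace}. The identity $\a_{ji} = \a_{ij}^{-1}$ shows the two orientations of each edge encode equivalent constraints, so it suffices to record them for $i > j$.

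Finally I would verify gauge independence. Under a gauge change $\gtt \in \Xi$, the representatives transform as $\x \mapsto \gtt \x$ and $\a_{ij} \mapsto \gtt_i \a_{ij} \gtt_j^{-1}$ by \eqref{def-action-Xi-A}, and a direct substitution gives $(\gtt\a)_{ij}(\gtt \x)_j = \gtt_i \a_{ij} \gtt_j^{-1} \gtt_j \x_j = \gtt_i \x_i = (\gtt \x)_i$, so the nullspace condition is preserved verbatim in any gauge. I do not expect a genuine obstacle here: the entire content is carried by Lemma \ref{lem:dip}, and the only points requiring care are the positivity bookkeeping (that strictly positive weights $\kappa_{ij}$ appear on edges, allowing termwise conclusions) and the gauge-covariance check just described. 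I would also note in passing that the argument is reversible—if the local relation holds, the quadratic form vanishes, and for a positive semi-definite self-adjoint operator this forces $\phi \in \ker \hgen_{\vg}$—so the characterization is in fact an equivalence, though only the stated direction is needed here.
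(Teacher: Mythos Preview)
Your argument is correct and matches the paper's proof essentially line for line: both apply Lemma~\ref{lem:dip} with $\phi'=\phi$ to obtain a vanishing sum of non-negative terms and conclude $\x_i=\a_{ij}\x_j$ on each edge. Your additional remarks on gauge independence and the converse direction are sound elaborations that the paper leaves implicit.
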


		\begin{proof}
			By Lemma \ref{lem:dip}, we have
			\begin{equation}
			  0 = \lla \phi, \hgen_{\vg} \phi \rra_{Z} = - \sum_{i ,j \in \nu} \kappa_{ij} \| \a_{ij} \x_{j} - \x_{i} \|^{2}.
			\end{equation}
			This implies \eqref{eq:alpha-nullspace} since all terms in the sum are non-negative and $\kappa_{ij} \geq 0$.
		\end{proof}

		\begin{corollary}[independence of graph weights]
			The nullspace $\ker \hgen_{\vg}$ only depends on the graph voltage $\omega$ and not the graph weights $\kappa$.
		\end{corollary}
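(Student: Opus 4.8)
The plan is to promote the one-sided statement of Lemma \ref{lem:loc-desc} to a genuine two-sided characterization of $\ker \hgen_{\vg}$, and then simply read off that this characterization never references the numerical values of $\kappa$. First I would record that, since $\hgen_{\vg}$ is self-adjoint and positive semi-definite, a section $\phi$ lies in $\ker \hgen_{\vg}$ \emph{if and only if} the quadratic form vanishes, $\lla \phi, \hgen_{\vg} \phi \rra_Z = 0$. This equivalence is immediate from the spectral decomposition used in the proof of Proposition \ref{prop:projection}: writing $\phi$ in an eigenbasis, $\lla \phi, \hgen_{\vg}\phi\rra_Z = \sum_r \lambda_r |c_r|^2$ with all $\lambda_r \geq 0$, so the form vanishes exactly when every component along a strictly positive eigenvalue does.

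Next I would combine this with the discrete integration by parts formula of Lemma \ref{lem:dip}, which for $\phi = [\xi,x]$ and $\omega = [\xi,\alpha]$ reads $\lla \phi, \hgen_{\vg}\phi\rra_Z = \tfrac12 \sum_{i,j\in\nu}\kappa_{ij}\,\|\alpha_{ij}x_j - x_i\|^2$. Every summand is non-negative, and by the standing assumption the weight function satisfies $\kappa_{ij} > 0$ precisely on the edges $i \sim j$. Hence the sum vanishes if and only if $\alpha_{ij}x_j - x_i = 0$ for every edge $i \sim j$, giving the equality
\[
\ker \hgen_{\vg} = \bigl\{\, \phi = [\xi,x] \in \Gamma \;:\; x_i = \alpha_{ij}x_j \ \text{ for all } i \sim j,\ \omega = [\xi,\alpha] \,\bigr\}.
\]
This upgrades the implication of Lemma \ref{lem:loc-desc} to a full description of the nullspace.

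The key observation is then that the right-hand side above involves only the edge relation $\sim$ of $\eta$ and the voltage $\omega$, while the individual weights $\kappa_{ij}$ have dropped out completely. Therefore any two weighted graphs $(\eta,\kappa)$ and $(\eta,\kappa')$ over the same underlying edge set yield identical nullspaces, so $\ker \hgen_{\vg}$ depends on $\chi = (\eta,\kappa,\omega)$ only through $\omega$. I expect the only delicate point to be the passage from the single linear equation $\Delta_{\eta,\kappa,\alpha}x = 0$ at each node to the stronger per-edge conditions $x_i = \alpha_{ij}x_j$: this sharpening is exactly what positivity of the weights buys, since it lets each non-negative term in the quadratic form be forced to vanish independently. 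This is the single place where the support assumption on $\kappa$ is genuinely used, and it is what ensures that only the \emph{support} of $\kappa$, rather than its values, enters the nullspace.
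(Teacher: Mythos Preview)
Your argument is correct and is exactly the reasoning the paper has in mind: the corollary is stated immediately after Lemma~\ref{lem:loc-desc} without an explicit proof, and your write-up simply spells out the two-sided version of that lemma (using positive semi-definiteness for the ``only if'' and the obvious vanishing of each summand in \eqref{eq:def-lalpha} for the ``if''), then observes that the resulting edgewise condition $x_i=\alpha_{ij}x_j$ is independent of the actual values $\kappa_{ij}$.
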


		Lemma \ref{lem:loc-desc} says that every node feature $\x_i$ relates to its adjacent feature vectors $\x_j$ through a matrix multiplication. Keeping track of these relations over the entire graph will be relevant for what follows.

		\begin{definition}[{net voltages \cite[Section 2.1]{grossTopologicalGraphTheory1987}}]\label{def:net-voltages}
			Let $\gamma = (i_0, \dots , i_m)$ denote a (not necessarily oriented) path on the graph $\eta$, that is, the  vertices $i_{k}$ are all pairwise disjoint and $i_{k+1} \sim i_{k}$ for all $k \in \{0,\dots,m - 1\}$.
			We then denote by
			\begin{equation}\label{eq:def-Gamma-alpha-gamma}
				\Pi_{\a}(\gamma) = \a_{i_{n},i_{n-1}} \a_{i_{n -1}, i_{n-2}} \dots \a_{i_1 i_0} \in \SO(d),
			\end{equation}
			the \textit{net voltage} of $\a \in \mc{A}$ along the path $\gamma$.
		\end{definition}

		\begin{remark}[parallel transporters are net voltages]
			The matrices $\Pi_{\a}(\gamma)$ are also known as \textit{parallel transport matrices} in the language of geometric graph theory and lattice gauge theory \cite{Gattringer:2010zz,gaoGeometrySynchronizationProblems2019,bodnarNeuralSheafDiffusion2023}.
		\end{remark}

		\begin{proposition}[{\cite[Proposition 4]{bodnarNeuralSheafDiffusion2023}}]
			A section $\phi = [\gt,\x]$ is an element of $\ker \hgen_{\vg}$ with $\vg = (\eta,\kappa,[\xi,\a])$ if and only if for all paths $\gamma = (i_0, \dots , i_m)$, we have the relation
			\begin{equation}
				\x_{i_{m}} = \Pi_{\a}(\gamma) \x_{i_{0}}.
			\end{equation}
		\end{proposition}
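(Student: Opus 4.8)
The plan is to reduce the statement entirely to the local nullspace characterization of Lemma \ref{lem:loc-desc} and then propagate that relation along paths by a telescoping argument. By Lemma \ref{lem:loc-desc}, membership $\phi = [\gt,\x] \in \ker \hgen_{\vg}$ is equivalent to the edgewise identity $\x_i = \a_{ij}\x_j$ for every positively oriented edge $i > j$. The first preparatory observation is that, since $\a_{ij} = \a_{ji}^{-1}$, this relation is symmetric under reversal of the edge: left-multiplying $\x_i = \a_{ij}\x_j$ by $\a_{ji} = \a_{ij}^{-1}$ yields $\x_j = \a_{ji}\x_i$. Hence the relation $\x_a = \a_{ab}\x_b$ holds for \emph{every} adjacent pair $a \sim b$, irrespective of the chosen orientation, which is what is needed to walk along an arbitrary path.

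For the forward implication, I would assume $\phi \in \ker \hgen_{\vg}$, fix a path $\gamma = (i_0, \dots , i_m)$, and induct on the path length $m$. The base case $m = 1$ is exactly $\x_{i_1} = \a_{i_1 i_0}\x_{i_0}$, the orientation-free edge relation just established, and the inductive step applies this relation once more at the last edge and telescopes the accumulated product:
\[
	\x_{i_m} = \a_{i_m i_{m-1}}\x_{i_{m-1}} = \a_{i_m i_{m-1}}\a_{i_{m-1}i_{m-2}}\cdots \a_{i_1 i_0}\x_{i_0} = \Pi_\a(\gamma)\x_{i_0},
\]
where the final equality is the definition of the net voltage \eqref{eq:def-Gamma-alpha-gamma}.

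For the converse, I would assume the path relation $\x_{i_m} = \Pi_\a(\gamma)\x_{i_0}$ holds for all paths and then specialize it to single-edge paths $\gamma = (j,i)$ with $i \sim j$. For such a path one has $\Pi_\a(\gamma) = \a_{ij}$, so the hypothesis collapses to $\x_i = \a_{ij}\x_j$ for every adjacent pair, which in particular holds for all positively oriented edges $i > j$. This is precisely the local characterization of Lemma \ref{lem:loc-desc}, whence $\phi \in \ker \hgen_{\vg}$.

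I expect no substantive obstacle; the argument is a direct corollary of Lemma \ref{lem:loc-desc}. The only point deserving care is the orientation bookkeeping: a general path may traverse edges in either direction, while Lemma \ref{lem:loc-desc} is phrased only for positively oriented edges. The symmetry $\a_{ij} = \a_{ji}^{-1}$ recorded in the first paragraph removes this discrepancy cleanly, guaranteeing that the telescoping product defining $\Pi_\a(\gamma)$ is applicable at every step regardless of how the edges of $\gamma$ are oriented.
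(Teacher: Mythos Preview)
Your proposal is correct and follows the same approach as the paper, which simply says the result is immediate from Lemma~\ref{lem:loc-desc} by transitively extending the edge relation \eqref{eq:alpha-nullspace} along the path. One minor point: Lemma~\ref{lem:loc-desc} as stated only gives the forward implication (kernel membership implies the edge relations), so for the converse you should appeal directly to the definition \eqref{eq:def-lalpha} of $\Delta_{\eta,\kappa,\alpha}$, where the edge identities $\alpha_{ij}x_j = x_i$ make every summand vanish, rather than to the lemma.
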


		\begin{proof}
			This is immediate from the definition \eqref{eq:def-Gamma-alpha-gamma} of $\Pi_{\a}(\gamma)$ and Lemma \ref{lem:loc-desc} by transitively extending the relation \eqref{eq:alpha-nullspace} along the path.
		\end{proof}

		This property causes a stark contrast between the kernel of $L_\chi$ and the kernel of many other graph Laplacians \cite{hein2007graph}.

		For instance, see Figure \ref{fig:examples for graphs} which depicts two simple voltage graphs and the dimensions of the respective kernels of $L_\chi$. Notably, the kernels can be characterized without the specification of a weight function $\kappa$; the preceding proposition shows that the kernel of $L_\chi$ is \textit{independent} of $\kappa$. 			 
		The examples illustrate that the nullspace of the gauged Laplacian is qualitatively different to the kernel of basic graph Laplacians. While the kernel of many graph Laplacians is one-dimensional and generated by the \textit{constant} node signal, the kernel of the gauged Laplacians depends on the choice of voltage $\alpha$. The shown voltage graphs are examples where the dimension of the kernel is neither 1-dimensional, nor contains the constant node signal (the elements of the kernel are constant however \textit{up to gauge transformations}, as shown in Theorem \ref{thm:spanning-isom}).

		\begin{remark}[relation to the continous holonomy principle]
			This last proposition is to be compared to the \textit{holonomy principle} \cite[Theorem 4.9]{baumEichfeldtheorieEinfuehrungDifferentialgeometrie2014}, which says that the parallel sections of a vector bundle are in one-to-one correspondence to the \textit{holonomy invariant vectors}. The last result can be thought of as a discrete version of this statement.
		\end{remark}

		\begin{figure}[t]
			\begin{subfigure}[c]{0.3\textwidth}
				\includegraphics[width = \textwidth]{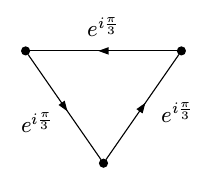}
			\end{subfigure}
			\hfill
			\begin{subfigure}[c]{0.6\textwidth}
				\includegraphics[width = \textwidth]{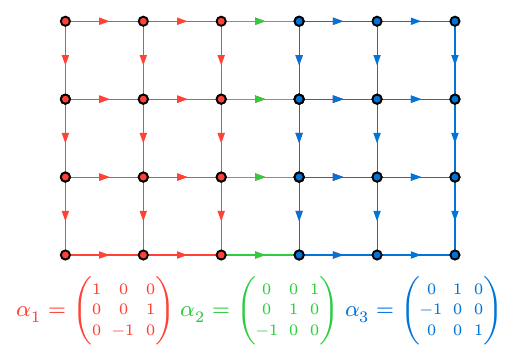}
			\end{subfigure}
			\caption{
				Examples of non-synchronizable voltages: a graph with $\dim \ker L_{\chi} = 0$ (left panel) and $\dim \ker L_{\vg} = 1$ (right panel) for the groups $\U(1)$ and $\SO(3)$ respectively.
			}
			\label{fig:examples for graphs}
		\end{figure}

		\begin{figure}[b]
			\centering
			\includegraphics[width=1\textwidth]{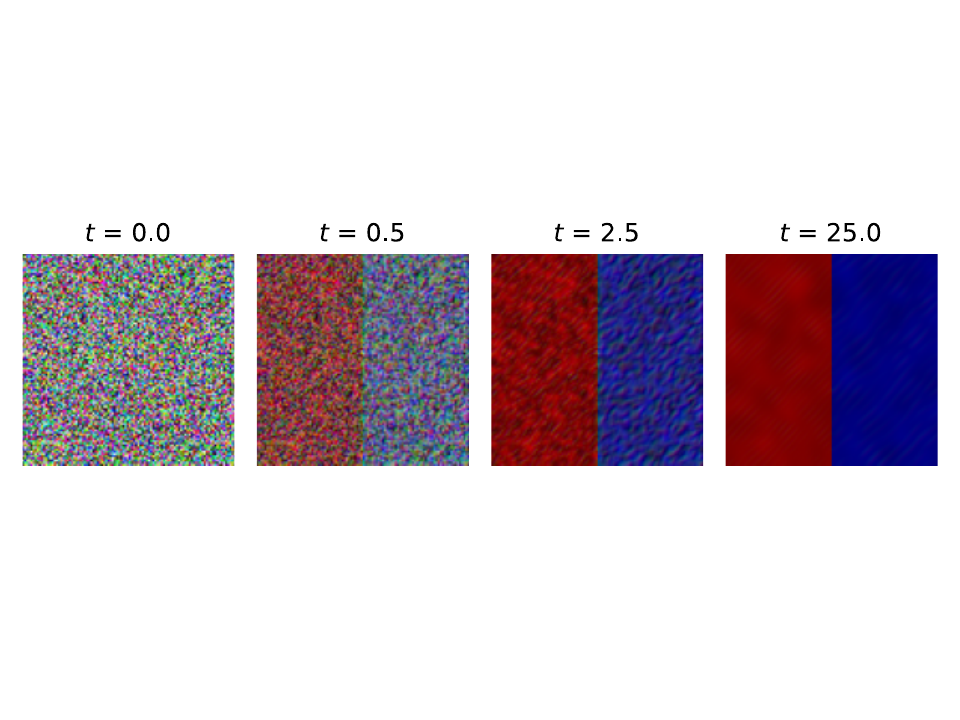}
			\caption{
				Visualization of heat kernel transformation $K^{t}_{\chi}$applied to the data shown by the left-most panel, for various time steps $t=\{0,0.5,2.5,25\}$, computed using a simple Euler scheme. The $\R^{3}$ node feature vectors are visualized in terms of colors. The given $\SO(3)$-voltage $\alpha$ configuration is depicted in the right panel of Figure \ref{fig:examples for graphs}. The heat kernel transformation causes the projection of the initial configuration over time to the one-dimensional nullspace of $L_{\chi}$. This contrast with the typical long-time transformations of classical graph Laplacians to constant output and also highlights a non-dissipative tendency of the gauged heat kernel: the channel-wise means of the initial data are not preserved during diffusion.
			}
		\end{figure}

		\begin{corollary}[{\cite[Lemma 6]{bodnarNeuralSheafDiffusion2023}}]\label{cor:L-dim-ub}
		The dimension of the nullspace $\ker \hgen_\chi \subset \Gamma$ is bounded by $d$
			\begin{equation}
				\dim \ker \hgen_\chi \leq d.
			\end{equation}
		\end{corollary}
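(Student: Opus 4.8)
The plan is to leverage the connectedness of $\eta$ to show that any null section is completely determined by its value at a single vertex, which immediately caps the dimension by $d$. First I would fix a gauge $\xi \in \Xi$ with $\omega = [\xi,\a]$. Since the nullspace is gauge-independent (Lemma \ref{lem:loc-desc}), the canonical isomorphism $\Gamma \cong \X$, $[\xi,\x] \mapsto \x$, identifies $\ker \hgen_\chi$ with the linear subspace
\begin{equation}
	W = \{\, \x \in \X \mid \x_i = \a_{ij}\x_j \ \text{for all } i > j \,\} \subset \X,
\end{equation}
where the defining relations are exactly those supplied by Lemma \ref{lem:loc-desc}.

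Next I would fix a base vertex, say $1 \in \nu$, and consider the linear evaluation map $\o{ev} : W \to \R^d$, $\x \mapsto \x_1$, which projects a section onto its value at the base vertex. The crux of the argument is to prove that $\o{ev}$ is injective. To this end, for an arbitrary vertex $i \in \nu$ I would use connectedness to choose a path $\gamma = (i_0, \dots, i_m)$ with $i_0 = 1$ and $i_m = i$. Transitively applying the defining relation of $W$ along $\gamma$---using $\a_{ij} = \a_{ji}^{-1}$ whenever an edge is traversed against its orientation---yields $\x_i = \Pi_\a(\gamma)\,\x_1$ with the net voltage $\Pi_\a(\gamma)$ of Definition \ref{def:net-voltages}. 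Consequently, if $\x_1 = 0$ then $\x_i = 0$ for every $i \in \nu$, so $\x = 0$ and $\o{ev}$ is injective.

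Injectivity of $\o{ev}$ forces $\dim \ker \hgen_\chi = \dim W \leq \dim \R^d = d$, which is the claim. I do not expect any serious obstacle here: surjectivity of $\o{ev}$ is never needed, and the only points requiring a little care are the correct bookkeeping of edge orientations inside the net voltage $\Pi_\a(\gamma)$ and the observation---already available from Lemma \ref{lem:loc-desc}---that the identification of $\ker \hgen_\chi$ with $W$ does not depend on the chosen gauge. Conceptually, the single place where the hypotheses are genuinely used is connectedness, which is precisely what guarantees that one path from the base vertex reaches every other vertex and thereby pins down the whole section from its value in a single copy of $\R^d$.
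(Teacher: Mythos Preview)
Your proof is correct and follows essentially the same approach as the paper: both argue that, by Lemma \ref{lem:loc-desc} and connectedness of $\eta$, a null section is completely determined by its value at a single vertex, which bounds the dimension by $d$. Your version is simply a more detailed rendering of the paper's terse argument, making explicit the evaluation map and the role of the net voltage along a connecting path.
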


		\begin{proof}
			Pick any vertex $i \in \nu$. The node feature $\x_{i} \in \R^d$ fixes the node features at all other vertices of the graph by Lemma \ref{lem:loc-desc}. This means that the dimension of $\ker\hgen_\chi$ cannot exceed the degrees of freedom of the node feature $\x_{i}$ which is given by $d$.
		\end{proof}

	\subsection{Nullspace Analysis via Voltage Synchronizability}
		We continue the analysis of the nullspace $\ker \hgen_{\vg}$ in order to provide a more concrete description.

		\begin{definition}[synchronization for voltages]\label{def:synchronization}
			Let $\a \in \mc{A}$ be a graph voltage. We call $\a$ \textit{synchronizable} if there is a group element $\gt \in \Xi$ such that
			\begin{equation}
				\a_{ij} = \gt_i^{-1} \gt_{j}, \quad \forall i > j \in \veps.
			\end{equation}
			Then the transformation $\gt$ is called the \textit{synchronization} for $\a$. We say $\a$ is \textit{synchronizable on a subgraph} $\eta' \subseteq \eta$ if the above relation holds for the edges of $\eta'$ induced by $\eta$.
		\end{definition}
		If a voltage can be synchronized, then its information is \textit{compressible} in the sense that the information $\alpha_{ij}$ that is distributed over all edges of the of the graph, can now be represented purely in terms of the information provided by the nodes.

			A voltage $\a \in \mc{A}$ is synchronizable if and only if there is a gauge transformation $\xi \in \Xi$ such that $\xi \a = \mbf{1}_{\mc{A}}$, where $\mbf{1}_{\mc{A}}$ denotes the voltage assigning the identity matrix to all edges. This leads to the proposition below.
		\begin{proposition}[gauge invariant synchronizations]\label{prop:char-sync}
			Let $\omega = [\gt,\a] = [\gt',\a']$ denote a gauge invariant graph voltage with two graph voltages $\a,\a'$ chosen as representatives. Then $\a'$ is synchronizable iff $\a$ is synchronizable. This means that `synchronizable' is a gauge invariant statement. The voltage $\omega$ is synchronizable iff there is an element $\gt'' \in \Xi$ such that
			\begin{equation}
				\omega = [\gt'',\mbf{1}_{\mc{A}}],
			\end{equation}
		\end{proposition}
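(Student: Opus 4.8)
The plan is to reduce everything to the explicit description of the $\Xi$-action on $\mc{A}$ together with the reformulation, already recorded in the text, that $\a$ is synchronizable precisely when $\sigma\a = \mbf{1}_{\mc{A}}$ for some $\sigma \in \Xi$. First I would unpack the hypothesis $[\gt,\a] = [\gt',\a']$: by the defining relation $[\gt\gtt,\beta] = [\gt,\gtt\beta]$ of $\Omega$, two representatives of the same class differ by a gauge transformation, so there exists $\gtt \in \Xi$ with $\a' = \gtt\a$, that is $\a'_{ij} = \gtt_i\a_{ij}\gtt_j^{-1}$ for all $i,j \in \nu$. This single relation is the engine of the whole argument.

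For the equivalence of synchronizability, suppose $\a$ is synchronizable with synchronization $\sigma \in \Xi$, so that $\a_{ij} = \sigma_i^{-1}\sigma_j$ for every edge $i > j \in \veps$. Substituting into $\a' = \gtt\a$ gives $\a'_{ij} = \gtt_i\sigma_i^{-1}\sigma_j\gtt_j^{-1} = (\sigma_i\gtt_i^{-1})^{-1}(\sigma_j\gtt_j^{-1})$, so $\a'$ is synchronized by the element $\sigma' \in \Xi$ with components $\sigma'_i = \sigma_i\gtt_i^{-1}$. The converse is identical after exchanging the roles of $\a$ and $\a'$, using $\a = \gtt^{-1}\a'$ (which holds since the $\Xi$-action is a genuine left action). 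Hence synchronizability of one representative forces it for the other, which is exactly the assertion that synchronizability is a gauge invariant property of $\omega$.

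For the final equivalence I would use the reformulation $\a$ synchronizable $\iff \sigma\a = \mbf{1}_{\mc{A}}$ for some $\sigma$, where one checks directly that $(\sigma\a)_{ij} = \sigma_i\a_{ij}\sigma_j^{-1} = \II$ is the same as $\a_{ij} = \sigma_i^{-1}\sigma_j$. If $\omega = [\gt'',\mbf{1}_{\mc{A}}]$, then the representative $\mbf{1}_{\mc{A}}$ is synchronized by the constant identity gauge, so $\omega$ is synchronizable by the first part. Conversely, if $\omega = [\gt,\a]$ is synchronizable, pick $\sigma$ with $\sigma\a = \mbf{1}_{\mc{A}}$, equivalently $\a = \sigma^{-1}\mbf{1}_{\mc{A}}$, and rewrite the class using the $\Omega$-relation:
\begin{equation}
	\omega = [\gt,\a] = [\gt,\sigma^{-1}\mbf{1}_{\mc{A}}] = [\gt\sigma^{-1},\mbf{1}_{\mc{A}}],
\end{equation}
so that $\gt'' = \gt\sigma^{-1}$ realizes the desired form.

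I expect the only real subtlety to be bookkeeping of the gauge conventions: keeping straight on which side the inverses sit in the action $(\gt\a)_{ij} = \gt_i\a_{ij}\gt_j^{-1}$ versus the synchronization relation $\a_{ij} = \sigma_i^{-1}\sigma_j$, and applying the $\Omega$-relation $[\gt\gtt,\beta]=[\gt,\gtt\beta]$ in the correct direction. Once these conventions are fixed consistently, each step is a direct substitution and no genuine obstacle remains.
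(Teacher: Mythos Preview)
Your proof is correct and follows exactly the approach the paper intends: the paper does not give an explicit proof of this proposition, but the sentence immediately preceding it records the key reformulation ($\a$ synchronizable iff $\sigma\a=\mbf{1}_{\mc{A}}$ for some $\sigma\in\Xi$) and presents the proposition as a direct consequence. Your write-up supplies precisely the bookkeeping the paper omits, with the conventions handled correctly.
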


		\begin{lemma}[{trees are synchronizable \cite[Section 3.2.1]{Gattringer:2010zz}\cite[Remark 6.2]{jiangGaugeTheoryGraphs2023}}]\label{lem:tree-synchronization}
			Let $\theta \subseteq \eta$ denote a tree and let $\a$ denote a voltage on the subgraph $\theta$. Then $\a$ is synchronizable on $\theta$.
		\end{lemma}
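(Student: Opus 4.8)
The plan is to construct a synchronizing gauge explicitly by rooting the tree and propagating a single gauge choice outward along its unique paths. First I would fix an arbitrary root vertex $r$ among the vertices of $\theta$ and set $\xi_r = \II$. Since $\theta$ is a tree, every vertex of $\theta$ is joined to $r$ by a \emph{unique} path, so each non-root vertex $i$ has a well-defined \emph{parent} $p(i)$, namely its neighbor on the path toward $r$. I would then define $\xi_i \in \SO(d)$ inductively on the graph distance from $r$ by
\[
	\xi_i \coloneqq \xi_{p(i)}\, \alpha_{p(i)\,i},
\]
where $\alpha_{p(i)\,i}$ is read off using the convention $\alpha_{kl} = \alpha_{lk}^{-1}$. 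For any vertex of $\eta$ not lying in $\theta$ I would simply set its component to $\II$; such vertices play no role, since they do not appear in any edge of $\theta$. This assembles into an element $\xi \in \Xi$.

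The well-definedness of the recursion is exactly where the tree hypothesis enters: because $\theta$ is connected and acyclic, each non-root vertex has a \emph{unique} parent and is reached exactly once, so there is no ambiguity and no competing path along which $\xi_i$ could be assigned a different value. It then remains to verify the synchronization identity $\alpha_{ij} = \xi_i^{-1}\xi_j$ on every edge of $\theta$. The key observation is that in a rooted tree every edge connects some vertex to its parent, so it suffices to check the identity on parent--child edges, and I would handle both orientations. For the edge between $p = p(c)$ and its child $c$: if $p > c$, then by construction $\xi_p^{-1}\xi_c = \alpha_{pc}$ directly; if instead $c > p$, then $\xi_c^{-1}\xi_p = (\xi_p\alpha_{pc})^{-1}\xi_p = \alpha_{pc}^{-1} = \alpha_{cp}$, using $\alpha_{pc} = \alpha_{cp}^{-1}$. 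In both cases the required relation holds, so $\xi$ is a synchronization for $\alpha$ on $\theta$.

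I expect no serious obstacle here: the content of the lemma is precisely that the cyclomatic obstruction vanishes on a tree. To make this structural point transparent, I could alternatively phrase the construction using the net voltages $\Pi_{\alpha}(\gamma)$ of Definition \ref{def:net-voltages}, setting $\xi_i = \Pi_{\alpha}(\gamma_i)$ for the unique path $\gamma_i$ from $r$ to $i$; the two descriptions agree, since the inductive formula just multiplies the edge voltages along that path. The reason a general connected graph need not be synchronizable is that two distinct paths between the same pair of vertices can carry different net voltages, the discrepancy being the holonomy around an enclosed cycle; a tree has no cycles, which is exactly what guarantees consistency of the assignment above.
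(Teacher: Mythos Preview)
Your proof is correct and follows essentially the same approach as the paper: root the tree, propagate a gauge outward from the root along the unique parent--child edges, and note that the construction is equivalently described by setting $\xi_i = \Pi_{\alpha}(\gamma_i)$ for the unique path $\gamma_i$ from the root to $i$. Your verification of the synchronization identity $\alpha_{ij}=\xi_i^{-1}\xi_j$ is in fact slightly more careful than the paper's, which checks the equivalent relation without distinguishing edge orientations.
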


		\begin{proof}
			We construct a synchronization $\gt$. Identify an arbitrary node $i_{0}$ as root of the tree and set $\gt_{i_0} = g_0$ for an arbitrary group element $g_{0} \in \SO(d)$. This will fix $\gt$ on all other nodes of the tree $\theta$ as follows. Let $i_1,\dots,i_l$ denote the children of $i_0$ and set
			\begin{equation}
				\gt_{i_{j}} = \a_{i_{j},i_0} g_0, \quad \forall j = 1, \dots, l.
			\end{equation}
			which ensures $\a_{i_{j} i_{0}} = \gt_{i_{j}} \gt_{i_0}^{-1}$. Then apply the same step to the subtrees starting at the nodes $i_1, \dots , i_{l}$ and iterate these steps until the leaves. This fixes $\gt$ on all the nodes of $\theta$ and defines a synchronization for $\a$ on $\theta$.

			An equivalent way of defining $\gt$ is to fix first $\gt_{i_{0}} = \II$ and then set $\xi_{j} = \Pi_{\a}(\gamma_{i_{0}j})$ according to Definition \ref{def:net-voltages}, where $\gamma_{i_{0}j}$ is the unique path in the tree $\eta$ that connects the two nodes $i_{0},j \in \theta$. This results in the same synchronization as above for $g_{0} = \II$.
		\end{proof}
		\begin{remark}[exact inference on acyclic graphs]
		The statement of Lemma \ref{lem:tree-synchronization} resembles various situations involving \textit{exact} inference on \textit{acyclic} graphs, via some form of `message passing'; see \cite{Aji:2000aa} for a basic reference.
		\end{remark}
		\begin{corollary}[tree gauges]\label{cor:tree-gauge}
			Let $\omega$ be a \cc and $\theta \subset \eta$ be a tree. Then there is a representation $\omega = [\gt,\a]$ such that
			\begin{equation}
				\a_{ij} = \II, \qquad \text{for all edges $i > j$ in $\theta$.}
			\end{equation}
		\end{corollary}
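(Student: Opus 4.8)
The plan is to read this off as a direct translation of Lemma \ref{lem:tree-synchronization} into the gauge invariant voltage language of $\Omega$. First I would fix an arbitrary representative $\omega = [\gt,\a]$ with $\gt \in \Xi$ and $\a \in \mc{A}$. Since $\theta \subset \eta$ is a tree, Lemma \ref{lem:tree-synchronization} supplies a synchronization $\gtt \in \Xi$ for $\a$ on $\theta$, i.e.\ an element satisfying $\a_{ij} = \gtt_i^{-1}\gtt_j$ for every edge $i>j$ in $\theta$. The entire content of the corollary is then to convert this ``synchronizability on a subgraph'' statement into a change of representative for $\omega$, so no new geometric input is needed beyond the lemma.

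Next I would apply the gauge transformation $\gtt$ to $\a$ and check that the resulting voltage is trivial on the tree. Using the action formula \eqref{def-action-Xi-A}, for any edge $i>j$ in $\theta$ one computes
\begin{equation}
	(\gtt\a)_{ij} = \gtt_i\,\a_{ij}\,\gtt_j^{-1} = \gtt_i\,(\gtt_i^{-1}\gtt_j)\,\gtt_j^{-1} = \II,
\end{equation}
so that $\a' \coloneqq \gtt\a$ assigns the identity to every tree edge, exactly the desired form.

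Finally, I would use the equivalence relation defining $\Omega$ to produce the new representation of $\omega$. From $[\gt'\gtt,\a] = [\gt',\gtt\a]$ with the choice $\gt' = \gt\gtt^{-1}$, one gets $\omega = [\gt,\a] = [\gt\gtt^{-1},\,\gtt\a] = [\gt\gtt^{-1},\a']$, and setting $\gt'' \coloneqq \gt\gtt^{-1} \in \Xi$ yields the claimed representation $\omega = [\gt'',\a']$ with $\a'_{ij} = \II$ on $\theta$. I do not expect a genuine obstacle here; the only point requiring care is the bookkeeping of \emph{which} side the group $\Xi$ acts on, so that the synchronization condition $\a_{ij} = \gtt_i^{-1}\gtt_j$ is paired with the correct action $(\gtt\a)_{ij} = \gtt_i\a_{ij}\gtt_j^{-1}$ and the equivalence $[\gt'\gtt,\a]=[\gt',\gtt\a]$ is applied in the right direction. (Note also that this is precisely the tree-case of Proposition \ref{prop:char-sync}, which identifies synchronizability of $\omega$ with the existence of a representative equal to $\mbf{1}_{\mc{A}}$; here the statement is localized to the edges of $\theta$.)
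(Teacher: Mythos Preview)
Your proposal is correct and is exactly the argument the paper has in mind: the corollary is stated immediately after Lemma~\ref{lem:tree-synchronization} without an explicit proof, and your derivation---fix a representative, synchronize on $\theta$ via the lemma, then re-gauge using the equivalence relation defining $\Omega$---is precisely the intended one-line unpacking. Your caution about matching the synchronization convention $\a_{ij}=\gtt_i^{-1}\gtt_j$ to the action $(\gtt\a)_{ij}=\gtt_i\a_{ij}\gtt_j^{-1}$ is well placed and handled correctly.
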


		\begin{figure}
			\includegraphics[width=0.45\textwidth]{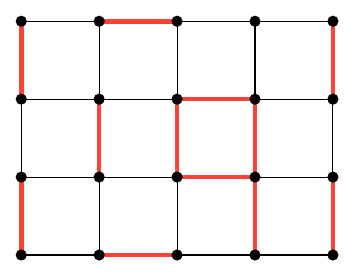}
			\caption{The edges $\veps_1$ of the complement graph of a spanning tree $\eta_{0}$ of the grid graph $\eta$, marked with red.}\label{fig:spanning}
		\end{figure}

		\begin{theorem}[spanning tree isomorphism for the nullspace]\label{thm:spanning-isom}
			Let $\eta_0$ denote a spanning tree of $\eta$ and denote by $\veps_1$ the edges of $\eta$ which are not present in the spanning tree (see Figure \ref{fig:spanning}) and let $\omega$ be a voltage on $\eta$.  Then there is a representation $\omega = [\gt,\a]$ and an induced isomorphism
			\begin{equation}
				\ker \hgen_{\vg} \cong
				\begin{cases}
					\R^{d} & \text{if }\veps_1 = \emptyset, \\
					\bigcap_{ i > j \in \veps_1}
					\E_1(\a_{ij}) &\text{else},
				\end{cases} 
			\end{equation}
			where $\E_1(\cdot)$ denotes the eigenspace of the matrix to the eigenvalue 1.
		\end{theorem}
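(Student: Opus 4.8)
The plan is to exploit the gauge invariance of the statement together with the local nullspace characterization of Lemma \ref{lem:loc-desc}, after passing to a gauge adapted to the spanning tree. The single structural input that drives everything is Corollary \ref{cor:tree-gauge}: applying it to the tree $\theta = \eta_0$ yields a representation $\omega = [\gt,\a]$ with $\a_{ij} = \II$ for every edge $i>j$ in $\eta_0$. In this gauge the tree edges carry no voltage, while the edges of $\veps_1$ retain exactly the voltage that no synchronization can remove; this is the representation asserted in the theorem.

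Next I would analyze $\phi = [\gt,\x] \in \ker \hgen_{\vg}$ using Lemma \ref{lem:loc-desc}, which gives $\x_i = \a_{ij}\x_j$ for all $i>j \in \veps$. Splitting the undirected edge set as $\veps = \veps(\eta_0) \sqcup \veps_1$, the tree constraints reduce to $\x_i = \x_j$ for every tree edge. Since $\eta_0$ is spanning and connected, transitivity forces $\x_i = v$ for a single vector $v \in \R^d$ independent of $i \in \nu$; conversely any constant assignment $\x_i \equiv v$ satisfies all tree constraints. The remaining constraints come from $\veps_1$: for each $i>j \in \veps_1$ one needs $v = \a_{ij}v$, i.e. $v \in \E_1(\a_{ij})$. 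Hence the kernel, read off in this gauge, is precisely the constant sections with common value $v \in \bigcap_{i>j \in \veps_1}\E_1(\a_{ij})$.

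I would then define the candidate isomorphism $\Phi : \ker \hgen_{\vg} \to \bigcap_{i>j \in \veps_1}\E_1(\a_{ij})$ by $\Phi([\gt,\x]) = \x_i$ (the common value, using the fixed gauge $\gt$) and check it is well-defined, linear and bijective. Well-definedness is clear because once $\gt$ is fixed the representative $\x$ is unique; linearity follows from the operations \eqref{eq:Gamma-operations} on $\Gamma$; injectivity holds since $v$ determines every $\x_i$ and hence $\phi$; surjectivity holds since any admissible $v$ produces the constant section, which lies in the kernel by the computation above. When $\veps_1 = \emptyset$ there are no constraints from off-tree edges, so the target is all of $\R^d$ and $\ker \hgen_{\vg} \cong \R^d$, the first case of the statement (consistent with the bound in Corollary \ref{cor:L-dim-ub}).

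The argument is essentially mechanical once the tree gauge is in place, so I do not expect a serious obstacle; the only points requiring care are bookkeeping. First, one should note that the intersection is orientation-independent: since $\a_{ji} = \a_{ij}^{-1}$ and $\a_{ij}v = v \iff \a_{ji}v = v$, one has $\E_1(\a_{ij}) = \E_1(\a_{ji})$, so the indexing over $\veps_1$ is unambiguous. Second, the isomorphism is \emph{induced} by the chosen representation; a different spanning tree or tree gauge conjugates the matrices $\a_{ij}$ by a fixed element of $\SO(d)$, which conjugates the intersection and therefore leaves the isomorphism type, in particular $\dim\ker\hgen_{\vg}$, unchanged.
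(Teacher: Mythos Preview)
Your proposal is correct and follows essentially the same route as the paper: pass to a tree gauge via Corollary~\ref{cor:tree-gauge}, use Lemma~\ref{lem:loc-desc} to force constancy along $\eta_0$ and the eigenvalue-$1$ constraints along $\veps_1$, then read off the isomorphism by evaluation at any node. Your write-up is in fact slightly more careful than the paper's on the bookkeeping points (orientation-independence of $\E_1(\a_{ij})$ and the gauge-dependence of the target space).
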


		\begin{proof}
			Choose the representation $[\gt,\a]$ such that
			\begin{equation}\label{eq:proof-choice-alpha-1}
			\a_{ij} = \II
			\end{equation}
			for all edges $i > j$ in $\eta_{0}$. This is possible by Corollary \ref{cor:tree-gauge}. We compute the nullspace of $L_{\chi}$ -- recall its definition \eqref{eq:def-gauged-Laplacian} -- with respect to this representation.

			 Assume $\x$ lies in the kernel of $\Lpl_{\eta,\kappa,\a}$. For any node $i \in \nu$, there is a neighbor $j$ in the spanning tree $\eta_{0}$. By Lemma \ref{lem:loc-desc}, this implies
			\begin{equation}
				\x_{i} = \a_{ij} \x_{j} = \II \x_{j} = \x_{j}.
			\end{equation}
			In view of \eqref{eq:proof-choice-alpha-1}, this means that for any node $i \in \nu$, the feature vector $\x_{i}$ must be equal to $\x_{j}$ for every neighbor $j$ of $i$ in $\eta_{0}$.
			By transitivity this implies that $\x_k = \x_i$ for all other nodes $k \in \nu$ connected to $i$ via a path inside $\eta_{0}$.
			Since $\eta_0$ is a spanning tree, any two nodes are connected by a path in $\eta_{0}$. This means that $\x_i = \x_0$ for some $\x_{0} \in \R^d$, for all $i \in \nu$.

			There are additional requirements imposed by the edges $i > j \in \veps_{1}$. For all these edges we have
			\begin{equation}
				\x_0 = \a_{ij} \x_0, \quad \forall i > j \in \veps_{1},
			\end{equation}
			which is the eigenvalue equation for $\a_{ij}$ with eigenvalue 1. This means that the vector $\x_0$ lies in the subspace
			\begin{equation}
				\x_0 \in \bigcap_{i > j \in \veps_1} \E_1 (\a_{ij}).
			\end{equation}

			The concrete isomorphism then looks as follows. Pick any node $k \in \nu$ and define
			\begin{equation}
				\ker \hgen_{\vg} \to \R^d, \quad \phi = [\gt,\x ] \mapsto \x_k.
			\end{equation}
			This mapping is independent of the chosen node $k$ since the node features are constant across the vertices inside the kernel, when we choose the gauge $\xi$. In words: transform $\phi$ to the right gauge $\gt$ and pick the $k$-th row of the feature matrix $x$. This linear mapping is injective as shown by the arguments above and its image is $\R^{d}$ if $\veps_1 = \emptyset$ and $\bigcap_{i > j \in \veps_1} E_1(\a_{ij})$ else.
		\end{proof}

			The following corollary was also noted in \cite[Proposition 2.1]{gaoGeometrySynchronizationProblems2019} and \cite[Lemma 6]{bodnarNeuralSheafDiffusion2023}.
		
		\begin{corollary}\label{cor:synchronizable}
			$\omega$ is synchronizable on $\eta$ if and only if $\dim \ker \hgen_{\vg} = d$.
		\end{corollary}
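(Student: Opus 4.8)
The plan is to deduce the corollary almost entirely from the spanning tree isomorphism of Theorem \ref{thm:spanning-isom}, combined with the characterization of synchronizability in Proposition \ref{prop:char-sync}. First I would fix a spanning tree $\eta_0$, let $\veps_1$ be the non-tree edges, and work throughout in a tree-trivializing representation $\omega = [\gt,\a]$ with $\a_{ij} = \II$ for all tree edges, whose existence is guaranteed by Corollary \ref{cor:tree-gauge}. Theorem \ref{thm:spanning-isom} then identifies $\ker \hgen_{\vg}$ with $\R^d$ when $\veps_1 = \emptyset$ and with $\bigcap_{i>j\in\veps_1}\E_1(\a_{ij})$ otherwise. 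The one elementary fact I would isolate is that for $\a_{ij}\in\SO(d)$ one has $\E_1(\a_{ij}) = \R^d$ if and only if $\a_{ij} = \II$, and that an intersection of linear subspaces of $\R^d$ equals $\R^d$ exactly when each factor does.

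I would first dispose of the degenerate case $\veps_1 = \emptyset$: here $\eta = \eta_0$ is itself a tree, so $\omega$ is synchronizable by Lemma \ref{lem:tree-synchronization}, while $\ker \hgen_{\vg}\cong \R^d$ gives $\dim\ker\hgen_{\vg} = d$; both sides of the equivalence then hold unconditionally. For the forward direction with $\veps_1\neq\emptyset$, suppose $\omega$ is synchronizable. By Proposition \ref{prop:char-sync} I may take the representative $\a = \mbf{1}_{\mc{A}}$, which is trivial on \emph{every} edge and hence in particular tree-trivializing, so it is an admissible choice in Theorem \ref{thm:spanning-isom}. Then each $\a_{ij} = \II$ on $\veps_1$, so $\bigcap_{i>j\in\veps_1}\E_1(\a_{ij}) = \R^d$ and the isomorphism yields $\dim\ker\hgen_{\vg} = d$.

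For the converse, assume $\dim\ker\hgen_{\vg} = d$. Through the isomorphism of Theorem \ref{thm:spanning-isom} this forces $\bigcap_{i>j\in\veps_1}\E_1(\a_{ij})$ to be a $d$-dimensional subspace of $\R^d$, hence all of $\R^d$. By the elementary observation above, each $\E_1(\a_{ij}) = \R^d$, so $\a_{ij} = \II$ for every $i>j\in\veps_1$. Combined with $\a_{ij} = \II$ on the tree edges in the chosen representation, this gives $\a = \mbf{1}_{\mc{A}}$, i.e. $\omega = [\gt,\mbf{1}_{\mc{A}}]$, which is synchronizable by Proposition \ref{prop:char-sync}.

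I do not expect a genuine obstacle here, since the statement is essentially a reading-off of Theorem \ref{thm:spanning-isom}. The only points requiring care are keeping the entire argument inside a single tree-trivializing gauge, so that the synchronizability conclusion $\a = \mbf{1}_{\mc{A}}$ can be obtained merely by combining the non-tree edges with the already-trivial tree edges, and treating $\veps_1 = \emptyset$ separately so that the second branch of the isomorphism is never invoked on an empty intersection.
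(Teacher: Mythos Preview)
Your proposal is correct and follows essentially the same approach as the paper: both arguments pass to a tree-trivializing gauge via Corollary~\ref{cor:tree-gauge}, invoke the spanning tree isomorphism of Theorem~\ref{thm:spanning-isom}, and use the elementary fact that $\E_1(\a_{ij})=\R^d$ forces $\a_{ij}=\II$ to conclude $\a=\mbf{1}_{\mc{A}}$ and hence synchronizability via Proposition~\ref{prop:char-sync}. Your write-up is in fact a bit more careful than the paper's, which handles only the backward direction explicitly and then asserts ``the converse statement follows by stating the arguments in reverse order,'' whereas you spell out both directions and the degenerate case $\veps_1=\emptyset$ separately.
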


		\begin{proof}
			Assume $\dim \ker \hgen_{\vg} = d$ and let $\veps_1$ denote again the complement of the set of edges of a spanning tree $\eta_0 \subset \eta$, with tree gauge representation $\omega=[\gt,\a]$. 
			This implies that $\alpha_{ij} = \II$ on all edges of spanning tree $i > j$ in $\eta_{0}$. 
			If $\veps_{1} = 0$, then $\omega$ is synchronizable by Proposition \ref{lem:tree-synchronization}. Else, we have by assumption,
			\begin{equation}
				\dim \bigcap_{i > j \in \veps_{1}} \E_{1}(\a_{ij}) = d.
			\end{equation}
			Since the dimension of the intersection of the eigenspaces is $d$, we infer that $E_{1}(\a_{ij}) = \R^{d}$ for all edges in $\eta_1$, which implies $\a_{ij}=\II$ for all $i>j$ in $\eta_{1}$. We conclude that $\a_{ij}=\II$ on all edges $i>j$ in the entire graph $\eta$ and hence that $\alpha$ is synchronizable by Proposition \ref{prop:char-sync}. 
			The converse statement follows by stating the arguments in reverse order.
		\end{proof}

		For the next corollary, we recall our assumption that the graph $\eta$ is connected, i.e. has only one connected component, and $N = |\nu|$. In order to state the next result, we note that we can interpret gauges $\xi \in \Xi$ as linear mappings $X \to X$ by the group action of $\Xi$ on $X$. We furthermore identify $X = \R^{N \times d}$ with the tensor product $\R^{N}\otimes\R^{d}$.

		\begin{corollary}[tensor decomposition via synchronizability]\label{cor:tensor-decomp}
			Let $\omega = [\gt,\a]$ denote a graph voltage and $\Lpl_{\eta,\kappa,\alpha}$ a representation of $L_{\vg}$ with respect to the gauge $\gt$, according to Definition \ref{def:ginv-k}. Then $\omega$ is synchronizable if and only if there is a gauge $\xi \in \Xi$, a matrix $R \in \o{SO}(N)$ and a diagonal matrix $\Lambda \in \R^{N \times N}$ such that
			\begin{equation}\label{eq:decomp}
				\Delta_{\eta,\kappa,\alpha} = \xi^{-1} (R \Lambda R^{\top} \otimes \II) \xi,
			\end{equation}
			as operators $X \to X$, and there is a unique $i \in \{1,\dots,N\}$ with $\lambda_i \coloneqq \Lambda_{ii} = 0$ and $\lambda_j \coloneqq \Lambda_{jj} > 0$ for all $j \neq i$.
		\end{corollary}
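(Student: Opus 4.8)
The plan is to prove the equivalence by passing to a gauge in which the voltage becomes trivial, where $L_{\chi}$ reduces to an ordinary scalar graph Laplacian acting diagonally across the $d$ feature channels; from that representation both the tensor factorization \eqref{eq:decomp} and the prescribed eigenvalue pattern can be read off, while the converse follows cheaply from a kernel-dimension count. Concretely, for the forward direction I would start from synchronizability: by Definition~\ref{def:synchronization} and Proposition~\ref{prop:char-sync} the representative $\alpha$ of $\omega$ is itself synchronizable, so there is $\tau\in\Xi$ with $\tau_i\alpha_{ij}\tau_j^{-1}=\II$ on every edge, i.e.\ $\tau\alpha=\mbf{1}_{\mc A}$. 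Substituting $\alpha_{ij}=\II$ into \eqref{eq:def-lalpha} shows that in this gauge the operator is the random-walk normalized graph Laplacian acting trivially on the feature index, $\Delta_{\eta,\kappa,\mbf{1}}=\Delta_{\eta,\kappa}\otimes\II$, under the identification $X=\R^{N}\otimes\R^{d}$. I would then invoke the gauge-covariance identity $\Delta_{\eta,\kappa,\tau\alpha}=\tau\,\Delta_{\eta,\kappa,\alpha}\,\tau^{-1}$ established in the proof that $L_{\chi}$ is well defined; solving for $\Delta_{\eta,\kappa,\alpha}$ and setting $\xi=\tau$ yields $\Delta_{\eta,\kappa,\alpha}=\xi^{-1}\big((\Delta_{\eta,\kappa})\otimes\II\big)\xi$, which is exactly \eqref{eq:decomp} once the scalar factor is diagonalized.

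To complete the forward direction I would diagonalize $\Delta_{\eta,\kappa}$ as $R\Lambda R^{\top}$ and pin down its spectrum. Writing $D=\o{diag}(Z_i)$ and $W=(\kappa_{ij})$, one has $\Delta_{\eta,\kappa}=I-D^{-1}W$, which is self-adjoint with respect to the degree inner product induced by \eqref{eq:def-bracket-z} on the node factor $\R^{N}$; hence its eigenvalues are real and it admits an orthogonal diagonalization $R\Lambda R^{\top}$, where $\det R=+1$ may be arranged by flipping a column so that $R\in\SO(N)$, and $\Lambda$ collects the eigenvalues. Connectedness of $\eta$ then supplies the required pattern: the constant vector spans the kernel and is, up to scale, the unique eigenvector for the eigenvalue $0$, while all remaining eigenvalues are strictly positive. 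This produces exactly one index $i$ with $\lambda_i=0$ and $\lambda_j>0$ for $j\neq i$.

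For the converse I would argue purely by dimension. Assume \eqref{eq:decomp} holds with the stated spectrum. Since $\xi$ is invertible, conjugation preserves kernels, so $\dim\ker\Delta_{\eta,\kappa,\alpha}=\dim\ker\big((R\Lambda R^{\top})\otimes\II\big)$. The matrix $R\Lambda R^{\top}$ is similar to $\Lambda$ and hence has a one-dimensional kernel, and the identity $\ker(M\otimes\II)=\ker M\otimes\R^{d}$ gives $\dim\ker\big((R\Lambda R^{\top})\otimes\II\big)=d$. Therefore $\dim\ker L_{\chi}=d$, and Corollary~\ref{cor:synchronizable} immediately identifies this with synchronizability of $\omega$, closing the equivalence.

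The step I expect to require the most care is the orthogonal diagonalization with $R\in\SO(N)$. The random-walk normalized Laplacian $\Delta_{\eta,\kappa}=I-D^{-1}W$ is not symmetric in the standard Euclidean structure, so the factorization $R\Lambda R^{\top}$ must be understood relative to the degree-weighted inner product $\lla\cdot,\cdot\rra_{Z}$ under which $L_{\chi}$ and its scalar shadow are self-adjoint; equivalently, one conjugates by $D^{1/2}$ to the symmetric normalized Laplacian $I-D^{-1/2}WD^{-1/2}$, diagonalizes that genuinely orthogonally, and transports the decomposition back. Keeping the notion of orthogonality consistent with this metric rather than the naive Euclidean one is the delicate point of the argument; once it is fixed, the spectral consequences of connectedness used above are standard.
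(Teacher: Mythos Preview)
Your proposal is correct and follows essentially the same route as the paper: for the direction synchronizable $\Rightarrow$ decomposition, both you and the paper conjugate by the synchronizing gauge to reduce $\Delta_{\eta,\kappa,\alpha}$ to $\Delta_{\eta,\kappa}\otimes\II$ and then diagonalize the scalar Laplacian; for the direction decomposition $\Rightarrow$ synchronizable, both arguments boil down to the kernel-dimension count $\dim\ker\big((R\Lambda R^{\top})\otimes\II\big)=d$ followed by Corollary~\ref{cor:synchronizable}. Your treatment of the latter is slightly more economical (you invoke $\ker(M\otimes\II)=\ker M\otimes\R^{d}$ directly, whereas the paper writes down the explicit isomorphism $v\mapsto\xi^{-1}(r_{i}\otimes v)$ and verifies injectivity and surjectivity by hand), but the content is identical. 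Your closing remark on the orthogonality of $R$ is well taken: the paper simply asserts that $\Delta_{\eta,\kappa}=R\Lambda R^{\top}$ with $R\in\mathrm{O}(N)$ without commenting on the fact that the random-walk normalized Laplacian is self-adjoint only with respect to the degree-weighted inner product $\lla\cdot,\cdot\rra_{Z}$, so your suggested fix via conjugation by $D^{1/2}$ (or interpreting orthogonality relative to that inner product) is exactly what is needed to make the statement precise.
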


		\begin{proof}
			First assume that $\Delta_{\eta,\kappa,\alpha}$ admits a decomposition as defined in \eqref{eq:decomp}. We then prove that $\dim \ker \Delta_{\eta,\kappa,\alpha} = d$, which shows synchronizability by Corollary \ref{cor:synchronizable}.
			A linear isomorphism $\R^{d} \to \ker \Delta_{\eta,\kappa,\alpha}$ can be specified as follows. Denote by $r_i$ the $i$-th column of the matrix $R$. Then we consider the mapping
			\begin{equation}\label{eq:proof-decomp-isom}
				\R^{d} \to \ker \Delta_{\eta,\kappa,\alpha}\; ,
				\quad
				v \mapsto \xi^{-1}(r_{i} \otimes v).
			\end{equation}
			This mapping is linear and injective. It is well-defined in the sense that it maps into the kernel of $\Delta_{\eta,\kappa,\alpha}$
			\begin{subequations}
			\begin{align}
				\Delta_{\eta,\kappa,\alpha}\xi^{-1}(r_{i}\otimes v)
				&=
				\xi^{-1} (R \Lambda R^{\top} \otimes \II) \xi\xi^{-1}(r_{i}\otimes v) \\
				&=
				\lambda_i \xi^{-1}(r_i \otimes v) \\
				\overset{\lambda_i = 0}&{=} 0,
			\end{align}
			\end{subequations}
			where the second equation holds since $R^{\top} r_i$ is a vector with 1 in the $i$-th entry and 0 else, and $R \in \o{SO}(N)$. It remains to show surjectivity. Note that $\Delta_{\eta,\kappa,\alpha}$ and $R \Lambda R^{\top}\otimes \II$ are similar as linear operators. Thus, they have isomorphic kernels, with isomorphism given by
			\begin{equation}
				\ker \Delta_{\eta,\kappa,\alpha}\to \ker (R \Lambda R^{\top}\otimes \II), \quad
				x \mapsto \xi x.
			\end{equation}
			We first show surjectivity for the mapping
			\begin{equation}\label{eq:simple-iso}
				\R^{d} \to \ker (R \Lambda R^{\top}\otimes \II), \quad
				v \mapsto r_{i} \otimes v,
			\end{equation}
			from which surjectivity of \eqref{eq:proof-decomp-isom} follows. The mapping \eqref{eq:simple-iso} is surjective due to
			\begin{equation}
				\ker(R \Lambda R^{\top} \otimes \II) = \ker (R \Lambda R^{\top} )\otimes \R^{d},
			\end{equation}
			and the fact that $\ker (R \Lambda R^{\top})$ is the span of the vector $r_i$ by the assumptions of the Corollary. We conclude that \eqref{eq:proof-decomp-isom} is an isomorphism and hence that $\dim \ker \Delta_{\eta,\kappa,\alpha} = d$.

			Conversely, assume that $\a$ is synchronizable with synchronization $\gt$. Then $\a_{ij} = \gt_{i}\gt_{j}^{-1}$ by Definition \eqref{def:synchronization} and furthermore by \eqref{eq:def-lalpha},
			\begin{equation}
				\big(\Lpl_{\eta,\kappa,\a}(\x)\big)_i = - \gt_i \sum_{i,j \in \nu} \frac{\kappa_{ij}}{Z_i} ( \gt_j^{-1} \x_j - \gt_{i}^{-1} \x_i ).
			\end{equation}
			This yields the explicit expression
			\begin{equation}\label{eq:similariy-delta-k}
				\Lpl_{\eta,\kappa,\a} = \gt (\Delta_{\eta,\kappa} \otimes \II) \gt^{-1} :X \to X,
			\end{equation}
			where $\Delta_{\eta,\kappa}$ denotes the weighted graph Laplacian on $\eta$ with weights $\kappa$ and random walk normalization given by Equation \eqref{eq:def-y-diag}.
			The Laplacian $\Delta_{\eta,\kappa}=R\Lambda R^{\top}$ can be diagonalized with a matrix $R  \in \rm{O}(N)$ and eigenvalues $\lambda_{k}=\Lambda_{kk} \geq 0$, where exactly one of the eigenvalues is 0 due to the assumption that the graph $\eta$ is connected. Thus
			\begin{equation}
				\Lpl_{\eta,\kappa,\a} = \gt (R \Lambda R^\top \otimes \II) \gt^{-1}.
			\end{equation}
		\end{proof}

		\begin{corollary}[spectra of synchronizable heat kernels]
			Let $\omega = [\gt,\a]$ denote a synchronizable graph voltage.
			Then the spectrum of $\hgen_{\chi}$ is equal to the spectrum of the weighted graph Laplacian $\Delta_{\eta,\kappa}$. Furthermore, the eigenspaces $E_{\lambda}(\hgen_{\vg})$ are isomorphic to $E_{\lambda}(\Delta_{\eta,\kappa}) \otimes \R^{d}$.
		\end{corollary}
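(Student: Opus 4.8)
The plan is to reduce the statement to the similarity relation \eqref{eq:similariy-delta-k} obtained in the proof of Corollary \ref{cor:tensor-decomp}, combined with the elementary spectral theory of Kronecker products. First I would recall that synchronizability of $\omega=[\gt,\a]$ produces a gauge $\gt$ with $\a_{ij}=\gt_i\gt_j^{-1}$, for which \eqref{eq:similariy-delta-k} gives the factorization $\Lpl_{\eta,\kappa,\a}=\gt\,(\Delta_{\eta,\kappa}\otimes\II)\,\gt^{-1}$ as operators on $\X=\R^{N}\otimes\R^{d}$. Since $\hgen_{\vg}\colon\Gamma\to\Gamma$ acts by $[\gt,\x]\mapsto[\gt,\Lpl_{\eta,\kappa,\a}\x]$ and $\Gamma\cong\X$ via the gauge isomorphism $\Sigma_{\gt}$ from the proof of Proposition \ref{prop:g-inv-trafos}, the operator $\hgen_{\vg}$ and its representation $\Lpl_{\eta,\kappa,\a}$ have identical spectra and isomorphic eigenspaces. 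This reduces the problem to computing the spectrum of $\Lpl_{\eta,\kappa,\a}$ on $\X$.

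Next I would exploit the conjugation by $\gt\in\GL(\X)$ appearing in \eqref{eq:similariy-delta-k}: similarity implies that $\Lpl_{\eta,\kappa,\a}$ and $\Delta_{\eta,\kappa}\otimes\II$ share their spectrum, and that $v\mapsto\gt^{-1}v$ carries the eigenspaces of $\Delta_{\eta,\kappa}\otimes\II$ isomorphically onto those of $\Lpl_{\eta,\kappa,\a}$. It then remains to read off the spectrum of $\Delta_{\eta,\kappa}\otimes\II$, where I would invoke the standard Kronecker fact: if $\lambda$ is an eigenvalue of $\Delta_{\eta,\kappa}$ with eigenspace $E_{\lambda}(\Delta_{\eta,\kappa})$, then $\Delta_{\eta,\kappa}\otimes\II$ has exactly the same eigenvalues $\lambda$, each with eigenspace $E_{\lambda}(\Delta_{\eta,\kappa})\otimes\R^{d}$ (the $\R^{d}$-factor contributes the eigenvalue $1$ throughout, so each geometric multiplicity is multiplied by $d$). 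Chaining the gauge isomorphism with the similarity isomorphism then yields both claims: equality of the spectra as sets, and $E_{\lambda}(\hgen_{\vg})\cong E_{\lambda}(\Delta_{\eta,\kappa})\otimes\R^{d}$.

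I do not expect a genuine obstacle, since the corollary is essentially immediate from \eqref{eq:similariy-delta-k}; it is a spectral restatement of the tensor decomposition already established. The only point deserving care is the bookkeeping of the two linear isomorphisms involved — the gauge identification $\Gamma\cong\X$ and the conjugation by $\gt$ — so that the final eigenspace isomorphism is asserted between the correct spaces. As both are honest linear isomorphisms, their composition delivers $E_{\lambda}(\hgen_{\vg})\cong E_{\lambda}(\Delta_{\eta,\kappa})\otimes\R^{d}$, and the equality of spectra follows \emph{a fortiori}.
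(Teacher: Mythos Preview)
Your proposal is correct and matches the paper's own proof essentially verbatim: both invoke the similarity relation \eqref{eq:similariy-delta-k} from the proof of Corollary \ref{cor:tensor-decomp} to conclude that $\Delta_{\eta,\kappa,\alpha}$ and $\Delta_{\eta,\kappa}\otimes\II$ share spectra and have isomorphic eigenspaces, and then read off the spectrum and eigenspace structure of the Kronecker product. Your write-up is in fact slightly more careful than the paper's, in that you make explicit the passage from $\hgen_{\vg}$ on $\Gamma$ to its representation $\Delta_{\eta,\kappa,\alpha}$ on $\X$ via the gauge isomorphism $\Sigma_{\gt}$.
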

		\begin{proof}
			Let $\Delta_{\eta,\kappa,\alpha}$ denote a representation of $\hgen_{\chi}$. From  Equation \eqref{eq:similariy-delta-k} in the proof of Corollary \ref{cor:tensor-decomp}, we deduce that $\Delta_{\eta,\kappa,\alpha}$ and $\Delta_{\eta,\kappa} \otimes \II$ are \textit{similar} as linear operators and thus share a spectrum and have isomorphic eigenspaces. The claim then follows from the observation that the spectrum of $\Delta_{\eta,\kappa} \otimes \II$ is equal to the spectrum of $\Delta_{\eta,\kappa}$ and that the eigenspaces of $\Delta_{\eta,\kappa} \otimes \II$ are the tensor product of the eigenspaces of  $\Delta_{\eta,\kappa}$ and $\R^{d}$.
		\end{proof}

\section{Yang Mills Curvature Analysis}
\label{sec:ym}

Both Theorem \ref{thm:spanning-isom} and Corollary \ref{cor:tensor-decomp} show that the synchronizability of a voltage $\omega$ is central to the transformation properties of the heat kernel $K_{\vg}$. In this section, we analyze the synchronizability of graph voltages $\omega$ independently of heat kernels.
We provide here an analysis of synchronizability in terms of \textit{Yang-Mills} energies, which are an important tool in the theory of smooth vector bundles. Our exposition closely relates to the work \cite{gaoGeometrySynchronizationProblems2019} in that we also link synchronization properties to holonomy properties of graph voltages. While \cite{gaoGeometrySynchronizationProblems2019} analyzes synchronizability in terms of \textit{synchronization bundles}, we choose here geometric gauge theory as the vantage point which allows us to extend the analysis \textit{beyond flat voltages}.

\subsection{Holonomy for Graph Voltages}
    We define holonomy for a discrete \cc $\a$ on a graph $\eta$.

	\begin{definition}[loops of a graph $\eta$]
		A \textit{loop} of $\eta$ is an ordered sequence $l = (i_0,\dots,i_m)$ of vertices $i_{j} \in \nu$ such that $i_{m}=i_{0}$, $i_{j} \sim i_{j+1}$ for all $j \in \{0,\dots,m - 1\}$ and $i_{j} \neq i_{j'}$ if $j \neq j'$. The vertex $i_{0}$ is called the \textit{base point} of $l$. We denote by $\Lps_{+}(\eta)$ the set of loops of $\eta$.
	\end{definition}

	We define an equivalence relation on the space of loops as follows. Two loops $l,l'$ are \textit{equivalent} if the ordered sequence $l'$ can be obtained from the sequence $l$ by applying a cyclic permutation and, if necessary, a permutation that reverses the order.

	\begin{definition}[cycles of $\eta$]
		A \textit{cycle} of $\eta$ is an equivalence class of loops $\ell$ under the relation defined above. We denote by $\Lps(\eta)$ the set of cycles of $\eta$.
	\end{definition}

	\begin{definition}[holonomy]\label{def:holonomy}
		The \textit{holonomy} of a voltage $\a \in \mc{A}$ along a loop $l\in \Lps_{+}(\eta)$ with $l = (i_{0},\dots,i_{m})$ is defined as
		\begin{equation}
			\Hol_{\a}(l) \coloneqq \prod_{k=1}^m \a_{i_{k - 1} i_{k}}.
		\end{equation}
		We define the \textit{scalar holonomy} as
		\begin{equation}
			\mfk{h}_\a(l) \coloneqq \tr(2 \II - \Hol_{\a}(l) - \Hol_{\a}(l)^{\top}).
		\end{equation}
	\end{definition}
	The holonomy $\Hol_{\a} : \Lps_+(\eta) \to \SO(d)$ is well-defined for any loop $l$, yet depends on the choice of a base point and orientation. The scalar holonomy $\mfk{h}_\a\colon \Lps(\eta) \to\mathbb{R}_{+}$, on the other hand, neither depends on the base point nor on the orientation of a loop $l$, hence is well-defined for any cycle $\ell$.

	\begin{proposition}[scalar holonomy does not depend on base point]\label{prop:scalar-hol-basepoint}
		If $l,l' \in \Lps_+(\eta)$ denote two equivalent oriented loops, then
		\begin{equation}
			\hol_{\a}(l) = \hol_{\a}(l').
		\end{equation}
		The scalar holonomy is well-defined as a function $\hol_{\a}: \Lps(\eta) \to \R_+$
	\end{proposition}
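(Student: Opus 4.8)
The plan is to reduce the claim to invariance of $\hol_\a$ under the two elementary moves that generate the equivalence relation on loops — cyclic permutation of the base point and reversal of orientation — and to exploit that, since $\Hol_\a(l)\in\SO(d)$, one has $\Hol_\a(l)^\top=\Hol_\a(l)^{-1}$. Writing $H\coloneqq\Hol_\a(l)$, this lets me recast the scalar holonomy as
\begin{equation}
	\hol_\a(l)=\tr(2\II-H-H^\top)=2d-\tr(H)-\tr(H^{-1}),
\end{equation}
so that $\hol_\a(l)$ depends on $l$ only through the two scalars $\tr(H)$ and $\tr(H^{-1})$. It then suffices to track how these two traces behave under each move.

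For a cyclic shift of the base point, I would write $H=A_1A_2\cdots A_m$ with $A_k\coloneqq\a_{i_{k-1}i_k}$ and observe that advancing the base point from $i_0$ to $i_1$ replaces $H$ by $A_2\cdots A_mA_1=A_1^{-1}HA_1$. Thus a single shift conjugates the holonomy, and since a general cyclic permutation is a composition of such shifts, the holonomy of any cyclically shifted loop equals $gHg^{-1}$ for some $g\in\SO(d)$. Because the trace is conjugation invariant, both $\tr(H)$ and $\tr(H^{-1})=\tr(gH^{-1}g^{-1})$ are unchanged, and hence so is $\hol_\a$.

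For orientation reversal, I would use $\a_{ij}=\a_{ji}^{-1}$ to see that traversing $l=(i_0,\dots,i_m)$ backwards multiplies the inverse edge voltages in reverse order, giving $\Hol_\a(l^{\mathrm{rev}})=A_m^{-1}\cdots A_1^{-1}=(A_1\cdots A_m)^{-1}=H^{-1}$. The scalar holonomy is then $2d-\tr(H^{-1})-\tr(H)$; that is, $\tr(H)$ and $\tr(H^{-1})$ merely swap, which leaves the symmetric combination $\tr(H)+\tr(H^{-1})$ — and therefore $\hol_\a$ — invariant. Since any two equivalent loops differ by finitely many cyclic shifts together with at most one reversal, combining the two computations shows $\hol_\a(l)=\hol_\a(l')$ whenever $l,l'$ are equivalent, so $\hol_\a$ descends to a well-defined function on $\Lps(\eta)$.

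The only genuinely delicate point is the index bookkeeping: obtaining the correct conjugating factor in the cyclic shift (which hinges on the left-to-right ordering convention in the product defining $\Hol_\a$) and correctly reversing the edge labels through $\a_{ij}=\a_{ji}^{-1}$. To justify the claimed codomain $\R_+$, I would additionally record the identity $2\II-H-H^\top=(\II-H)(\II-H)^\top$, which holds because $HH^\top=\II$; this yields $\hol_\a(l)=\tr\big((\II-H)(\II-H)^\top\big)\ge0$ and confirms that $\hol_\a$ indeed takes values in $\R_+$.
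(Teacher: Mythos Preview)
Your proof is correct and follows essentially the same line as the paper's: cyclic shifts of the base point conjugate $\Hol_\a(l)$ (trace invariance), orientation reversal inverts it (handled by the symmetrization $H+H^\top$), and non-negativity follows from $2\II-H-H^\top$ being positive semidefinite. The only cosmetic difference is that the paper argues non-negativity via the spectral range of $H+H^\top\in[-2,2]$ for $H\in\SO(d)$, whereas your factorization $2\II-H-H^\top=(\II-H)(\II-H)^\top$ is a slightly slicker way to reach the same conclusion.
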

	\begin{proof}
		The two $\Hol_{\a}(l)$ and $\Hol_{\a}(l')$ can be obtained from each other by composing conjugation and inversion operations, since the same loop is traversed, but with different ordering and maybe different orientation.
		A different ordering results in a conjugation operation on $\Hol_{\a}(l)$. Reversing an orientation is equivalent to inverting $\Hol_{\a}(l)$.
		As for the scalar holonomy, we symmetrize $\Hol_{\a}(l)$ and take the trace. Due to symmetrization, the scalar holonomy becomes invariant under inversion, which amounts to transposition of $\Hol_{\a}(l) \in \SO(d)$. Due to taking the trace, $\hol_{\a}(l)$ is invariant with respect to conjugation, which results from changing the base point.

		The scalar holonomy maps to $\R_{+}$, since the matrices $\Hol_{\a}(l)$ lie in $\SO(d)$ such that $\Hol_{\a}(l) + \Hol_{\a}(l)^{\top}$ has its spectral range in $[-2,2]$. Consequently, the matrix $2 \II - \Hol_{\a}(l) - \Hol_{\a}(l)^{\top}$ is symmetric and positive semi-definite with spectral range $[0,4]$, which the trace maps to the interval $[0,4d]$.
	\end{proof}

	\begin{proposition}[scalar holonomy completely characterizes holonomy]\label{prop:scalar-hol-hol}
		We have
		\begin{equation}
			\Hol_{\a}(l) = \II \quad\iff\quad
			\hol_{\a}(l) = 0 .
		\end{equation}
	\end{proposition}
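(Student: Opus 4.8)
The plan is to reduce the scalar holonomy to the squared Frobenius norm of $\II - \Hol_{\a}(l)$, after which the equivalence becomes an immediate norm-vanishing statement. Writing $A \coloneqq \Hol_{\a}(l) \in \SO(d)$, the first step is to exploit the defining orthogonality relation $A^{\top}A = \II$ in order to establish the algebraic identity
\begin{equation}
	(\II - A)^{\top}(\II - A) = \II - A - A^{\top} + A^{\top}A = 2\II - A - A^{\top},
\end{equation}
so that the symmetric matrix appearing inside the trace in Definition \ref{def:holonomy} is exactly the Gram matrix $(\II - A)^{\top}(\II - A)$. This is the only place where the special orthogonal structure enters, and it is precisely what collapses the cross terms.

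Taking traces then yields
\begin{equation}
	\hol_{\a}(l) = \tr\big((\II - A)^{\top}(\II - A)\big) = \n{\II - A}_{F}^{2},
\end{equation}
the squared Frobenius norm of $\II - A$. As a byproduct, this re-derives the non-negativity of $\hol_{\a}$ already recorded in Proposition \ref{prop:scalar-hol-basepoint}, but now in a form that makes the characterization transparent: a squared norm vanishes if and only if the underlying matrix is zero. Hence $\hol_{\a}(l) = 0$ holds precisely when $\II - A = 0$, i.e.\ when $\Hol_{\a}(l) = A = \II$, which is exactly the claimed equivalence in both directions.

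I do not anticipate any genuine obstacle here; the proof is elementary once the orthogonality identity above is in place. The only subtlety worth stating explicitly is that $A^{\top}A = \II$ is what turns $2\II - A - A^{\top}$ into a perfect square $(\II - A)^{\top}(\II - A)$. An alternative route would diagonalize $A$ over $\C$ into rotation blocks and argue via $\tr A \le d$ with equality iff every eigenvalue equals $1$, but the Frobenius-norm identity is shorter and avoids any spectral bookkeeping, so I would present that version.
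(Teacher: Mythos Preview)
Your proof is correct and is in fact cleaner than the paper's. The key difference is that you factorize $2\II - A - A^{\top}$ as $(\II - A)^{\top}(\II - A)$ via $A^{\top}A=\II$, which simultaneously gives positive semidefiniteness and the identification $\hol_{\a}(l)=\n{\II - A}_{F}^{2}$, so the equivalence is a one-line norm-vanishing argument. The paper instead first invokes the previously established positive semidefiniteness of $2\II - A - A^{\top}$ to conclude that trace zero forces the matrix itself to vanish, then multiplies $2\II = A + A^{\top}$ by $A$ to obtain the polynomial relation $(A-\II)^{2}=0$, and finally appeals to diagonalizability of $A$ to conclude $A=\II$. Your route bypasses the polynomial and diagonalizability step entirely; the paper's route, while slightly longer, makes the eigenvalue structure explicit. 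Either is fine, and you correctly note that the spectral argument is the natural alternative.
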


	\begin{proof}
		\renewcommand\leo\sigma
		If $\Hol_{\a}(l) = \II$, then $\hol_{\a}(l) = 2d - d - d = 0$.

		If we have $\hol_{\a}(l) = 0$, then the positive-semidefinite symmetric matrix $2 - \Hol_{\a}(l) - \Hol_{\a}(l)^{\top}$ has trace 0, which implies that the matrix itself is 0. Therefore, multiplying the equation $2 = \Hol_{\a}(l) + \Hol_{\a}(l)^{\top}$ from the left by $\Hol_{\a}(l)$ and rearranging gives
		\begin{equation}\label{eq:hol-poly}
			\Hol_{\a}(l)^{2} - 2\Hol_{\a}(l) + \II = 0.
		\end{equation}
		Any eigenvalue $\leo$ of $\Hol_{\a}(l)$ therefore satisfies the polynomial equation $\leo^{2} - 2 \leo + 1 = (\leo - 1)^{2} = 0$, that is $\leo = 1$ must hold. Thus, $\Hol_{\a}(l)$ is a diagonalizable matrix with all eigenvalues equal to 1 and hence $\Hol_{\a}(l) = \II$.
	\end{proof}

	\begin{proposition}[gauge transform of holonomy]\label{prop:ginv-hol}
		If $\gt \in \Xi$ denotes a gauge transformation, then we have the property
		\begin{equation}\label{eq:Hol-gauge}
			\Hol_{\gt \a} (l) = \gt_{i_0}^{-1} \Hol_{\a}(l) \gt_{i_{0}},
		\end{equation}
		where $i_0$ is the base point for $l \in \Lps_{+}(\eta)$ and
		\begin{equation}\label{eq:hol-gauge}
			\hol_{\a}(l) = \hol_{\gt \a}(l).
		\end{equation}
	\end{proposition}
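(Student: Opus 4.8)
The plan is to establish the conjugation formula \eqref{eq:Hol-gauge} by a direct telescoping computation and then to read off the scalar invariance \eqref{eq:hol-gauge} as an immediate consequence, using that the trace is a conjugation invariant and that gauge elements are orthogonal.

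First I would expand the holonomy of the transformed voltage straight from its definition. Writing $l = (i_0,\dots,i_m)$ and substituting the action \eqref{def-action-Xi-A}, that is $(\gt\a)_{i_{k-1}i_k} = \gt_{i_{k-1}}\a_{i_{k-1}i_k}\gt_{i_k}^{-1}$, into the product $\Hol_{\gt\a}(l) = \prod_{k=1}^m (\gt\a)_{i_{k-1}i_k}$ produces a chain in which every interior junction contributes a factor $\gt_{i_k}^{-1}\gt_{i_k} = \II$. These interior factors cancel telescopically, so the only surviving gauge contributions are those attached to the two endpoints of the chain, namely the values of $\gt$ at $i_0$ and at $i_m$. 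Since $l$ is a loop we have $i_m = i_0$, so both leftover factors are built from $\gt_{i_0}$ and combine into a conjugation of $\Hol_\a(l) = \prod_{k=1}^m \a_{i_{k-1}i_k}$ by $\gt_{i_0}$ (the exact placement of the inverse being dictated by the conventions fixed in \eqref{def-action-Xi-A} and Definition \ref{def:holonomy}), which is precisely \eqref{eq:Hol-gauge}.

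For the scalar holonomy I would apply the defining expression $\hol_\a(l) = \tr(2\II - \Hol_\a(l) - \Hol_\a(l)^\top)$ with $\gt\a$ in place of $\a$. By \eqref{eq:Hol-gauge} the matrix $\Hol_{\gt\a}(l)$ is conjugate to $\Hol_\a(l)$ and hence has the same trace. Moreover, because $\gt_{i_0} \in \SO(d)$ is orthogonal, transposing \eqref{eq:Hol-gauge} shows that $\Hol_{\gt\a}(l)^\top$ is conjugate to $\Hol_\a(l)^\top$ by the same element, so $\tr\Hol_{\gt\a}(l)^\top = \tr\Hol_\a(l)^\top$. Adding these two identities together with the manifestly unchanged term $\tr(2\II)$ yields $\hol_{\gt\a}(l) = \hol_\a(l)$.

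The computation is essentially routine; the only thing requiring care is the bookkeeping of the non-commutative product and of the base point. The single conceptual point is that the loop condition $i_m = i_0$ is exactly what forces the two leftover endpoint factors to combine into a conjugation rather than an arbitrary two-sided multiplication. This is also the reason why $\Hol_\a$ remains base-point dependent through the conjugating element $\gt_{i_0}$, whereas $\hol_\a$ does not, consistent with Proposition \ref{prop:scalar-hol-basepoint}.
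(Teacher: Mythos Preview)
Your argument is correct and follows exactly the same route as the paper's own proof: expand the holonomy of $\gt\a$ via the transformation rule $(\gt\a)_{ij}=\gt_i\a_{ij}\gt_j^{-1}$, telescope, and then invoke conjugation invariance of the trace. You have in fact supplied more detail than the paper does, including the observation that orthogonality of $\gt_{i_0}$ handles the transposed term.
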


	\begin{proof}
		Equation \eqref{eq:Hol-gauge} follows from  Definition \ref{def:holonomy} and the transformation property $(\gt \a)_{i j} = \gt_i \a_{i j} \gt_{j}^{-1}$. Equation \eqref{eq:hol-gauge} holds since the trace is invariant under conjugation.
	\end{proof}

\subsection{Holonomy as an Obstruction}
	\begin{proposition}[synchronizable voltages have trivial holonomy]\label{prop:triv-hol-sync}
		If $\a$ is synchronizable then $\Hol_\a(l) = \II$, for every $l \in \Lps_{+}(\eta)$.
	\end{proposition}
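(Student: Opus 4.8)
The plan is to prove the claim by a direct telescoping computation, exploiting the multiplicative form that a synchronizable voltage takes together with the cyclic structure of a loop.

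First I would unpack Definition \ref{def:synchronization}: synchronizability of $\alpha$ provides a gauge $\xi \in \Xi$ with $\alpha_{ij} = \xi_i^{-1}\xi_j$ for every positively oriented edge $i > j$. I would then observe that this relation in fact holds for \emph{every} adjacent pair $i \sim j$ irrespective of orientation, since for a negatively oriented edge one has $\alpha_{ij} = \alpha_{ji}^{-1} = (\xi_j^{-1}\xi_i)^{-1} = \xi_i^{-1}\xi_j$. Consequently, along any loop each consecutive factor $\alpha_{i_{k-1}i_k}$ may be replaced by $\xi_{i_{k-1}}^{-1}\xi_{i_k}$, independently of how that edge happens to be oriented.

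Next I would substitute this into the holonomy from Definition \ref{def:holonomy}. For a loop $l = (i_0,\dots,i_m)$ with $i_m = i_0$ this gives
\begin{equation*}
  \Hol_{\alpha}(l) = \prod_{k=1}^m \alpha_{i_{k-1}i_k} = \prod_{k=1}^m \xi_{i_{k-1}}^{-1}\xi_{i_k} = \xi_{i_0}^{-1}\xi_{i_m} = \II,
\end{equation*}
where all inner factors cancel pairwise and the final equality uses $i_m = i_0$.

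The computation is essentially formal, so there is no genuine obstacle; the only point demanding attention is the orientation bookkeeping in the first step, namely ensuring that the substitution $\alpha_{i_{k-1}i_k} = \xi_{i_{k-1}}^{-1}\xi_{i_k}$ is legitimate for each edge of the loop regardless of its orientation. As an alternative that leans on earlier results, I could instead note that synchronizability yields $\xi\alpha = \mbf{1}_{\mc{A}}$, whose holonomy is trivially $\II$, and then invoke the gauge-transformation law \eqref{eq:Hol-gauge} from Proposition \ref{prop:ginv-hol}, which reads $\II = \Hol_{\xi\alpha}(l) = \xi_{i_0}^{-1}\Hol_{\alpha}(l)\xi_{i_0}$ and hence forces $\Hol_{\alpha}(l) = \II$. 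Both routes rest on the same cancellation.
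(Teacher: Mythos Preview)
Your proof is correct and follows exactly the same idea as the paper's: substitute the synchronizable form $\alpha_{ij}=\xi_i^{-1}\xi_j$ into the holonomy product and let the factors telescope, using $i_m=i_0$ at the end. You are actually more careful than the paper---the one-line argument there leaves the orientation bookkeeping implicit, while you spell it out---and your alternative route via the gauge-transformation law of Proposition~\ref{prop:ginv-hol} is a valid repackaging of the same cancellation.
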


	\begin{proof}
		A synchronizable voltage $\a$ can be written as $\a_{i j} = \gt_{i} \gt_{j}^{-1}$ for some $\gt \in \Xi$. From Definition \ref{def:holonomy} the equation $\Hol_{\a}(l) = \II$ follows, for every loop $l \in \Lps_{+}(\eta)$.
	\end{proof}
	As a consequence of the preceding statement, a loop with non-trivial holonomy $\Hol_{\alpha}$ defines an obstruction for $\a$ to be synchronizable.

	\begin{theorem}[{\cite[Corollary 2.1]{gaoGeometrySynchronizationProblems2019}}]
	\label{thm:holonomy-obstruction}
		A voltage $\a$ is synchronizable if and only if $\Hol_\a(l) = \II$ for all loops $l \in \Lps_+(\eta)$.
	\end{theorem}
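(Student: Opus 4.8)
The forward implication is exactly Proposition~\ref{prop:triv-hol-sync}, so the real content lies in the converse: triviality of $\Hol_\a(l)$ along every loop must force $\a$ to be synchronizable. My plan is to reduce to a spanning-tree gauge and then read off all remaining edge voltages directly from the fundamental cycles of that tree.

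First I would fix a spanning tree $\eta_0 \subseteq \eta$ and apply Corollary~\ref{cor:tree-gauge} to pass to a representative $\a' = \gt\a$ of the gauge-invariant voltage $\omega$ satisfying $\a'_{ij} = \II$ on every tree edge. Since holonomy transforms only by conjugation under a gauge change (Proposition~\ref{prop:ginv-hol}), the hypothesis $\Hol_\a(l) = \II$ is preserved, giving $\Hol_{\a'}(l) = \II$ for all $l \in \Lps_+(\eta)$. The point of this reduction is that in the tree gauge all the ``information'' in $\a'$ is concentrated on the non-tree edges $\veps_1$.

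The key step exploits the fundamental cycles of $\eta_0$. For each non-tree edge $i > j \in \veps_1$, the unique tree path from $j$ to $i$ together with this edge forms a simple loop $l \in \Lps_+(\eta)$. In the product $\Hol_{\a'}(l) = \prod_k \a'_{i_{k-1} i_k}$, every factor contributed by a tree edge equals $\II$, so the whole product collapses to the single non-tree factor, i.e.\ $\Hol_{\a'}(l) = \a'_{ij}$ up to orientation; triviality of holonomy then yields $\a'_{ij} = \II$. The one thing to check carefully here is that the candidate loop is genuinely a simple loop in the sense of the definition (distinct interior vertices), which holds because the tree path between two vertices is unique and vertex-disjoint from the extra edge, so no vertex repeats apart from the base point. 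This verification is the main, though mild, obstacle; the matrix algebra afterwards is trivial because only one factor differs from $\II$.

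Combining the two steps gives $\a'_{ij} = \II$ on all edges, hence $\a' = \mbf{1}_{\mc{A}}$ and $\omega = [\gt, \mbf{1}_{\mc{A}}]$. Proposition~\ref{prop:char-sync} then identifies $\omega$ as synchronizable, and since synchronizability is a gauge-invariant property (again Proposition~\ref{prop:char-sync}), the original voltage $\a$ is synchronizable. The degenerate case $\veps_1 = \emptyset$, where $\eta$ is itself a tree, is automatically covered: the fundamental-cycle step is vacuous and synchronizability follows directly from Lemma~\ref{lem:tree-synchronization}.
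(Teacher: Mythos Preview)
Your proof is correct and follows essentially the same strategy as the paper: pass to a spanning-tree gauge (via Corollary~\ref{cor:tree-gauge}) and then use trivial holonomy along the fundamental cycle of each non-tree edge to force the remaining voltages to be the identity. The only cosmetic difference is that the paper keeps the original voltage and expresses the synchronization matrices as net voltages $\xi_i = \Pi_{\a}(\gamma_i)^{-1}$ along tree paths to a fixed root, whereas you first normalize to $\a' = \mbf{1}_{\mc{A}}$ on the tree and read off $\a'_{ij} = \II$ directly; both amount to the same computation.
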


	We first prove the following lemma.
	\begin{lemma}[path-independent net voltage]\label{lem:path-indep-pt}
		Let $\a$ denote a graph voltage with $\Hol_{\a}(l) = \II$ (trivial holonomy) for all loops $l \in \Lps_{+}(\eta)$. Then the net voltage on $\eta$ is path-independent: If there are two paths $\gamma, \gamma'$ in the graph which start and end at the same vertices, then $\Pi_{\a}(\gamma) = \Pi_\a(\gamma') \in \SO(d)$.
	\end{lemma}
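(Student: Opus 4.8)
The plan is to reduce the two-path statement to a single statement about \emph{closed walks} and then prove that statement by induction on length, invoking the trivial-holonomy hypothesis only on simple cycles. First I would extend the net-voltage formula of Definition \ref{def:net-voltages} verbatim to arbitrary walks $w=(v_0,\dots,v_k)$ (consecutive vertices adjacent, but not necessarily distinct), setting $\Pi_{\a}(w)=\a_{v_k v_{k-1}}\cdots\a_{v_1 v_0}$. Two elementary product rules would then be recorded: reversing a walk inverts its net voltage (because $\a_{ij}=\a_{ji}^{-1}$ and the order of the matrix product reverses), and concatenating ``first $\gamma$, then $\delta$'' multiplies net voltages as $\Pi_{\a}(\delta)\Pi_{\a}(\gamma)$. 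Applied to a loop $l=(i_0,\dots,i_m=i_0)$, the formula gives $\Pi_{\a}(l)=\Hol_{\a}(l)^{-1}$, so the hypothesis $\Hol_{\a}(l)=\II$ for all $l\in\Lps_+(\eta)$ is equivalent to $\Pi_{\a}(l)=\II$ on every simple cycle.

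The heart of the argument is the claim that \emph{every} closed walk $w=(v_0,\dots,v_k=v_0)$ satisfies $\Pi_{\a}(w)=\II$, which I would prove by induction on the length $k$. If $w$ is a simple cycle, then $\Pi_{\a}(w)=\II$ by the hypothesis in its net-voltage form. Otherwise $v_0,\dots,v_{k-1}$ are not all distinct, so there exist $0\le i<j\le k-1$ with $v_i=v_j$. Writing $P=\a_{v_i v_{i-1}}\cdots\a_{v_1 v_0}$, $C=\a_{v_j v_{j-1}}\cdots\a_{v_{i+1} v_i}$ and $Q=\a_{v_k v_{k-1}}\cdots\a_{v_{j+1} v_j}$, associativity gives the block factorization $\Pi_{\a}(w)=QCP$. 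Here $C=\Pi_{\a}$ of the inner closed walk $(v_i,\dots,v_j=v_i)$, of length $j-i<k$, while $QP=\Pi_{\a}$ of the outer closed walk $(v_0,\dots,v_i,v_{j+1},\dots,v_k)$ (the step $v_i\to v_{j+1}$ is legitimate since $v_i=v_j\sim v_{j+1}$), of length $k-(j-i)<k$. By the inductive hypothesis $C=\II$ and $QP=\II$, whence $\Pi_{\a}(w)=QCP=QP=\II$. The same mechanism absorbs immediate backtracking as the special case $j=i+2$, $v_{i+2}=v_i$, where $C=\a_{v_i v_{i+1}}\a_{v_{i+1}v_i}=\II$ directly.

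Finally I would deduce the lemma: given paths $\gamma,\gamma'$ from $a$ to $b$, concatenate $\gamma$ with the reversal of $\gamma'$ to obtain a closed walk $w$ based at $a$; by the two product rules $\Pi_{\a}(w)=\Pi_{\a}(\gamma')^{-1}\Pi_{\a}(\gamma)$. The closed-walk claim gives $\Pi_{\a}(w)=\II$, hence $\Pi_{\a}(\gamma)=\Pi_{\a}(\gamma')\in\SO(d)$, as required. The step I expect to be the main obstacle is precisely the bookkeeping in the inductive step: the hypothesis is available only for \emph{simple} loops, whereas the concatenated walk is in general non-simple, so the whole difficulty lies in decomposing an arbitrary closed walk into simple cycles while respecting the \emph{non-abelian} order of the matrix product. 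The factorization $\Pi_{\a}(w)=QCP$ is what makes this work without any conjugation gymnastics, since replacing the inner factor $C$ by $\II$ leaves exactly the net voltage $QP$ of the outer closed walk.
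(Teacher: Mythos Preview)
Your argument is correct and follows the same core idea as the paper: concatenate $\gamma$ with the reversal of $\gamma'$ to obtain a closed walk, observe that its net voltage is $\Pi_{\a}(\gamma')^{-1}\Pi_{\a}(\gamma)$, and conclude from triviality of that net voltage. The paper's proof does exactly this in two lines, simply writing $\Hol_{\a}(l)=\Pi_{\a}(\gamma')^{\top}\Pi_{\a}(\gamma)=\II$ for the composed loop $l$.

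The substantive difference is that you take seriously the fact that the hypothesis is only stated for \emph{simple} loops $l\in\Lps_{+}(\eta)$, whereas the concatenation of two paths sharing the same endpoints need not be simple (the paths may share interior vertices). The paper's proof glosses over this and treats the concatenation as if it were automatically an element of $\Lps_{+}(\eta)$. Your inductive decomposition of an arbitrary closed walk via the factorization $\Pi_{\a}(w)=QCP$, peeling off a shorter inner closed walk at each repeated vertex, is precisely what is needed to bridge that gap in the non-abelian setting, and it does so cleanly. So your route is the same in spirit but strictly more complete; the extra work buys you a proof that is honest about the distinction between simple loops and general closed walks. One small cosmetic point: make the base cases $k=0$ and $k=2$ (the trivial walk and the single-edge backtrack) explicit, since neither is a loop in $\Lps_{+}(\eta)$ under the paper's definition, yet both have $\Pi_{\a}=\II$ for the elementary reasons you already indicate.
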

	\begin{proof}
		Let $\gamma,\gamma'$ denote two such paths as in the statement of the lemma with transport matrices $\Pi_{\a}(\gamma)$ and $\Pi_{\a}(\gamma')$. Then consider the loop $l$ formed by composing the two paths $\gamma$ and $\gamma'$, which is possible because they start and end at the same points. We can orient the paths such that we can write $\Hol_{\a}(l) = \Pi_{\a}(\gamma')^{\top} \Pi_{\a}(\gamma) = \II$, from which we conclude that $\Pi_{\a}(\gamma') = \Pi_\a(\gamma)$.
	\end{proof}

	\begin{proof}[Proof of Theorem \ref{thm:holonomy-obstruction}]
		Since $\Hol_{\a}(l)=\II$ for all loops, the net voltage $\Pi$ along the graph is path-independent, by Lemma \ref{lem:path-indep-pt}.

		The reasoning below is based on the following facts.
		\begin{enumerate}
			\item Let $\eta_0$ denote a spanning tree of the graph $\eta$.
			\item Synchronize $\a$ along the tree $\eta_0$, which is possible by Lemma \ref{cor:tree-gauge}. This defines a synchronization $\gt$ on all graph vertices.
			\item This synchronization $\gt$ also synchronizes the edges of $\eta \setminus \eta_0$ since net voltage is path-independent.
		\end{enumerate}
		The spanning tree $\eta_{0}$ contains all  vertices of the graph such that the partial synchronization found on the tree $\eta_{0}$ defines a synchronization matrix $\xi_{i}$, for each vertex $i \in \nu$.
		The claim is that these matrices satisfy
		\begin{equation}\label{eq:hol-obstr-alpha}
		\alpha_{ij} = \xi_{i} \xi_{j}^{-1}
		\end{equation}
		also for the edges $i > j$ in the complement $\eta\setminus \eta_{0}$ of the spanning tree.

		Let $i> j$ be such an edge. Both endpoints $i$ and $j$ are connected to the root node $i_{0}$ of the spanning tree via unique paths $\gamma_{i}$ and $\gamma_{j}$. Hence we can form a closed loop $l$ by composing the paths $\gamma_{i}$ and $\gamma_{j}$ and the edge $i > j$. By the trivial holonomy assumption, we have
		\begin{equation}
			\Hol_{\a}(l) = \Pi_{\a}(\gamma_{i})\Pi_{\a}(\gamma_{j})^{-1}\alpha_{ij}  = \II,
		\end{equation}
		which implies
		\begin{equation}
			\alpha_{ij} = \Pi_{\a}(\gamma_{i})^{-1} \Pi_{\a}(\gamma_{j}).
		\end{equation}
		But by the construction of the tree gauge in Lemma \ref{lem:tree-synchronization}, we know that $\xi_{i} = \Pi_{\a}(\gamma_{i})^{-1}$ holds for all $i \in \nu$. Comparing this with the above equation shows that $\a_{ij} = \xi_{i} \xi_{j}^{-1}$ and verifies \eqref{eq:hol-obstr-alpha}.
	\end{proof}

\subsection{Yang-Mills Energy}
	We now characterize the Yang-Mills energy as a \textit{local} measure of holonomy. It has the advantage that it can be computed by only considering the 3-cliques which form a subspace of the space of all loops $\Lps(\eta)$, which is often too large to be checked for holonomy. In the general geometric context, the Yang-Mills energy measures the \textit{curvature} of connections on principal bundles. See, e.g., \cite[Section 10]{Petersen:2016aa} for a brief overview of curvature and holonomy and \cite[Sections 12,13] {Berger:2003aa} for an extended overview including the topological viewpoint.

	\begin{definition}[3-cliques (triangles)]
		The \textit{3-cliques $\Lps^3(\eta)$} of $\eta$ is the set of cycles in $\Lps(\eta)$ of length 3.
	\end{definition}

	\begin{definition}[Yang-Mills energy, flat voltages]\label{def:ym-flat}
		The \textit{discrete Yang-Mills energy} is the function
		\begin{equation}
			\mc{YM} : \Omega \to \R_{+}, \quad \omega = [\gt,\a] \mapsto \sum_{\ell \in \Lps^{3}(\eta)} \hol_{\a}(\ell).
		\end{equation}
		A voltage $\omega$ is called \textit{flat} if $\mc{YM}(\omega) = 0$.
	\end{definition}
	This functional is well-defined in that it does not depend on the choice of representative $[\xi,\alpha]$ for $\omega$, by Proposition \ref{prop:ginv-hol}.
	By the non-negativity of $\hol$, a voltage $\omega = [\xi,\alpha]$ is flat if and only if $\hol_{\alpha}(\delta) = 0$ for all 3-cliques $\delta \in \mc{L}^{3}(\eta)$.

	\begin{proposition}[synchronizable voltages are flat]\label{prop:synch-flat}
		Let $\omega$ be a synchronizable voltage, then $\mc{YM}(\omega) = 0$.
	\end{proposition}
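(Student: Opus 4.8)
The plan is to chain together two results already established in this section. Since both synchronizability and the Yang-Mills energy $\mc{YM}$ are gauge invariant quantities, I would first fix an arbitrary representative $\a$ of the gauge invariant voltage $\omega = [\gt,\a]$; the value $\mc{YM}(\omega) = \sum_{\ell \in \Lps^{3}(\eta)} \hol_{\a}(\ell)$ does not depend on this choice by Proposition \ref{prop:ginv-hol}, so working with a single representative is harmless.

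Next I would invoke Proposition \ref{prop:triv-hol-sync}, which states that a synchronizable voltage $\a$ has trivial holonomy $\Hol_{\a}(l) = \II$ along \emph{every} oriented loop $l \in \Lps_{+}(\eta)$. In particular this holds for the loops representing the $3$-cliques $\ell \in \Lps^{3}(\eta)$, which form only a subset of all loops, so the hypothesis delivers more than is needed. Applying the equivalence $\Hol_{\a}(l) = \II \iff \hol_{\a}(l) = 0$ from Proposition \ref{prop:scalar-hol-hol} to each such loop then yields $\hol_{\a}(\ell) = 0$ for every $3$-clique $\ell$.

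Finally, substituting into the definition of the Yang-Mills energy gives $\mc{YM}(\omega) = \sum_{\ell \in \Lps^{3}(\eta)} 0 = 0$, i.e.\ $\omega$ is flat. There is essentially no obstacle here: the statement is an immediate corollary of the holonomy characterization of synchronizability (Proposition \ref{prop:triv-hol-sync}) together with the fact that scalar holonomy detects triviality of the holonomy matrix (Proposition \ref{prop:scalar-hol-hol}). The one point worth a remark is that no converse relating triangle holonomies to general holonomies is required in this direction: since synchronizability annihilates the holonomy of every loop, it a fortiori annihilates it on every triangle, and the restriction of the sum to $3$-cliques is therefore unproblematic.
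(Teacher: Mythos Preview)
Your proposal is correct and follows essentially the same route as the paper, which simply cites Proposition~\ref{prop:triv-hol-sync}; you have merely spelled out the implicit step that $\Hol_{\a}(l)=\II$ forces $\hol_{\a}(\ell)=0$ (immediate from the definition, or via Proposition~\ref{prop:scalar-hol-hol}) and then summed over $\Lps^{3}(\eta)$.
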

	\begin{proof}
		The statement follows from Proposition \ref{prop:triv-hol-sync}.
	\end{proof}

	\begin{remark}[motivation from discrete gauge theory]
		Definition \ref{def:ym-flat} of the Yang-Mills energy is motivated by \textit{lattice gauge theory} \cite{Gattringer:2010zz} and \textit{gauge theory on graphs} \cite{baezSpinNetworkStates1996,jiangGaugeTheoryGraphs2023}. In lattice gauge theory one usually works with cubical meshes where the smallest cycles have length 4, from which a slight mismatch originates between our set-up and the convention of lattice gauge theory.
	\end{remark}

	\begin{figure}[t]
		\includegraphics[width=0.65\textwidth]{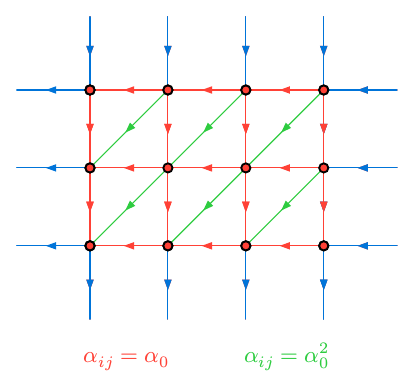}
		\caption{Regular 2 dimensional torus grid graph with flat but non-synchronizable voltage. Periodic boundary edges are marked with blue.}\label{fig:flat-non-triv}
	\end{figure}

	\begin{example}[flat but non-synchronizable voltage]
		Consider the regular 2 dimensional torus grid graph with additional diagonal edges as depicted by Figure \ref{fig:flat-non-triv}. This graph contains \textit{non-contractible} cycles, which are cycles which wrap across the periodic boundaries.

		Consider the orientation of the edges depicted in Figure \ref{fig:flat-non-triv} and define a voltage $\a$ that evaluates to a fixed matrix $\a_0$ for non-diagonal grid edges and to $\a_{0}^{2}$ on all diagonal edges, see Figure \ref{fig:flat-non-triv}. Then $\a$ is flat, that is $\mc{YM}(\a) = 0$, but there are still loops $l$ such that $\Hol_\a(l) \neq \II$ and hence $\a$ is not synchronizable, in general.
		For instance, following a loop $l$ along the straight vertical path across the periodic boundary of the depicted graph, we obtain $\Hol_{\alpha}(l) = \alpha_{0}^{3}$, which need not be equal to $\II$.
	\end{example}

	This example demonstrates that flatness is a \textit{local} measure of obstruction for \ccs to be synchronizable. In other words, even if $\a$ is flat, there may still exist a loop along which there is non-vanishing curvature such that $\a$ cannot be synchronizable.

		\begin{remark}[general theory of flat bundles]
			The notion of flatness is well known in the context of vector bundles theory, see e.g. \cite{milnorExistenceConnectionCurvature1958} for an early account. Notably, flat $G$-bundles can be classified in terms of equivalence classes of $G$-representations of the fundamental group (the non-contractible loops) of the underlying base manifold \cite{baumEichfeldtheorieEinfuehrungDifferentialgeometrie2014}.

			Flatness of voltages is connected to flatness of principal fiber bundles via lattice gauge theory; a flat principal bundle induces a flat voltage through discretization \cite{rotheLatticeGaugeTheories2012}.

			It was shown in \cite{gaoGeometrySynchronizationProblems2019} that a flat voltage $\a$ enables to construct a flat principal bundle over geometric realizations of the clique complex of $\eta$. Synchronizability of the voltage is equivalent to the triviality of the corresponding principal bundle.
		\end{remark}

	The above statements show that the Yang-Mills energy can only capture information about the flatness of voltages, whereas \textit{topological} effects are ignored, so far. We now turn to a modified version of the Yang-Mills energy, augmented by topological information, such that it also reflects topological properties of the underlying graph.

\subsection{Topological Obstructions to Synchronizability}
	The Yang-Mills energy is a \textit{local measure of non-synchronizability} for voltages.
	`Locality' here means that we only consider the holonomy across the smallest loops, the 3-cliques. Accordingly, the Yang-Mills energy can only account for obstructions that arises from cycles which are boundaries
		of subgraphs which can be built from triangles, cf.~Corollary \ref{cor:flat-voltag-null}.
	There are, of course, more complex cycles in graphs, in general.
Therefore, we focus below on \textit{non-local} obstructions \textit{not} captured by the Yang-Mills energy. The main result of this section, the more powerful \textit{extended} Yang-Mills energy, is stated as Theorem \ref{thm:eym}.

	\subsubsection{Graph Homology}
		In order to characterize non-local obstructions, we recall some facts about simplicial homology on graphs \cite{jiangStatisticalRankingCombinatorial2011a,grigor2015cohomology}; see \cite{Hatcher:2002aa} for the general background.

		We denote by $C_{*}(\eta)$ the set of induced subgraphs of $\eta$. This set becomes an abelian group under the \textit{symmetric difference} operation, denoted here by $+$, where $0 \in C_*(\eta)$ is the empty set. According to the symmetric difference operation, every element of $C_{*}(\eta)$ has order 2, i.e. it holds $x + x = 0$ for all $x \in C_*(\eta)$, which implies that $C_*(\eta)$ is also a vector space over the field $\mathbb{Z}/2\mathbb{Z}$.

		Next, let $C_0(\eta)$ denote the subgroup of $C_{*}(\eta)$ generated by subgraphs consisting only of vertices of $\eta$, by $C_1(\eta)$ the subgroup generated by edges of $\eta$ and by $C_2(\eta)$ the subgroup generated by 3-cliques of $\eta$. The group $C_1(\eta)$ contains all subgraphs of $\eta$ that do not contain isolated vertices. Elements of $\Sigma \in C_{2}(\eta)$ can be represented as finite sums $\Sigma = \sum_{r} \delta_{r}$ where $\delta_{r} \in \mc{L}^{3}(\eta)$ are 3-cliques. Both $C_1(\eta)$ and $C_2(\eta)$ are  vector subspaces of $C_*(\eta)$ and the 3-cliques are a basis for $C_2(\eta)$, such that representations of the form $\Sigma = \sum_{r} \delta_{r}$ are unique.

		We define the linear maps $\partial_1 : C_1(\eta) \to C_0(\eta)$ and $\partial_2 : C_2(\eta) \to C_1(\eta)$: $\partial_1$ sends a subgraph $\sigma \in C_1(\eta)$ to its vertices which are contained in an odd number of edges and $\partial_2$ is defined on 3-cliques $\mc{L}^{3}(\eta)$ by sending a triangle to the sum of its boundary edges. The mapping can then be linearly extended to $C_{2}(\eta)$ by
		\begin{equation}
		\partial_{2} : C_{2}(\eta) \to C_{1}(\eta), \quad
			\Sigma = \sum_{r} \delta_{r} \mapsto  \sum_{r} \partial_{1} \delta_{r}.
		\end{equation}
		Equivalently, $\partial_{2}$ sends $\Sigma \in C_{2}(\eta)$ to the sum of its edges which are contained in an odd number of triangles.
		One can check that $\partial_1 \circ \partial_2 = 0$ and $\ker \partial_1$ are the Eulerian subgraphs of $\eta$, while $\o{im} \partial_2$ are the Eulerian subgraphs that can be represented as the boundary of subgraphs $\Sigma \in C_{2}(\eta)$ additively built from triangles.

		The \textit{first homology group} of $\eta$ is defined as the quotient group
		\begin{equation}
			\Hom_1(\eta) = \ker \partial_1 / \o{im} \partial_2.
		\end{equation}
		This group contains the Eulerian subgraphs modulo boundaries of subgraphs $\Sigma \in C_{2}(\eta)$. It is also a vector space over the field $\mathbb{Z}/2\mathbb{Z}$ with dimension denoted by
		\begin{equation}
			h_{1}(\eta) \coloneqq \dim H_{1}(\eta).
		\end{equation}

		The space $\Hom_{1}(\eta)$ admits a \textit{cycle basis} \cite{diestelGraphTheory2025}, i.e.~a vector space basis consisting of simple cycles $\ell_k \in \Lps(\eta), k \in \{1,\dots,h_{1}(\eta)\}$.

		There is a canonical projection
		\begin{equation}
		\Lps(\eta) \to \Hom_1(\eta),\qquad \ell \mapsto [\ell],
		\end{equation}
		and two cycles $\ell,\ell'$ are called \textit{homologous} if $[\ell'] = [\ell]$.

	\subsubsection{Holonomy of Flat Voltages}

		\begin{figure}
			\begin{minipage}{.3\textwidth}
				\includegraphics[width=\textwidth]{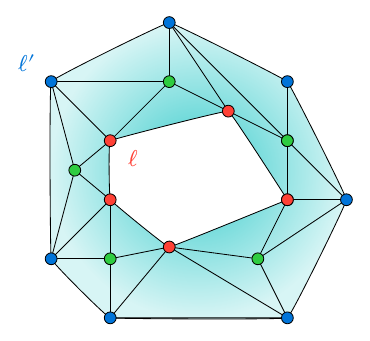}
			\end{minipage}
			\begin{minipage}{.3\textwidth}
				\includegraphics[width=\textwidth]{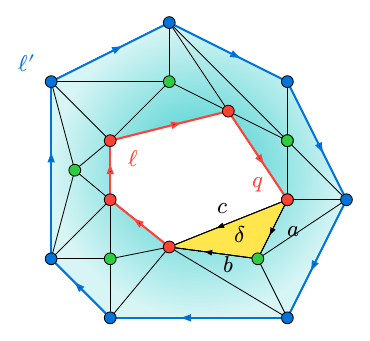}
			\end{minipage}
			\begin{minipage}{.3\textwidth}
				\includegraphics[width=\textwidth]{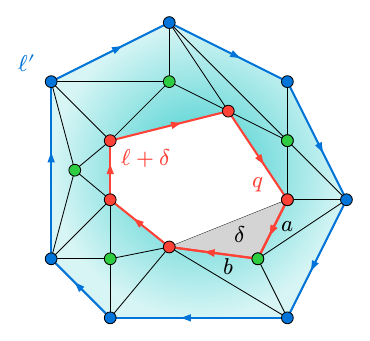}
			\end{minipage}
			\caption{Illustration for proof of Lemma \ref{lem:scalar-hol-homologous}}\label{fig:proof-homotopy}
		\end{figure}

Recall Definition \ref{def:ym-flat} and the corresponding Proposition \ref{prop:synch-flat}.

		\begin{lemma}[flat \ccs do not distinguish homologous loops]\label{lem:scalar-hol-homologous}
			If $\a$ is a \textbf{flat} voltage and $\ell,\ell' \in \Lps(\eta)$ are two homologous cycles, then
			\begin{equation}
				\hol_{\a}(\ell) = \hol_{\a}(\ell').
			\end{equation}
		\end{lemma}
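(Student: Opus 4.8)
The plan is to deform $\ell$ into $\ell'$ across the triangulated region they bound, using that flatness makes every triangle's holonomy trivial. First I would record the consequence of flatness: by Definition \ref{def:ym-flat} together with Proposition \ref{prop:scalar-hol-hol}, $\mc{YM}(\a)=0$ forces $\Hol_\a(\delta)=\II$ for every $3$-clique $\delta\in\Lps^3(\eta)$. For a triangle with vertices $a,b,c$ this reads $\a_{ab}\a_{bc}=\a_{ac}$, i.e.\ along a single triangle the net voltage does not depend on which of the two arcs one follows. Extending $\Hol_\a$ from simple loops to arbitrary closed walks as the ordered product of edge voltages (the scalar holonomy $\hol_\a$ stays invariant under conjugation and inversion, by the symmetrization argument of Proposition \ref{prop:scalar-hol-basepoint} and Proposition \ref{prop:ginv-hol}), I would then isolate three \emph{elementary moves} on closed walks that preserve $\Hol_\a$ up to conjugation, hence preserve $\hol_\a$: cyclic rotation and orientation reversal of the base point; insertion or deletion of a backtrack $i\to j\to i$, harmless because $\a_{ij}\a_{ji}=\II$; and the \emph{triangle move}, which replaces one edge of a triangle occurring in the walk by the other two (or conversely), valid precisely by the flatness relation above.

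Next I would invoke homology. Since $\ell$ and $\ell'$ are homologous, $\ell+\ell'\in\o{im}\partial_2$, so there is a $2$-chain $\Sigma=\sum_{r}\delta_r\in C_2(\eta)$, a finite sum of $3$-cliques, with $\partial_2\Sigma=\ell+\ell'$. Geometrically $\Sigma$ is a triangulated patch whose boundary is the symmetric difference of the two cycles, and the idea is to push $\ell$ across the triangles $\delta_r$ one at a time until it becomes $\ell'$. I would make this precise by induction on the number $n=|\Sigma|$ of triangles. For $n=0$ one has $\ell+\ell'=0$ in $C_1(\eta)$, so the two cycles carry the same edge set and are therefore the same cycle, whence $\hol_\a(\ell)=\hol_\a(\ell')$ trivially. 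For the inductive step I would select a triangle $\delta\subseteq\Sigma$ that shares an edge with the current walk, apply the triangle move to slide the walk across $\delta$, remove any resulting backtracks, and thereby replace $\ell$ by a closed walk representing $\ell+\partial_2\delta$; this reduces the remaining filling to $\Sigma+\delta$, which has $n-1$ triangles, so the inductive hypothesis applies.

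Stringing the moves together, every intermediate deformation leaves $\Hol_\a$ unchanged up to conjugation and inversion, so $\hol_\a$ is constant along the whole deformation, and $\hol_\a(\ell)=\hol_\a(\ell')$ follows.

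The main obstacle is combinatorial rather than algebraic: after one symmetric difference $\ell+\partial_2\delta$ the object is only guaranteed to lie in $\ker\partial_1$, i.e.\ it is an Eulerian subgraph and need not be a single simple cycle. This is exactly why I would conduct the argument on the level of closed walks and the ordered product $\Hol_\a$ rather than on simple cycles, and why the delicate point is to guarantee at each stage that some triangle of the remaining filling $\Sigma$ meets the current walk in an edge (or, after reorganizing the walk by moves of the first two types, in a length-two arc), so that the triangle move is actually applicable. Establishing that such a "peelable" boundary triangle always exists---for instance by ordering the $\delta_r$ so that each successive one shares an edge with the boundary of the partially consumed patch---is the crux that makes the inductive push well defined; the scalar-holonomy invariances of Propositions \ref{prop:scalar-hol-basepoint} and \ref{prop:ginv-hol} then supply the remaining bookkeeping.
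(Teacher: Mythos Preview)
Your approach is essentially the same as the paper's: both write $\ell+\ell'=\partial_2\Sigma$ with $\Sigma=\sum_r\delta_r$, then slide $\ell$ across the triangles $\delta_r$ one at a time, using the flatness relation $\a_{ab}\a_{bc}=\a_{ac}$ to see that each triangle move preserves $\Hol_\a$ (hence $\hol_\a$), and finally invoke orientation/base-point invariance of the scalar holonomy. The paper carries this out slightly more tersely---it simply writes $\Hol_\a(l)=cq=baq=\Hol_\a(l+\delta)$ and iterates---whereas you are more explicit about working at the level of closed walks, about the backtrack move, and about the combinatorial issue of guaranteeing a peelable boundary triangle at every stage; the paper does not address this last point in any more detail than you do.
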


		\begin{proof}
			\newcommand\xx\Sigma%
			\newcommand\yy\Sigma%
				Since the two cycles are homologous, we may choose $\yy \in C_2(\eta)$  such that $\partial_2 \yy = \ell' + \ell \in C_1(\eta)$.
				Next we choose two oriented loops $l,l'$ which represent $\ell,\ell'$, respectively.

				We may write $\yy = \sum_{r} \delta_r$, for some $\delta_r \in \Lps^3(\eta)$  and determine
				 oriented loops in $\Lps_{+}(\eta)$ which represent the cycles $\delta_r$. For simplicity,  we denote these oriented loops again by $\delta_r$.

				Let $\delta$ be such an oriented triangle of $\yy$ which shares an edge with $\ell$, and with edge matrices $a,b,c$ satisfying the relation
				\begin{equation}\label{eq:triangle}
					c = b a.
				\end{equation}
				This equation is a rearrangement of $\Hol_{\a}(\delta)=\II$, which is a consequence of the assumed flatness of $\a$.

				Choose the orientation for $l$ such that
				\begin{equation}
					\Hol_{\a}(l) = c q, \quad q \in \SO(d),
				\end{equation}
				where $q$ is the net voltage along the segments of $l$ not contained in $\delta$, see Figure \ref{fig:proof-homotopy}. Applying \eqref{eq:triangle} yields
				\begin{equation}
					\Hol_{\a}(l) = b a q = \Hol_{\a}(l + \delta),
				\end{equation}
				where $l + \delta$ is the loop $l$ expanded by $\delta$.

			By iterating these steps over the triangles $\delta_i$ we obtain
			\begin{equation}
				\Hol_{\a}(l) = \Hol_{\a}\Big(l + \sum_{r} \delta_r \Big) = \Hol_{\a}(l'),
			\end{equation}
			where all the orientations are chosen compatibly.

			The above steps rely on specific choices for the orientations of $l,l'$. Yet the scalar holonomy is invariant under changes of orientation. Hence we conclude that
			\begin{equation}
				\hol_{\a}(\ell) = \hol_{\a}(\ell'),
			\end{equation}
			which completes the proof.
		\end{proof}

			A loop $\ell$ is called \textit{null-homologous} if $[\ell] = [0] \in H_{1}(\eta)$.
			\begin{corollary}[flat voltage are unobstructed on null-homologous loops]\label{cor:flat-voltag-null}
				Let $\a$ be a flat voltage and $\ell$ a null-homologous loop. Then $\hol_\a(\ell) = 0$.
			\end{corollary}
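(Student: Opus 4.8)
The plan is to reduce the statement to the triangle-filling mechanism already established in the proof of Lemma \ref{lem:scalar-hol-homologous}, combined with Proposition \ref{prop:scalar-hol-hol}. Since $\ell$ is null-homologous, $[\ell] = [0]$ in $H_1(\eta) = \ker\partial_1/\o{im}\partial_2$, and as cycles lie in $\ker\partial_1$ this means $\ell \in \o{im}\partial_2$. Hence there is some $\Sigma \in C_2(\eta)$ with $\partial_2\Sigma = \ell$, which by the basis property of the 3-cliques we may write as $\Sigma = \sum_r \delta_r$ with each $\delta_r \in \Lps^3(\eta)$.

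First I would use flatness to trivialize the holonomy on every triangle of $\Sigma$. By Definition \ref{def:ym-flat} and the non-negativity of $\hol$, flatness of $\a$ yields $\hol_\a(\delta_r) = 0$ for every $r$, and Proposition \ref{prop:scalar-hol-hol} upgrades this to $\Hol_\a(\delta_r) = \II$. The main step then runs the filling argument of Lemma \ref{lem:scalar-hol-homologous} with the ``second loop'' taken to be empty: choosing an oriented loop $l$ representing $\ell$ and orienting the $\delta_r$ compatibly, I expand $l$ one triangle at a time, each time invoking the relation $c = ba$ from \eqref{eq:triangle} (which is precisely $\Hol_\a(\delta_r) = \II$ rearranged) to rewrite $\Hol_\a(l)$ without changing its value while shrinking the loop. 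Because the triangles of $\Sigma$ fill the entire region bounded by $l$, after exhausting them the loop collapses and one is left with $\Hol_\a(l) = \II$.

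The conclusion is then immediate: $\hol_\a(\ell) = \tr(2\II - \Hol_\a(l) - \Hol_\a(l)^\top) = \tr(2\II - \II - \II) = 0$, and this value is well-defined on the cycle $\ell$ by Proposition \ref{prop:scalar-hol-basepoint}. The only point requiring care is the orientation bookkeeping and the verification that successively removing the boundary triangles of $\Sigma$ genuinely collapses $l$ to the trivial loop; since this is exactly the reduction already carried out in Lemma \ref{lem:scalar-hol-homologous}, I expect no essentially new obstacle.
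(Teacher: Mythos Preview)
Your argument is correct, but it re-derives the triangle-filling mechanism rather than simply invoking Lemma \ref{lem:scalar-hol-homologous} as a black box. The paper's proof is shorter: since every triangle $\delta \in \Lps^{3}(\eta)$ satisfies $\delta = \partial_{2}\delta \in \o{im}\partial_{2}$, any triangle is itself null-homologous, hence $[\ell] = [0] = [\delta]$; Lemma \ref{lem:scalar-hol-homologous} then gives $\hol_{\a}(\ell) = \hol_{\a}(\delta)$ directly, and flatness makes the right-hand side vanish. Your version instead sets the ``second loop'' to be empty and replays the expansion-by-triangles argument until $l$ collapses. This works, but note that the empty sequence is not literally an element of $\Lps(\eta)$ as defined, so you cannot formally cite Lemma \ref{lem:scalar-hol-homologous} with $\ell' = \emptyset$; you are really reproving a degenerate case of it. The paper's choice of a genuine triangle for $\ell'$ sidesteps this technicality and keeps the corollary to two lines.
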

			\begin{proof}
				For the null-homologous loop $\ell$, there is a triangle $\delta \in \mc{L}^{3}(\eta)$ such that $[\ell] = [\delta]$. Thus,
				\begin{equation}
					\hol_{\alpha}(\ell) = \hol_{\alpha}(\delta),
				\end{equation}
				by Lemma \ref{lem:scalar-hol-homologous}. Flatness of $\a$ implies $\hol_{\alpha}(\delta) = 0$.
			\end{proof}

		\begin{figure}
			\includegraphics[width=0.7\textwidth]{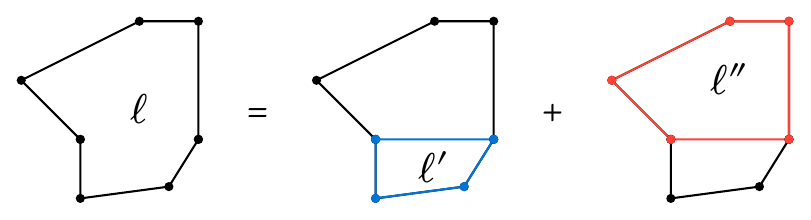}
			\caption{Decomposition of the loop $\ell$ (cf.~Lemma \ref{lem:scalar-hol-flat-loop})}\label{fig:loop-decomp}
		\end{figure}

		\begin{figure}[b]
			\centering
			\includegraphics[width=.35\textwidth]{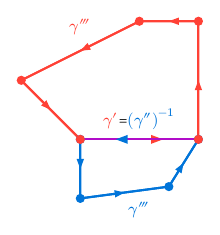}
			\caption{Decomposition of cycles $\ell'$ and $\ell''$ into partially overlapping arcs.}\label{fig:gammapp}
		\end{figure}

		\begin{lemma}[scalar holonomy and flat loops]\label{lem:scalar-hol-flat-loop}
			Let $\ell$ be a cycle such that there is a decomposition $\ell = \ell' + \ell''$ with two cycles $\ell',\ell'' \in \Lps(\eta)$ (depicted in Figure \ref{fig:loop-decomp}) and $\hol_{\a}(\ell'') = 0$. Then one has
			\begin{equation}\label{eq:lem-scalar-hol-flat-loop}
				\hol_{\a}(\ell) = \hol_\a(\ell').
			\end{equation}
		\end{lemma}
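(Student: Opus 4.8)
The plan is to reduce the claimed identity of scalar holonomies to an identity of full holonomy matrices, exploiting the decomposition $\ell = \ell' + \ell''$ together with the hypothesis $\hol_{\a}(\ell'') = 0$. First I would unpack the symmetric-difference decomposition geometrically, following Figure \ref{fig:gammapp}: the cycles $\ell'$ and $\ell''$ share a common arc $p$ joining two vertices $u$ and $v$, and writing $\gamma'$ (respectively $\gamma''$) for the complementary arc of $\ell'$ (respectively $\ell''$), the edges of $p$ cancel in $\ell' + \ell''$, so that $\ell$ is represented by the loop that traverses $\gamma'$ from $u$ to $v$ and then $\gamma''$ backwards from $v$ to $u$. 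Since scalar holonomy is well-defined on cycles, independently of base point and orientation (Propositions \ref{prop:scalar-hol-basepoint} and \ref{prop:ginv-hol}), I am free to choose these loop representatives compatibly.

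Next I would fix the common base point $u$ and express all three holonomies through the net voltages of Definition \ref{def:net-voltages}. Writing $\Pi_{\a}(\gamma')$, $\Pi_{\a}(\gamma'')$ and $\Pi_{\a}(p)$ for the net voltages of the three arcs directed from $u$ to $v$, and using that reversing a path inverts its net voltage while concatenation multiplies them, I obtain $\Hol_{\a}(\ell') = \Pi_{\a}(p)^{-1}\Pi_{\a}(\gamma')$, $\Hol_{\a}(\ell'') = \Pi_{\a}(p)^{-1}\Pi_{\a}(\gamma'')$, and $\Hol_{\a}(\ell) = \Pi_{\a}(\gamma'')^{-1}\Pi_{\a}(\gamma')$.

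The key step is to invoke Proposition \ref{prop:scalar-hol-hol}: since $\hol_{\a}(\ell'') = 0$, the full holonomy satisfies $\Hol_{\a}(\ell'') = \II$, which forces $\Pi_{\a}(\gamma'') = \Pi_{\a}(p)$. Substituting this into the expression for $\Hol_{\a}(\ell)$ yields $\Hol_{\a}(\ell) = \Pi_{\a}(p)^{-1}\Pi_{\a}(\gamma') = \Hol_{\a}(\ell')$, so the two loops have identical holonomy matrices for the chosen base point and, a fortiori, identical scalar holonomies, which is exactly \eqref{eq:lem-scalar-hol-flat-loop}.

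I expect the main obstacle to be the bookkeeping of orientations and base points: the scalar holonomy is orientation- and base-point independent, but the intermediate full holonomy matrices are not, so I must orient $\gamma'$, $\gamma''$ and $p$ consistently with both the symmetric-difference cancellation and the common base point $u$. Once this is set up correctly the algebra is immediate. The only additional care needed is to confirm that the overlap of $\ell'$ and $\ell''$ is a single arc as in Figure \ref{fig:gammapp}; the case of several disjoint shared arcs follows by iterating the same cancellation. I note finally that, despite the name of the lemma, global flatness of $\a$ is not used in the argument beyond what is encoded in the single-loop hypothesis $\hol_{\a}(\ell'') = 0$.
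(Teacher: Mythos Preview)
Your proposal is correct and follows essentially the same route as the paper: decompose the three cycles into arcs around a common base point, factor the holonomies through the net voltages on these arcs, apply Proposition~\ref{prop:scalar-hol-hol} to turn $\hol_{\a}(\ell'')=0$ into $\Hol_{\a}(l'')=\II$, and then read off $\Hol_{\a}(l)=\Hol_{\a}(l')$. The paper's arc labelling differs (it writes $l'=\gamma'\gamma$ and $l''=\gamma'''\gamma''$ with $\gamma''=(\gamma')^{-1}$, deriving the factorization $\Hol_{\a}(l)=\Hol_{\a}(l'')\Hol_{\a}(l')$), but the content is identical, and your closing remarks on orientation bookkeeping and on not requiring global flatness are accurate.
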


		\begin{proof}
			First find representing loops $l,l',l'' \in \Lps_{+}(\eta)$ for $\ell,\ell',\ell''$ with orientations and common base point $i_{0}$, as depicted by Figure \ref{fig:loop-decomp}. Then decompose the loop $l'$ into two parts $\gamma,\gamma'$ and the loop $l''$ into two parts $\gamma'',\gamma'''$, where $\gamma''$ is $\gamma'$ in reverse order, as depicted in Figure \ref{fig:gammapp}.
			Then we can write
			\begin{equation}
				\Hol_{\a}(l) = \Pi(\gamma''')\Pi(\gamma), \quad \Hol_{\a}(l') = \Pi(\gamma')\Pi(\gamma), \quad \Hol_{\a}(l'') = \Pi(\gamma''')\Pi(\gamma'').
			\end{equation}
			and thus have
			\begin{equation}\label{eq:proof-scalar-hol-flat-loop-aux}
				\Hol_{\a}(l) = \Pi(\gamma''')\Pi(\gamma) = \Pi(\gamma''')\Pi(\gamma'')\Pi(\gamma')\Pi(\gamma) = \Hol_{\a}(l'') \Hol_{\a}(l'),
			\end{equation}
			since $\Pi(\gamma'') = \big(\Pi(\gamma')\big)^{-1}$. The assumption $\hol_{\a}(\ell'') = 0$ implies $\Hol_{\a}(l'') = \II$ by Proposition \ref{prop:scalar-hol-hol} and consequently $\Hol_{\a}(l) = \Hol_{\a}(l')$ by \eqref{eq:proof-scalar-hol-flat-loop-aux}. Turning to scalar holonomy yields the equation $\hol_{\a}(l) = \hol_{\a}(l')$  that does not depend on the chosen orientations anymore, by virtue of Proposition \ref{prop:scalar-hol-basepoint}, and therefore proves \eqref{eq:lem-scalar-hol-flat-loop}.
		\end{proof}

	\subsubsection{Extended Yang-Mills Energy}
		\begin{theorem}[extended Yang-Mills energy]\label{thm:eym}
			Let $\{\ell_{1},\dotsc,\ell_{h_{1}(\eta)}\} \in \Lps(\eta)$ denote a cycle basis of $\Hom_1(\eta)$. The function
			\begin{equation}
				\mc{YM}_{\o{e}} : \Omega \to \R_+, \qquad \mc{YM}_{\o{e}}(\omega) =
				\mc{YM}(\omega) + \sum_{k=1}^{h_{1}(\eta)} \hol_{\a}(\ell_{k})
			\end{equation}
			establies the equivalence of the relations
			\begin{equation}
				\omega \in \Omega \text{  is synchronizable} \quad\iff\quad \mc{YM}_{\o{e}}(\omega) = 0.
			\end{equation}
		\end{theorem}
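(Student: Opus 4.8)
The plan is to prove the two implications separately, exploiting that every summand of $\mc{YM}_{\o{e}}$ takes values in $\R_+$ together with the obstruction-theoretic results already established. For the forward implication, suppose $\omega$ is synchronizable. Then Proposition \ref{prop:synch-flat} gives $\mc{YM}(\omega)=0$, while Proposition \ref{prop:triv-hol-sync} yields $\Hol_\a(\ell_k)=\II$ for each basis cycle, whence $\hol_\a(\ell_k)=0$ by Proposition \ref{prop:scalar-hol-hol}. Summing these contributions gives $\mc{YM}_{\o{e}}(\omega)=0$, so this direction is immediate.

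For the converse I would first note that $\mc{YM}_{\o{e}}(\omega)=0$ forces \emph{both} contributions to vanish, since $\mc{YM}$ and each $\hol_\a(\ell_k)$ are non-negative: thus $\a$ is flat and $\hol_\a(\ell_k)=0$ for all $k$. By Theorem \ref{thm:holonomy-obstruction} combined with Proposition \ref{prop:scalar-hol-hol}, synchronizability is equivalent to $\hol_\a(\ell)=0$ for \emph{every} cycle $\ell\in\Lps(\eta)$, so this vanishing is the target.

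The core of the argument is to propagate vanishing from the chosen basis to all cycles, and here flatness is essential. By Lemma \ref{lem:scalar-hol-homologous}, flatness makes $\hol_\a$ depend only on the homology class, so it descends to a well-defined map $\overline{\hol}_\a\colon \Hom_1(\eta)\to\R_+$ with $\overline{\hol}_\a([\ell_k])=0$ for all $k$ and $\overline{\hol}_\a(0)=0$ by Corollary \ref{cor:flat-voltag-null}. The key sub-step I would extract from Lemma \ref{lem:scalar-hol-flat-loop} is that augmenting a cycle by another cycle of vanishing scalar holonomy leaves $\hol_\a$ unchanged; in homological terms, $\overline{\hol}_\a([\tau]+[\sigma])=\overline{\hol}_\a([\tau])$ whenever $\overline{\hol}_\a([\sigma])=0$. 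Given an arbitrary cycle $\ell$ with expansion $[\ell]=\sum_{k\in S}[\ell_k]$, I would then add the basis classes one at a time, each addition leaving $\overline{\hol}_\a$ invariant, until the class reduces to $0$; comparing with $\overline{\hol}_\a(0)=0$ forces $\hol_\a(\ell)=\overline{\hol}_\a([\ell])=0$. As $\ell$ was arbitrary, Proposition \ref{prop:scalar-hol-hol} and Theorem \ref{thm:holonomy-obstruction} then deliver synchronizability.

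The step I expect to be delicate is the geometric input behind that key sub-step. Lemma \ref{lem:scalar-hol-flat-loop} presupposes a decomposition $\ell=\ell'+\ell''$ realized by loops sharing a common arc, whereas the basis expansion is merely an identity in $\Hom_1(\eta)$. Converting $[\ell]=\sum_{k\in S}[\ell_k]$ into an explicit sequence of admissible cut-and-glue moves — selecting representatives that successively share arcs, and routing the basis cycles to $\ell$ through the graph so that each augmentation fits the hypothesis of Lemma \ref{lem:scalar-hol-flat-loop} — is the combinatorial heart of the proof. Once this realization is arranged, the bookkeeping of orientations and base points is routine by Proposition \ref{prop:scalar-hol-basepoint}, and the chain of equalities closes.
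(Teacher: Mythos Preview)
Your proposal is correct and follows essentially the same route as the paper: both directions use the same ingredients (Propositions \ref{prop:synch-flat}, \ref{prop:triv-hol-sync}, \ref{prop:scalar-hol-hol}, Theorem \ref{thm:holonomy-obstruction}), and the converse proceeds by flatness plus iterated application of Lemmas \ref{lem:scalar-hol-homologous} and \ref{lem:scalar-hol-flat-loop} to peel off basis cycles until nothing remains. The only cosmetic difference is that the paper makes the reduction explicit by absorbing the triangles witnessing $[\ell]=\sum_j[\ell_{k_j}]$ into auxiliary cycles $\tilde{\ell}_{k_j}$ with $\ell=\sum_j\tilde{\ell}_{k_j}$ in $C_1(\eta)$, whereas you phrase the same reduction via the induced map $\overline{\hol}_\a$ on $\Hom_1(\eta)$; the delicate shared-arc hypothesis of Lemma \ref{lem:scalar-hol-flat-loop} that you flag is treated with the same level of informality in the paper.
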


		\begin{proof}
			If $\omega$ synchronizable, then $\hol_{\a}(\ell)= 0$ for all loops $\ell \in \mc{L}(\eta)$. Then $\mc{YM}(\omega)=0$ by Proposition \ref{prop:synch-flat} and consequently $\mc{YM}_{\o{e}}(\omega) = 0$.

			It remains to show that $\mc{YM}_{\o{e}}(\omega) = 0$ implies that $\omega$ is synchronizable. Thus, we assume that $\mc{YM}_{\o{e}}(\omega) = 0$ holds. Recall that the scalar holonomy is non-negative on every loop $\ell \in \mc{L}(\eta)$, such that both terms of the extended Yang-Mills energy are non-negative. Therefore, $\mc{YM}_{\rm{e}}= 0$ implies that $\omega$ is flat and $\hol_{\a}(\ell_k) = 0$ for all $k$. The claim is that $\hol_{\a}(\ell) = 0$ holds for all loops of the graph $\eta$.

			First, we remark that there are basis loops $\ell_{k_j}, k_{j} \in \{1,\dots,h_{1}(\eta)\} \}$ such that $[\ell] = \sum_{j} [\ell_{k_j}] = \left[\sum_{j} \ell_{k_{j}}\right]$, because the $\ell_k$ form a basis for $\Hom_1(\eta)$. Next observe that one can write
			\begin{equation}
			\ell = \sum_j \ell_{k_{j}} + \sum_{r} \delta_{r},
			\end{equation}
			where the $\delta_{r}$ are triangles. Now we distribute the triangles $\delta_{r}$ of the second sum into the first sum, by adding the triangles $\delta_{r}$ to the cycles $\ell_{k_{j}}$ until all triangles are absorbed into one of the cycles. This yields a new set of cycles, denoted by $\tilde{\ell}_{k_{j}}$, and
			\begin{equation}
				\ell = \sum_{j} \tilde{\ell}_{k_{j}}.
			\end{equation}
			The relation between $\tilde{\ell}_{k_{j}},\ell$ and $\ell_{k_{i}}$ is depicted in Figure \ref{fig:aux-loops}.

Now one has $[\tilde{\ell}_{k_{j}}]=[\ell_{k_{j}}]$ since one cycle results from the other by adding triangles. This implies $\hol_{\a}(\tilde{\ell}_{k_{j}}) = \hol_{\a}(\ell_{k_{j}}) = 0$, by Lemma \ref{lem:scalar-hol-homologous}. Applying repeatedly Lemma \ref{lem:scalar-hol-flat-loop} yields
			\begin{equation}
				\hol_{\a}(\ell) = \hol_{\a}\Big(\sum_j \tilde{\ell}_{k_{j}}\Big) = 0.
			\end{equation}
			This shows $\omega$ has trivial holonomy by Proposition \ref{prop:scalar-hol-hol}  and thus is synchronizable by Theorem \ref{thm:holonomy-obstruction}.
		\end{proof}
		\begin{figure}
			\includegraphics[width=\textwidth]{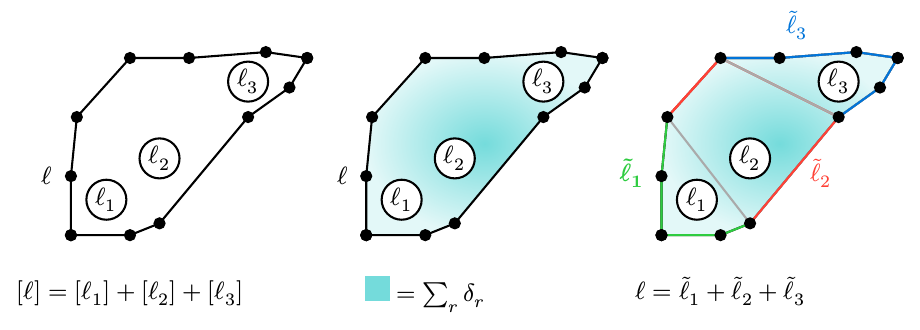}
			\caption{Illustration of the proof of Theorem \ref{thm:eym}: the relationship between the basis cycles $\ell_{k_{j}}$, the main cycle $\ell$ and the auxiliary cycles $\tilde{\ell}_{k_{j}}$.}\label{fig:aux-loops}
		\end{figure}

\section{Conclusions}
In this paper, we extended of the available formalism of gauge invariant data representation for machine learning \cite{weilerCoordinateIndependentConvolutional2021,gerkenGeometricDeepLearning2021} using methods from lattice gauge theory. Specifically, we introduced a gauge invariant heat kernel formalism and elaborated the connection to the discrete Yang-Mills energy from lattice gauge theory.
In addition to the connection to discrete gauge theories, our results reinforce from a common viewpoint the links between equivariant deep learning \cite{weilerCoordinateIndependentConvolutional2021,gerkenGeometricDeepLearning2021}, classical data science of vector bundles \cite{Singer:2012aa,batardHeatEquationsVector2011,batardClassGeneralizedLaplacians2014,batardHighOrderDIPVBTVImage2025} and the discrete vector bundle/cellular sheaf formalism from \cite{hansenSpectralTheoryCellular2019a,bodnarNeuralSheafDiffusion2023,bambergerBundleNeuralNetworks2024}.

Motivated by other approaches to discrete fiber bundles \cite{leokDiscreteTheoryConnections2005,Gattringer:2010zz,Singer:2012aa,gaoGeometrySynchronizationProblems2019}, we emphasized a framework of discrete associated vector bundles which provides a proper representation of vector bundle concepts from differential geometry that is amenable to computational realizations. The relations of our discrete approach and the continuous vector bundle formalism are discussed in some detail in Appendix \ref{sec:app}. 

Our discrete approach has the advantage of being formulated entirely in terms of concepts from finite graph theory, that are convenient for numerical applications. It also removes some part of the theoretical overhead incurred by established mathematical vector bundle descriptions, while preserving the main features of gauge symmetry properties.

We equipped our discrete model with suitable Laplacians and associated heat kernels and analyzed them using gauge theoretic methods. We showed that our gauged Laplacian is distinguished by the property of being the only gauge invariant operator which satisfies a \textit{local Laplacian} and a \textit{ tree blow-up lifting} property, formulated as Theorem \ref{thm:axioms}.
Furthermore, we analyzed the interplay of our discrete bundle methods and the \textit{Yang-Mills formalism} from high energy physics, showing that Yang-Mills analysis provides a useful tool for \textit{synchronization analysis}, as shown by Theorem \ref{thm:eym}.

\vspace{0.25cm}
Future directions for research may further extend our approach towards machine learning, discrete gauge theory and topological graph theory.
As explained in the Section \ref{sec:hks}, the heat kernels that we studied in this work appeared in several works on machine learning on graphs \cite{bodnarNeuralSheafDiffusion2023,bambergerBundleNeuralNetworks2024,cassel2025bundle}.
In such scenarios, one is often confronted with loosely structured data such that one cannot rely on structures such as graph voltages as part of given data. Thus, a natural research task is to \textit{learn the required structures from data}. This approach amounts to interpreting the geometric structures of discrete vector bundles (in the present case, graph voltages $\omega$) as a parameter of a machine learning architecture, to be estimated by minimizing a suitable loss function.
The resulting approaches are promising candidates for removing persistent limitations of graph neural networks, e.g.~in connection with \textit{node classification on heterophilic graphs}.

In particular, the recent work \cite{bodnarNeuralSheafDiffusion2023} showed that the kernel of the associated Laplacian is a crucial quantity for understanding the characteristics of vector bundle heat kernel methods in the context of machine learning.
Our work shows that the \textit{Yang-Mills energy} could serve as a \textit{gauge invariant regularizing term} in learning scenarios, that preserves gauge invariance but also controls the kernel of the relevant Laplacian operator. Exploring these connections will also be part of our future work.

Furthermore, we next aim to analyze how discrete vector bundle concepts may profit from insights from topological graph theory about voltage graphs. In particular, we wish to connect our present analysis of the synchronizability of graph voltages to the theory of \textit{derived graphs}, which is an established concept of topological graph theory \cite{grossTopologicalGraphTheory1987}.

\appendix
\section{From Smooth Bundles to Discrete Ones}\label{sec:app}
\newcommand{\g}{\mathfrak{g}}

In this section we clarify how the smooth theory of vector bundles \cite{hamiltonMathematicalGaugeTheory2017,weilerCoordinateIndependentConvolutional2021,gerkenGeometricDeepLearning2021} relates to the discrete framework presented in Sections \ref{sec:bundles} and \ref{sec:hks}.

Consider the situation depicted by Figure \ref{fig:embed-graph}: 
$M$ is be a smooth, compact manifold without boundary. A given graph $\eta$ is embedded into $M$, with embedded vertices $p_i \in M, i \in \nu$ and connected by smooth arcs $\gamma_{ij} : [0,1] \to M$ that represent the edges of the embedded graph. We denote the embedding by $\iota : \eta \to M$.
Assume further that $M$ is equipped with a Riemannian metric $\g$. We obtain a positive weight function $\kappa : \nu \times \nu \to \R_{\geq 0}$ on $\eta$, defined by
\begin{equation}\label{eq:metric-weights}
	\kappa_{ij} = \o{len}_{\g}(\gamma_{ij})^{-2} \in \R_{\geq 0},
	\quad \forall i \sim j \in \veps,
\end{equation}
where $\o{len}_{\g}(\gamma_{ij})$ is the length of the path $\gamma_{ij}$ as measured by the metric $\g$\footnote{
	The inverse squared distance is chosen here, so as to comply with the result in Proposition \ref{prop:smooth-L}, but different choices are conceivable at this point.
}.
Thus, we consider the edge weights $\kappa$ as discrete representatives of the metric $\g$ on the graph $\eta$.

\begin{remark}[graph embeddings exist]\label{rem:graph-embedding-exists}
	It is generally possible to find such embeddings for any given graph. In fact, for any graph $\eta$, there is a manifold $M$ \textbf{of dimension 2}, such that $\eta$ can be embedded into $M$ \cite{grossTopologicalGraphTheory1987}.
\end{remark}

\begin{definition}[weighted graphs from embeddings]
	Let $\eta$ be a given graph and $\iota : \eta \to M$ a graph embedding, where $M$ is a Riemannian manifold with metric $\g$. Then $\iota$ induces a symmetric positive weight function $\kappa$ on $\eta$ via Equation \eqref{eq:metric-weights}, denoted with abuse by 
\begin{equation}\label{eq:def-kappa}
\kappa \coloneqq \iota^{*} \g.
\end{equation}
\end{definition}
We point out that the literal mathematical interpretation of \eqref{eq:def-kappa} does \textit{not} make sense. The function $\iota^{*}\g$ is not a Riemannian metric on the zero dimensional manifold $\nu$, but a positive weight function on the graph $\eta$. Yet we stick to the established pullback notation \eqref{eq:def-kappa} because it clearly indicates the role of the weight function $\kappa$.

\begin{figure}[t]
	\includegraphics[width=0.9\textwidth]{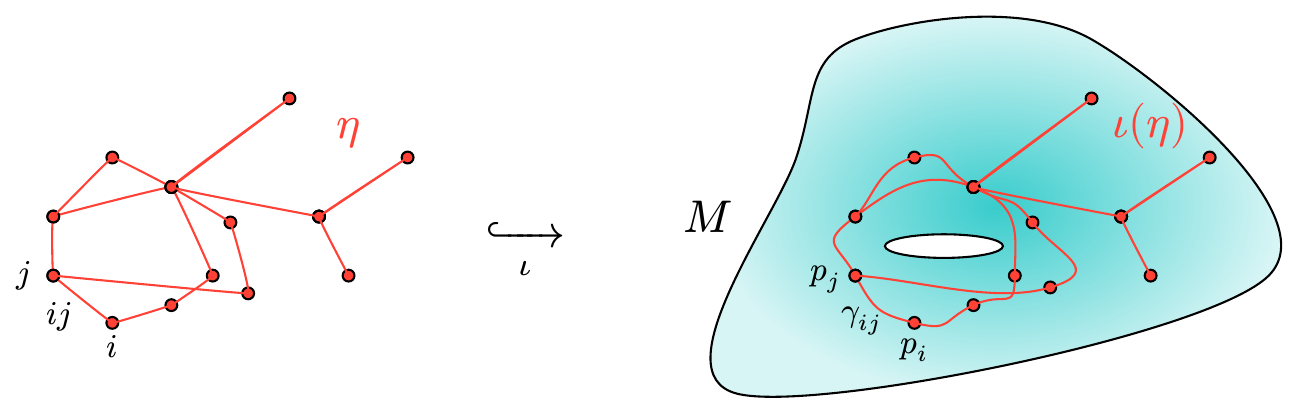}
	\caption{A smooth Riemannian manifold and an embedded graph $\eta$.}\label{fig:embed-graph}
\end{figure}

\begin{subsection}{Discrete Bundles from Smooth Bundles}
	We now describe the relationship between smooth associated vector bundles $E \to M$ and the discrete bundle sections $\Gamma$ introduced by Definition \ref{def:discrete-bundles}. This is a crucial step for connecting our discrete approach and the machinery of smooth vector bundles.
	
	The following result shows that discrete vector bundles arise indeed from smooth associated vector bundles through a pullback mapping.

	\begin{proposition}[discrete pullback bundles]\label{prop:discrete-pullback-bundles}
		Let $P \to M$ be a principal $\SO(d)$ bundle over $M$ and $E \coloneqq P \times_{\rho} \R^d$ the associated vector bundle for the representation $\rho$ of \, $\SO(d)$ on $\R^d$, under which a $\SO(d)$-rotation matrix acts on an $\R^d$-vector by matrix-vector multiplication. We denote by $E_p$ the fiber of $E$ at the point $p$.
		Assume that the graph $\eta$ is embedded into $M$, with embedding $\iota : \eta \hookrightarrow M$.
		Then one has an isomorphism
		\begin{equation}\label{eq:iota-star}
			\iota^{*}E \coloneqq \mathop{\dot{\bigcup}}_{i \in \nu} E_{p_i} \cong \nu \times Y,
		\end{equation}
		where $Y \coloneqq (\SO(d) \times \R^{d}) / \SO(d)$. The set $\iota^{*}E$ is a vector bundle over the zero dimensional base manifold $\nu$ and its smooth sections are given by
		\begin{equation}
			\Gamma(\iota^{*}E) \cong Y^{N}, \qquad N = |\nu|.
		\end{equation}
	\end{proposition}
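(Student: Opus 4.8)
The plan is to exploit that $\nu$ is a \emph{zero-dimensional} manifold, over which every fiber bundle is trivial and every set-theoretic section is automatically smooth. First I would record the structural fact that the pullback $\iota^{*}E$, as defined by \eqref{eq:iota-star}, is a vector bundle over $\nu$ whose fiber over $i \in \nu$ is the vector space $E_{p_i}$. Since the base $\nu$ carries the discrete topology, there is no connectivity to obstruct a global trivialization: such a trivialization amounts to choosing, independently at each node $i$, a linear isomorphism $E_{p_i} \cong Y$. The entire content of the statement is therefore to produce these fiberwise isomorphisms and to identify the resulting section space.

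Second, to identify $E_{p_i} \cong Y$ I would use that the fiber $P_{p_i}$ of the principal bundle is an $\SO(d)$-torsor. Choosing any frame $u_i \in P_{p_i}$ yields an $\SO(d)$-equivariant bijection $\SO(d) \to P_{p_i}$, $g \mapsto u_i g$, which after taking the product with $\R^d$ and passing to the associated-bundle quotients descends to a linear isomorphism
\begin{equation}
\Phi_i : Y = (\SO(d) \times \R^d)/\SO(d) \;\xrightarrow{\ \cong\ }\; (P_{p_i} \times \R^d)/\SO(d) = E_{p_i}, \qquad [g,v] \mapsto [u_i g, v].
\end{equation}
The routine checks are that $\Phi_i$ is well-defined on equivalence classes (it intertwines the two right $\SO(d)$-actions, using the relation $[u h,v]=[u,\rho(h)v]$ on both sides) and linear with respect to the vector space structures on $Y$ and $E_{p_i}$. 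Assembling these node-wise maps gives the bundle isomorphism
\begin{equation}
\nu \times Y \;\xrightarrow{\ \cong\ }\; \iota^{*}E, \qquad (i,[g,v]) \mapsto (i, \Phi_i([g,v])),
\end{equation}
which proves the first claim.

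Third, for the section space I would note that a smooth section of $\iota^{*}E \to \nu$ is precisely a choice of $s(i) \in E_{p_i}$ for each $i$, with no smoothness constraint because $\nu$ is discrete. Hence $\Gamma(\iota^{*}E) = \prod_{i\in\nu} E_{p_i}$, and transporting through the isomorphisms $\Phi_i$ gives $\Gamma(\iota^{*}E) \cong Y^N$. This also recovers exactly the space $\Gamma$ of Definition \ref{def:discrete-bundles}, cf.~\eqref{eq:gamma}, confirming the interpretation of $\Gamma$ as a discretization of smooth sections.

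The one genuine subtlety, which I would flag rather than suppress, is that the isomorphisms $\Phi_i$, and hence the trivialization $\iota^{*}E \cong \nu \times Y$, are \emph{not canonical}: they depend on the chosen frames $u_i$, and a different choice $u_i' = u_i h_i$ changes $\Phi_i$ by the gauge transformation $\rho(h_i)$. This is not a defect but the whole point of the gauge-theoretic viewpoint, and it is precisely the $\SO(d)^N$-ambiguity encoded in the quotient description of $\Gamma$. I expect the only real care in the write-up to go into verifying the well-definedness and equivariance of $\Phi_i$ on the quotients, everything else being a formal consequence of the discreteness of $\nu$.
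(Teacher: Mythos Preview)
Your proposal is correct and follows essentially the same approach as the paper: choose a frame (equivalently, an isomorphism $P_{p_i}\cong\SO(d)$) at each vertex to obtain $E_{p_i}\cong Y$, then use discreteness of $\nu$ to conclude that sections are just $Y^{N}$. You are slightly more explicit about the equivariant map $\Phi_i$ and you flag the non-canonicity of the trivialization, both of which are welcome additions; the paper justifies the simultaneous choice by finiteness of $\nu$, whereas your emphasis on zero-dimensionality is the more relevant point for why no topological obstruction arises.
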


	\begin{proof}
		We note that the fibers are given by
		\begin{equation}
			E_{p_{i}} = (P_{p_{i}} \times \R^d) / \SO(d),
		\end{equation}
		where $P_{p_{i}}$ is the fiber of the bundle $P$ at $p_i$.
		Now choose for every $i \in \nu$ an isomorphism $P_i \cong \SO(d)$. This is possible for all the $P_{p_i}$ \textit{simultaneously}, since the set $\nu$ is \textit{finite}. These choices induce an isomorphism $E_{p_{i}} \cong Y$ for all $i \in \nu$. Collecting these isomorphisms yields
		\begin{equation}
			\mathop{\dot{\bigcup}}_{i \in \nu} E_{p_{i}} \cong \nu \times Y.
		\end{equation}
		The sections of the bundle $\iota^{*}E$ can then be calculated as
		\begin{equation}
			\Gamma(\iota^{*}E) \cong
			\left\{ \phi : \nu \to \nu \times Y \mid \phi \in C^{\infty}, \; \exists v : \nu \to Y \text{ s.t. } \phi = \mathrm{id} \times v \right\}.
		\end{equation}
		Since $\nu$ is zero-dimensional, the smoothness condition $\phi \in C^{\infty}$ is void and we obtain
		\begin{equation}
			\Gamma(\iota^{*}E) \cong \left\{ v : \nu \to Y \right\} = Y^N, \quad N = |\nu|.
		\end{equation}
	\end{proof}

	\begin{corollary}
		Let $\iota : \eta \hookrightarrow M$ be an embedded graph and $E \to M$ be a $\SO(d)$-associated vector bundle. Then there is an induced mapping $\iota^{*} : \Gamma(E) \to \Gamma$ by pulling back sections along $\iota$, after a choice of trivialization $\iota^{*}E \cong \nu \times Y$.
	\end{corollary}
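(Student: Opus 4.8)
The plan is to obtain $\iota^{*}$ as the composite of two constructions: restricting a smooth section of $E$ to the embedded vertices, and then identifying the resulting section of the pullback bundle with an element of $\Gamma$ via Proposition \ref{prop:discrete-pullback-bundles}.

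First I would define the restriction. Given a smooth section $s \in \Gamma(E)$, I would set
\begin{equation}
	(\iota^{*}s)(i) \coloneqq s(p_i) \in E_{p_i}, \qquad i \in \nu,
\end{equation}
where $p_i = \iota(i)$ is the embedded vertex. Since each value $s(p_i)$ lies in the fiber over $p_i$, the assignment $i \mapsto (\iota^{*}s)(i)$ is precisely a section of the pullback bundle $\iota^{*}E = \mathop{\dot{\bigcup}}_{i \in \nu} E_{p_i}$ from \eqref{eq:iota-star}. This construction is manifestly linear, because the fibers $E_{p_i}$ are vector spaces and the operations are performed fiberwise; hence it yields a linear map $\Gamma(E) \to \Gamma(\iota^{*}E)$.

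Next I would invoke Proposition \ref{prop:discrete-pullback-bundles}, which—after a choice of trivialization $\iota^{*}E \cong \nu \times Y$—provides an isomorphism $\Gamma(\iota^{*}E) \cong Y^{N}$ with $N = |\nu|$. Combining this with the identity $\Gamma = Y^{N}$ recorded in \eqref{eq:gamma}, I would define $\iota^{*}$ as the composite $\Gamma(E) \to \Gamma(\iota^{*}E) \cong Y^{N} = \Gamma$, which is the claimed mapping and is again linear as a composition of linear maps.

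The main point to handle carefully is not the existence of the map but the trivialization step: one must check that the two occurrences of $Y^{N}$, once as $\Gamma(\iota^{*}E)$ and once as $\Gamma$, are identified compatibly. Since the structure group $\SO(d)$ and the model fiber $\R^{d}$ agree on both sides, and $\nu$ is finite, this identification is canonical once the simultaneous fiberwise isomorphisms $P_{p_i} \cong \SO(d)$ used in the proof of Proposition \ref{prop:discrete-pullback-bundles} are fixed. The dependence of $\iota^{*}$ on these choices is exactly the gauge freedom that the abstract space $\Gamma = (\Xi \times X)/\Xi$ is designed to absorb, so no additional well-definedness argument beyond the chosen trivialization is required.
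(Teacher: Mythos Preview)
Your proposal is correct and matches the paper's intent: the corollary is stated there without proof, as an immediate consequence of Proposition~\ref{prop:discrete-pullback-bundles}, and your argument (restrict a section to the embedded vertices, then apply the trivialization from the proposition to land in $Y^{N}=\Gamma$) is exactly the natural way to spell this out. Your final paragraph on gauge freedom is a nice clarification but not strictly needed for the bare existence claim.
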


	\begin{remark}[geometric situation]
	We explicate Proposition \ref{prop:discrete-pullback-bundles} from the geometric viewpoint. 
		We regard the vertex set $\nu$ as a zero dimensional manifold and the embedding $\iota$ as a smooth embedding of manifolds $\nu \hookrightarrow M$ (where the smoothness condition here is void since $\nu$ is zero-dimensional).
		Accordingly, we define a vector bundle over $\nu$ through pulling back $E$ along $\iota$, resulting in the vector bundle $\iota^{*}E \to \nu$ over the base space $\nu$.
	\end{remark}
	We showed that the sections of the pullback bundle $\iota^*E$ are isomorphic to $Y^N$, after a choice of trivialization. On the other hand, $Y^N = \Gamma$ also represents the \textit{discrete vector bundle} of our choice for gauge invariant data modelling, see \eqref{eq:gamma}. This shows that discrete vector bundles can be obtained from smooth ones $E$ in terms of the sections of $\iota^*E$. 

	\begin{remark}[topological obstructions]
		If the set $\nu$ would be a manifold of dimension $\geq 1$ and we were interested in smoothly varying isomorphisms $P_{p_i} \cong \SO(d)$, then we could encounter topological obstructions.
		The absence of obstructions relies on the fact that the set $\nu$ is discrete such that every bundle over it is topologically trivial -- cf.~\eqref{eq:iota-star}, whereas bundles over $M$ need not be trivial. 
		
		We refer to Section \ref{sec:ym}, however, where we show that the limitation of bundle triviality over graphs can be overcome, by working with suitably chosen voltage graphs. While the sections of vector bundles over $\nu$ cannot encode topological information about the graph $\eta$ or the manifold $M$, we show that the space of graph voltages $\mc{A}$ \textit{depends} on the topology of the underlying graph, see Theorems \ref{thm:holonomy-obstruction} and \ref{thm:eym}. Furthermore, if the embedding $\iota: \eta \hookrightarrow M$ (cf.~Figure \ref{fig:embed-graph}) is such that the topology of $M$ is compatible with the topology of the graph $\eta$, then the space of graph voltages $\mc{A}$ over $\eta$ indeed reflects some topological properties of $M$.
	\end{remark}
\end{subsection}

\begin{subsection}{Graph Voltages from Parallel Transport}
	We further detail the connection between graph voltages and parallel transport operators on associated vector bundles. 

	\begin{figure}[t]
		\includegraphics[width=0.55\textwidth]{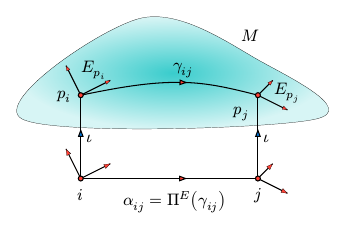}
		\caption{Relationship between parallel transport and graph voltages.}
		\label{fig:parallel-voltage}
	\end{figure}

	We first recall few facts about parallel transport on vector bundles \cite{baumEichfeldtheorieEinfuehrungDifferentialgeometrie2014,hamiltonMathematicalGaugeTheory2017}. Due to the $\SO(d)$-structure of the bundle $E$, we also have a \textit{bundle metric} on $E$, which is a scalar product $\la \cdot , \cdot \ra_{p}$ on every fiber $E_{p}$ with associated norm $\n{\cdot}_p$, for every $p \in M$. A bundle metric promotes the fibers of $E$ to metric spaces. A \textit{parallel transport operator} along a given curve $\gamma : [0,1] \to M$ on the associated vector bundle $E$ is a linear operator $\Pi^{E}(\gamma) : E_{\gamma(0)} \to E_{\gamma(1)}$ with the following properties.
	\begin{enumerate}
		\item $\Pi^{E}(\gamma)$ does not depend on the parametrization of the curve $\gamma$.

		\item $\Pi^{E}(\gamma)$ is an \textit{orientation preserving isometry}.

		\item If $\gamma^{-1}$ denotes the curve in reverse orientation, then $\Pi^{E}(\gamma^{-1}) : E_{\gamma(1)} \to E_{\gamma(0)}$ is the inverse linear mapping to $\Pi^{E}(\gamma)$, that is $\Pi^{E}(\gamma^{-1}) = \Pi^{E}(\gamma)^{-1}$.
	\end{enumerate}
	That is, parallel transport is an identification of fibers of a given bundle along a given curve $\gamma$, attached to the endpoints of the curve; see Figure \ref{fig:parallel-voltage}. Parallel transport operators of this kind exist and can be constructed in terms of the \textit{horizontal lifting construction} on the principal bundle $P$, see \cite[Sections 5.8, 5.9]{hamiltonMathematicalGaugeTheory2017}.

	\begin{proposition}\label{prop:gaute-to-volt}
		Let $P$ be an principal $\SO(d)$ bundle over $M$ with embedded graph $\iota : \eta \to M$, and let $E = P \times_{\rho} \R^{d}$ be the associated vector bundle with respect to the standard representation of $\SO(d)$. Furthermore, let $\Pi^E$ denote a parallel transport operator on the bundle $E$ and $\zeta: \iota^* E \cong \nu \times Y$ a choice of trivialization. Then $\Pi^E$ induces a gauge invariant graph voltage $\omega=[\mbf{1}_\Xi,\alpha]$ via the commutative diagram
		\begin{equation}
			\begin{tikzcd}
				& E_{p_i} \ar[dl,"\zeta_i"]\ar[r,"\Pi^E(\gamma_{ij})"] & E_{p_j} \ar[dr,"\zeta_j"] & \\
				Y \ar[dr,"\zeta_{0}"] & & & Y \ar[dl,"\zeta_{0}"] \\
			  &	\R^d \ar[r,"\alpha_{ij}"] & \R^d &
			\end{tikzcd}, 
		\end{equation}
		where $\zeta_0 : Y \to \R^d, [g,x] \mapsto gx$ is the canonical isomorphism from $Y$ to $\R^d$ and $\mbf{1}_\Xi$ is the identity element of of the group $\Xi = \SO(d)^N$. 
		The gauge invariant voltage is also denoted by 
		\begin{equation}\label{eq:omega-via-E}
			\omega \coloneq \iota^{*}\Pi^{E}.
		\end{equation}
	\end{proposition}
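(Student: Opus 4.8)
The plan is to read the matrix $\alpha_{ij}$ off the commutative diagram and then check that the collection $\alpha=(\alpha_{ij})$ is a legitimate $\SO(d)$-voltage, so that $\omega=[\mbf{1}_\Xi,\alpha]$ is a well-defined element of $\Omega$. Commutativity of the square forces, for every oriented edge $i\sim j$,
\[
  \alpha_{ij}=(\zeta_0\circ\zeta_j)\circ\Pi^E(\gamma_{ij})\circ(\zeta_0\circ\zeta_i)^{-1}\colon\R^d\to\R^d ,
\]
and I would set $\alpha_{ij}=\II$ on non-adjacent pairs, in accordance with the voltage convention. Everything then reduces to verifying the two defining properties of a voltage for this composite, namely $\alpha_{ij}\in\SO(d)$ and $\alpha_{ji}=\alpha_{ij}^{-1}$.

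First I would show $\alpha_{ij}\in\SO(d)$ by arguing that each of the three composed maps is an orientation-preserving isometry of inner-product spaces. The canonical map $\zeta_0\colon Y\to\R^d,\ [g,v]\mapsto gv$ is an isometry since $\n{gv}=\n{v}$ for $g\in\SO(d)$. The fiber isomorphisms $\zeta_i\colon E_{p_i}\to Y$ come from the choices $P_{p_i}\cong\SO(d)$ underlying the trivialization $\zeta$ (cf.\ Proposition~\ref{prop:discrete-pullback-bundles}); because $E$ inherits a bundle metric and orientation from the $\SO(d)$-reduction of its structure group, the composite $\zeta_0\circ\zeta_i\colon E_{p_i}\to\R^d$ is an orientation-preserving linear isometry. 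Finally $\Pi^E(\gamma_{ij})$ is an orientation-preserving isometry by property~(2) of parallel transport. A composition of such maps is an orientation-preserving isometry of $\R^d$, i.e.\ an element of $\SO(d)$.

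Next I would establish the voltage axiom. Since $\gamma_{ji}$ is $\gamma_{ij}$ traversed in reverse, property~(3) of parallel transport gives $\Pi^E(\gamma_{ji})=\Pi^E(\gamma_{ij})^{-1}$; substituting this into the defining composite and inverting yields $\alpha_{ji}=(\zeta_0\zeta_i)\,\Pi^E(\gamma_{ij})^{-1}\,(\zeta_0\zeta_j)^{-1}=\alpha_{ij}^{-1}$. Hence $\alpha\in\mc{A}$ and $\omega=[\mbf{1}_\Xi,\alpha]\in\Omega$ is a bona fide gauge invariant voltage, which I would denote $\iota^{*}\Pi^{E}$. To close the circle I would add the remark that a different trivialization $\zeta'$ differs from $\zeta$ by frames $\xi_i\in\SO(d)$, that is by a gauge $\xi\in\Xi$; this conjugates the composite and replaces $\alpha$ by $\xi\alpha$, so that the induced class transforms exactly by the $\Xi$-action of \eqref{def-action-Xi-A}, which is why the construction is intrinsic once the frame gauge is recorded.

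The step I expect to be the main obstacle is the proof that $\alpha_{ij}\in\SO(d)$: it is not enough that the maps are linear isomorphisms, which would only place $\alpha_{ij}$ in $\GL_d(\R)$. The delicate point is to use the $\SO(d)$-reduction of the structure group of $E$ --- i.e.\ the bundle metric together with a consistent orientation --- to guarantee that the trivialization maps $\zeta_0\circ\zeta_i$ are simultaneously isometric \emph{and} orientation-preserving, so that the orthogonality and determinant-one constraints are both met.
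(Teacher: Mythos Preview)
Your proposal is correct and follows essentially the same route as the paper's proof: define $\alpha_{ij}$ as the composite dictated by the diagram, argue that it lies in $\SO(d)$ because $\zeta_0$, $\zeta_i$, and $\Pi^E(\gamma_{ij})$ are all orientation-preserving isometries, and verify $\alpha_{ji}=\alpha_{ij}^{-1}$ via property~(iii) of parallel transport. Your added remark on how a change of trivialization acts by the $\Xi$-action is a nice complement that the paper's proof omits.
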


	\begin{proof}
		For all nodes $i \in \nu$, we can find isomorphisms $\zeta_i : E_{p_i} \to Y$. Then the linear isomorphisms $\Pi^E(\gamma_{ij}) : E_{p_i} \to E_{p_j}$ define linear operators $Y \to Y$.
		In conjunction with $\zeta_0$, these isomorphisms induce a linear mapping $\alpha_{ij} : \R^d \to \R^d$ representing $\Pi^E(\gamma_{ij})$. 
		Note that the maps $\zeta_{0}$ and $\zeta_i$ are linear orientation-preserving isometries, as is $\Pi^{E}(\gamma_{ij})$. This implies that $\alpha_{ij}$ is an orientation-preserving isometry, i.e. $\alpha_{ij} \in \SO(d)$.
		In addition, for the reverse paths $\gamma_{ji} = \gamma_{ij}^{-1}$, one has $\Pi^E(\gamma_{ji}) = \Pi^E(\gamma_{ij})^{-1}$ and consequently
		\begin{equation}
			\alpha_{ji} = \alpha_{ij}^{-1}.
		\end{equation}

		Isometries $\alpha_{ij}$ are represented by the graph voltage $\alpha \in \mc{A}$ and we can construct a corresponding a gauge invariant voltage by setting $\omega = [\mbf{1}_\Xi,\alpha]$.
	\end{proof}

	\begin{remark}[relationship between gauge fields and parallel transport]
		The common way to parametrize parallel transport operators in gauge theory is to specify \textit{gauge fields}, which is the term in physics used for the notion of \textit{connections} from differential geometry \cite{hamiltonMathematicalGaugeTheory2017}. A gauge field $B$ is a Lie algebra valued one-form, with additional properties. The relationship to parallel transport operators $\Pi^{E}$ is given by \cite{Gattringer:2010zz}
		\begin{equation}
			\Pi^{E}(\gamma) = \mc{P} \exp \left(\int_{\gamma} B \right),
		\end{equation}
		where $\mc{P} \exp$ denotes the \textit{path ordered exponential}, $\int_{\gamma} B$ integrates the pullback of the gauge field along $\gamma$, and the full expression is the solution to the parallel transport ODE associated to $B$. This fact is explained in detail in \cite[Section 5.10]{hamiltonMathematicalGaugeTheory2017}.
	\end{remark}

\end{subsection}

\subsection{Discrete Laplacians from Covariant Derivatives}
	We extend the relationship between the discrete and continuous theories by stating an approximation result that connects respective energy functions. We assume in this subsection that we are given an embedded graph $\iota : \eta \to M$, an $\SO(d)$-associated bundle $E \to M$ with parallel transport operator $\Pi^{E}$ and a Riemannian metric $\g$ on $M$. 

In order to state our result, we recall few more notions that are established in the smooth case. Recall the bundle metric $\la\cdot,\cdot\ra_p$ on $E$. This bundle metric also defines an inner product on the $E$-valued one-forms $\Omega^{1}(M,E)$, together with the Riemannian metric $\g$, through
\begin{equation}\label{eq:sigma-times-0}
\la \sigma,\sigma'\ra_{p} 
:= \sum_{a,b=1}^{n} \g^{a b}(p)\la \sigma_{p}(\partial_{a}),\sigma'_{p}(\partial_{b})\ra_{p},\quad
\sigma, \sigma' \in \Omega^{1}(M,E),
\end{equation}
with $\sigma_{p}, \sigma'_{p}\in\mathrm{Hom}(T_{p}M, E_{p})$ and basis vectors $\{\partial_{a}\}_{a=1}^{n}\subset T_{p}M$ and $\g^{ab}(p)$ the inverse matrix to $\g_{ab}(p)$, which is the representing matrix of the metric $\g$ in the basis $\{\partial_{a}\}_{a=1}^{n}$.

This expression can be simplified in terms of \textit{local orthonormal frame fields}. A local frame field at a point $p \in M$ is a set of vector fields $e_{A}, A \in \{1,\dots, n\}$, all defined on an open neighborhood $U$ of $p$, with component functions $e^{a}_{A},\, a\in\{1,\dotsc,n\}$ such that
\begin{equation}
	\sum_{a,b = 1}^{n} \g_{ab}(p) e_{A}^{a}(p)  e^{b}_{B}(p) = \delta_{AB},\quad \forall A, B\in\{1,\dotsc,n\},\quad\forall p\in U.
\end{equation}
For any given point $p$, we can find a small neighborhood around it, such that a local orthonormal frame field exists. The vectors $e_{A}$ form an orthonormal frame for the bilinear form $\g_{ab}(p)$ at every $p \in U$. We can infer, that the associated Gramian matrix is the inverse bilinear form 
\begin{equation}\label{eq:inverse-frame}
	\sum_{A}^{n}  e^{a}_{A}(p) e^{b}_{A}(p) = \g^{ab}(p), \quad \forall a,b \in \{1,\dots,n\}, p \in U.
\end{equation}

This enables to rewrite \eqref{eq:sigma-times-0} in the form
\begin{subequations}\label{eq:sigma-times}
\begin{align}
\la\sigma,\sigma'\ra_{p}
&=
\sum_{a,b=1}^{n} \g^{ab}(p) \la \sigma_{p}(\partial_{a}), \sigma_{p}(\partial_{b})\ra_p \\ 
\overset{\eqref{eq:inverse-frame}}&{=}
\sum_{a,b,A}^{n} e^a_{A} e^b_{A} \la \sigma_{p}(\partial_a), \sigma_{p}(\partial_{b}) \ra_p \\
&=\sum_{a,b,A=1}^{n} \la \sigma_{p}(e^{a}_{A}\partial_a) , \sigma_{p}(e^{b}_{A} \partial_b) \ra_p \\
&=\sum_{A=1}^{n} \Big\la \sigma_{p}\Big(\sum_{a=1}^n e^{a}_{A}\partial_a\Big) , \sigma_{p}\Big(\sum_{b=1}^n e^{b}_{A} \partial_b\Big) \Big\ra_p \\
&=\sum_{A=1}^{n} \la\sigma_{p}(e_A),\sigma'_{p}(e_A)\ra_{p},\quad\forall p\in U.
\end{align}
\end{subequations}
where the third step follows by linearity of $\sigma_p$ and the last step from the definition of the component functions $e_A = \sum_{a=1}^n e^a_A \partial_A$.

Given a parallel transport operator $\Pi^{E}$ on $E$, one defines a \textit{covariant derivative for bundle sections} as follows \cite{hamiltonMathematicalGaugeTheory2017}. In order to compute the derivative $\nabla_{v}\Phi(p)$ for $v \in T_{p}M$ and $\Phi \in \Gamma(E)$, first take any curve $\gamma : [0,\veps) \to M$ with $\gamma(0) = p, \dot{\gamma}(0) = v$. Then define the function
	\begin{equation}
		S_{\Phi} : [0,\veps) \to E_{p}, \quad S_{\Phi}(t) = \Pi^{E}(\gamma|_{[0,t)})^{-1} \Phi(\gamma(t)) \in E_{p},
	\end{equation}
	where $\gamma|_{[0,t)}$ is the curve $\gamma$ restricted to the interval $[0,t)$. Then define
	\begin{equation}\label{eq:cov-der}
		\nabla_{v} \Phi(p) \coloneqq \lim_{t\to 0^{+}} \frac{S_{\Phi}(t) - S_{\Phi}(0)}{t} \in E_{p}.
	\end{equation}
	This defines a distinguished mapping
	\begin{equation}
		\nabla : \Gamma(E) \to \Omega^{1}(M,E), \quad \Phi \mapsto \nabla \Phi.
	\end{equation}
	We associate the \textit{Laplacian} $\Delta_{E}$ with the covariant derivative $\nabla$ by 
\begin{equation}
	\Delta_{E} : \Gamma(E) \to \Gamma(E),\qquad
	\Delta_{E} = \nabla^{*} \circ \nabla, 
\end{equation}
where $\nabla^{*} : \Omega^{1}(M,E) \to \Gamma(E)$ is the adjoint operator to $\nabla$ with respect to scalar products
	\begin{equation}
		\lla \Phi , \Phi' \rra_{E} \coloneq \int_{M} \la \Phi(p),\Phi'(p)\ra_{p} \sqrt{\det \g(p)} \dd^{n}p, \quad \Phi,\Phi' \in \Gamma(E)
	\end{equation}
	and
	\begin{equation}
		\lla \sigma,\sigma' \rra_{E} \coloneq \int_{M} \la \sigma(p),\sigma'(p) \ra _{p} \sqrt{\det \g(p)} \dd^{n}p, \quad \sigma,\sigma' \in \Omega^{1}(M,E)
	\end{equation}
	respectively. The adjoint $\nabla^*$ is defined by
	\begin{equation}
		\lla \nabla \Phi , \sigma \rra_{E} = \lla \Phi , \nabla^{*} \sigma \rra_{E}, \quad \forall \Phi \in \Gamma(E) ,\quad  \sigma \in \Omega^{1}(M,E)
	\end{equation}
	implying
	\begin{equation}
		\lla \Phi,\Delta_{E} \Phi \rra_{E} = \lla \Phi, \nabla^{*} \nabla \Phi \rra_{E} = \lla \nabla \Phi , \nabla \Phi \rra_{E} \geq 0
	\end{equation}
	assuming the manifold $M$ is compact and without boundary.

	\begin{figure}
		\includegraphics[width=0.45\textwidth]{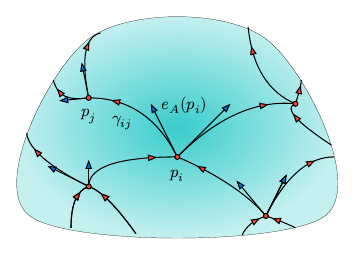}
		\caption{
		Illustration of the conditions for Proposition \ref{prop:smooth-L}. Blue arrows indicate the local frame.
		}
		\label{fig:frame}
	\end{figure}

	\begin{proposition}[discrete and smooth Dirichlet energy \cite{Gattringer:2010zz}]
	\label{prop:smooth-L}
		Let $\iota : \eta \to M$ be an embedded graph and $E \to M$ an $\SO(d)$-associated vector bundle with parallel transport operator $\Pi^{E}$. Assume that the manifold $M$ is endowed with the Riemannian metric $\g$ and that the following criteria are fulfilled.
		\begin{enumerate}
			\item $\eta$ is is a \textit{regular} graph with equal degree $2\dim M$ at every node $i \in \nu$, oriented such that every node has the equal number of in- and out-edges.

			\item $M$ is covered by open sets $U_i, i \in \nu$, such that $\cup_{i \in \nu} U_i = M$ and $\iota(i) = p_i \in U_i$ and $\iota(j) = p_j \not\in U_i$ for all $j \neq i$.

			\item Let $\{\beta_i\}_{i=1}^{n}$ denote a partition of unity with respect to this covering. We have the volume normalization condition
			\begin{equation}\label{eq:norm}
				\int_{U_i} \beta_{i} \sqrt{\det \g} = \o{vol}_{\g}(U_{i}) = 1,\quad\forall i\in\nu.
			\end{equation}

			\item Let $e_A$ denote a local orthonormal frame for the metric $\g$, defined in all points $p_i, i \in \nu$. For all $i \in \nu$ and $A \in \{1,\dots,n\}$, there is a unique neighbor $j \in \nu$ and $j > i$ with path $\gamma_{ij} : p_i \to p_j$ such that $\dot \gamma_{ij}(0) = e_A(p_i)$; see Figure \ref{fig:frame} for an illustration.
		\end{enumerate}
		Then, for any section $\Phi \in \Gamma(E)$, one has
		\begin{equation}
		    \lla \Phi, \Delta_{E} \Phi \rra_{E} = \lla \phi, \hgen_{\chi} \phi \rra_Z + \mc{O}(C) + \mc{O}(C'),
		\end{equation}
		where $\lla \cdot,\cdot \rra_Z$ was defined in \eqref{eq:def-bracket-z}, \eqref{eq:def-zi} here with $\kappa_{ij}=\lambda_{ij}^{-2}, \lambda_{ij} = \o{len}_{\g}(\gamma_{ij})$, and (recall \eqref{eq:def-kappa} and \eqref{eq:omega-via-E})
		\begin{equation}
			\chi = (\eta,\iota^{*}\g,\iota^{*}\Pi^{E}), \quad \phi = \iota^{*} \Phi,
		\end{equation}
		after a choice of trivialization $\zeta: \iota^*E \to \nu \times Y$. The constants $C,C'$ are given by
		\begin{equation}
			C = \max_{i \in \nu} \sup_{p \in U_i} \left| \n{\nabla \Phi(p)}_{p}^2 - \n{\nabla \Phi(p_i)}_{p_{i}}^2 \right|, \qquad 
			C' = \max_{\substack{i,j \in \nu \\ i \sim j}} \lambda_{ij}^{2}.
		\end{equation}
	\end{proposition}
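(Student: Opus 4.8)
The plan is to convert the smooth Dirichlet energy $\lla\Phi,\Delta_E\Phi\rra_E=\lla\nabla\Phi,\nabla\Phi\rra_E=\int_M\n{\nabla\Phi(p)}_p^2\sqrt{\det\g(p)}\,\dd^np$ into the discrete quadratic form of Lemma~\ref{lem:dip} through a localization in three stages: replace the integral over $M$ by a sum over the embedded nodes $p_i$, decompose the fibrewise norm along the orthonormal frame, and approximate each covariant derivative by a parallel-transport finite difference along the incident edges.

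For the first stage I would use the partition of unity $\{\beta_i\}$ together with the volume normalization \eqref{eq:norm}. Writing $\int_M(\cdot)=\sum_{i}\int_{U_i}\beta_i(\cdot)\sqrt{\det\g}$ and estimating $\big|\n{\nabla\Phi(p)}_p^2-\n{\nabla\Phi(p_i)}_{p_i}^2\big|\le C$ on each $U_i$, the normalization $\int_{U_i}\beta_i\sqrt{\det\g}=1$ collapses the $i$-th cell integral to $\n{\nabla\Phi(p_i)}_{p_i}^2$ up to an error bounded by $C$; summing over the finitely many nodes gives $\lla\Phi,\Delta_E\Phi\rra_E=\sum_{i\in\nu}\n{\nabla\Phi(p_i)}_{p_i}^2+\mc{O}(C)$. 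I would then invoke the frame identity \eqref{eq:sigma-times} to split $\n{\nabla\Phi(p_i)}_{p_i}^2=\sum_{A=1}^{n}\n{\nabla_{e_A}\Phi(p_i)}_{p_i}^2$.

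The analytic heart is the finite-difference stage. Using the definition \eqref{eq:cov-der} and condition~(iv), for the out-edge $i\to j$ with $\dot\gamma_{ij}(0)=e_A(p_i)$ I would Taylor-expand $S_\Phi(t)=\Pi^E(\gamma_{ij}|_{[0,t]})^{-1}\Phi(\gamma_{ij}(t))$ at $t=0$, obtaining $\Pi^E(\gamma_{ij})^{-1}\Phi(p_j)-\Phi(p_i)=\lambda_{ij}\nabla_{e_A}\Phi(p_i)+\tfrac12\lambda_{ij}^2\,\nabla^2_{e_A}\Phi(p_i)+\mc{O}(\lambda_{ij}^3)$, uniformly in $i$ by compactness of $M$ and smoothness of $\Phi$. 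Pairing this with the antipodal in-edge of $i$, whose reversal points in direction $-e_A$, yields the same expansion with $\nabla_{e_A}\Phi$ negated but the same second covariant derivative $\nabla^2_{e_A}\Phi$, so that after taking squared norms the order-$\lambda_{ij}^3$ cross terms cancel and $\tfrac12\kappa_{ij}\big(\n{\cdot_{+}}^2+\n{\cdot_{-}}^2\big)=\n{\nabla_{e_A}\Phi(p_i)}_{p_i}^2+\mc{O}(\lambda_{ij}^2)$, using $\kappa_{ij}=\lambda_{ij}^{-2}$ and the factor $\tfrac12$ of Lemma~\ref{lem:dip}. Since $\zeta_i,\zeta_0$ are isometries and $\alpha$ represents $\Pi^E$ by Proposition~\ref{prop:gaute-to-volt}, the transported difference equals the voltage edge term, $\n{\Pi^E(\gamma_{ij})^{-1}\Phi(p_j)-\Phi(p_i)}_{p_i}=\n{\alpha_{ij}x_j-x_i}$ with $\phi=\iota^*\Phi=[\xi,x]$; reassembling the node contributions and recognizing $\tfrac12\sum_{i,j}\kappa_{ij}\n{\alpha_{ij}x_j-x_i}^2=\lla\phi,\hgen_\chi\phi\rra_Z$ via \eqref{eq:def-bracket-z} and Lemma~\ref{lem:dip} produces the stated identity with remainder $\mc{O}(C)+\mc{O}(C')$.

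I expect the main obstacle to be precisely this first-order cancellation: it requires that the $2\dim M$ neighbors of each node genuinely organize into antipodal pairs $\pm e_A$, so that the odd Taylor terms cancel and the error improves from $\mc{O}(\lambda_{ij})$ to $\mc{O}(\lambda_{ij}^2)=\mc{O}(C')$. This is exactly what conditions~(i) and~(iv) together with the regularity of the embedded lattice are meant to guarantee, and verifying that the in-edges of $i$ really reverse onto the $-e_A$ directions is the delicate geometric point. A secondary difficulty is the uniform control of the Taylor remainders over all nodes and the bookkeeping needed to reconcile the orientation and inversion conventions that relate the bundle transport $\Pi^E$ of Proposition~\ref{prop:gaute-to-volt} to the discrete voltage $\alpha$ of Lemma~\ref{lem:dip}.
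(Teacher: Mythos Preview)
Your three-stage outline---localize via the partition of unity and \eqref{eq:norm}, decompose along the orthonormal frame via \eqref{eq:sigma-times}, then replace each $\nabla_{e_A}\Phi(p_i)$ by a parallel-transport difference quotient and identify the result with the quadratic form of Lemma~\ref{lem:dip}---is exactly the paper's argument. The first two stages match verbatim.

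The divergence is in the third stage. The paper does \emph{not} pair antipodal edges or use any central-difference cancellation. It simply observes that assumption~(iv) furnishes, for each node $i$ and each frame index $A$, a unique \emph{out}-neighbour $j>i$ with $\dot\gamma_{ij}(0)=e_A(p_i)$; the forward difference $\lambda_{ij}^{-1}\big(\Pi^E(\gamma_{ij})^{-1}\Phi(p_j)-\Phi(p_i)\big)$ then approximates $\nabla_{e_A}\Phi(p_i)$ with remainder $\mc{O}(\lambda_{ij})$, and after squaring and summing over the $n$ out-edges the paper writes the error as $\mc{O}(\lambda_{ij}^2)$. The resulting double sum $\sum_{i}\sum_{j>i}\kappa_{ij}\n{\alpha_{ij}x_j-x_i}^2$ is recognised directly as $\lla\phi,L_\chi\phi\rra_Z$ via Lemma~\ref{lem:dip}, since summing over oriented edges $j>i$ already accounts for the factor $\tfrac12$.

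Your antipodal scheme is therefore an embellishment rather than a necessity, and it rests on a hypothesis the statement does not supply: assumption~(iv) aligns only the out-edges with $+e_A$ and says nothing about the in-edges pointing along $-e_A$, nor about matching edge lengths $\lambda_+=\lambda_-$ (without which your ``order-$\lambda^3$ cross terms cancel'' step fails). Your instinct that the passage from an $\mc{O}(\lambda_{ij})$ error in $\nabla_A\Phi$ to an $\mc{O}(\lambda_{ij}^2)$ error in its squared norm deserves scrutiny is correct---the paper asserts this step without the cancellation argument you were searching for---but the published proof simply proceeds with the forward difference and the stated error.
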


	\begin{remark}[proper discretization of $M$]
		Assumption (i) specifies the uniform degree of $\eta$ that matches the dimension of $M$, for a suitable discretization.

		Assumption (iii) is a volume normalization condition that can be easily satisfied by a conformal rescaling of the metric with a suitable function.

		Assumption (iv) entails that the paths $\gamma_{ij}$ must be compatible with the metric $\g$ in the sense that they must be generated by a local frame vector field.

		In summary, the stated assumptions enforce compatibility relations between the embedded graph $\iota(\eta)$ and $M$ in the metric and topological sense.
	\end{remark}

	\begin{proof}[Proof of Proposition \ref{prop:smooth-L}]
		We first evaluate the integral over the manifold
		\begin{equation}\label{eq:proof-bundle-metric-aux}
			\begin{aligned}
				\lla \Phi, \Delta_{E} \Phi \rra_{E}
				&=
				\int_M \n{\nabla \Phi(p)}_p^2 \sqrt{\det \g(p)} \; d^{n}p
				\\
				&=
				\sum_{i \in \nu} \int_{U_i} \beta_i(p) \n{\nabla \Phi(p)}_p^2 \sqrt{\det \g(p)} \; d^{n}p
				\\
				&=
				\sum_{i \in \nu} \n{\nabla \Phi(p_i)}_{p_{i}}^2 \underbrace{\int_{U_i} \beta_i(p) \sqrt{\det \g(p)} \; d^{n} p}_{\overset{\eqref{eq:norm}}{=}1} +  \; \mc{O}(C) \\
				&=
				\sum_{i \in \nu} \n{\nabla \Phi(p_i)}_{p_{i}}^2 +  \mc{O}(C),
			\end{aligned}
		\end{equation}
		where
		\begin{equation}
			C = \max_{i \in \nu} \sup_{p \in U_i} \left| \n{\nabla \Phi(p)}_{p}^2 - \n{\nabla \Phi(p_i)}_{p_{i}}^2 \right|.
		\end{equation}
		To proceed, we employ the definition of the bundle metric
		\begin{equation}\label{eq:proof-nabla-Phi-pi-aux}
			\n{\nabla \Phi(p_i)}_{p_{i}}^2
			=
			\la \nabla \Phi(p_i), \nabla \Phi(p_i) \ra_{p_{i}}
			\overset{\eqref{eq:sigma-times}}{=}
			\sum_{A=1}^{n} \la \nabla_A \Phi(p_i) , \nabla_A \Phi(p_i) \ra_{p_{i}},
			\quad \nabla_{A} \coloneq \nabla_{e_{A}}
		\end{equation}
		and approximate the covariant derivatives, as defined in \eqref{eq:cov-der}, by choosing an explicit curve $\gamma$ and a finite time $t > 0$. For some given $A$ and $p_{i}$, there is a unique pair of indices $i,j$ such that $\gamma_{ij}(0) = p_{i}$, $\gamma_{ij}(\lambda_{ij}) = p_{j}$ and $\dot{\gamma}_{ij}(0) = e_{A}(p_{i})$, by assumption, where $\lambda_{ij} = \o{len}(\gamma_{ij})$.  
		
		Approximating the derivative then yields
		\begin{equation}
			\nabla_A \Phi (p_i) =
			\frac{\Pi^{E}(\gamma_{ij})^{-1}\Phi(p_{j}) - \Phi(p_i)}{\lambda_{ij}}
			+ \mc{O}(\lambda_{ij}).
		\end{equation}
		Now we can choose a trivialization $\iota^*E \cong \nu \times Y$ which generates isometries $E_{p_i} \cong \R^d$, under which $\Phi(p_i) \in E_{p_i}$ is sent to some $x_i \in \R^d$ for $i \in \nu$. Furthermore, the operator $\Pi^E(\gamma_{ij})$ is identified with a matrix $\alpha_{ij} \in \SO(d)$ under these isometries, which allows us to evaluate    the squared norm of the first term of the above derivative approximation to
		\begin{equation}
			\left\|
			\frac{\Pi^{E}(\gamma_{ij})^{-1} \Phi(p_{j}) - \Phi(p_i)}{\lambda_{ij}}
			\right\|^{2}_{p_i}
			=
			\frac{1}{\lambda_{ij}^{2}}
			\left\|
			\alpha_{ij}x_{i} - x_{j}
			\right\|^{2}.
		\end{equation}
		Substituting into \eqref{eq:proof-nabla-Phi-pi-aux} gives
		\begin{equation}
			\n{\nabla \Phi(p_i)}_{p_{i}}^2
			= \sum_{j\in \nu, j > i}\frac{\n{\alpha_{ij} x_j - x_i}^2}{\lambda_{ij}^2} + \mc{O}(\lambda_{ij}^{2})
		\end{equation}
		and consequently
		\begin{equation}
			\sum_{i \in \nu} \n{\nabla \Phi(p_i)}_{p_{i}}^2 +  \mc{O}(C) =
			\sum_{i \in \nu} \sum_{j > i}
			\frac{\n{\alpha_{ij} x_j - x_i}^2}{\lambda_{ij}^2} + \mc{O}(C') +   \mc{O}(C)
		\end{equation}
		where
		\begin{equation}
			C' = \max_{\substack{i,j \in \nu \\ i \sim j}} \lambda_{ij}^{2}.
		\end{equation}
		Substituting into \eqref{eq:proof-bundle-metric-aux} yields the desired result
		\begin{equation}
			\begin{aligned}
			\lla \Phi , \Delta_{E} \Phi \rra_{E}
			&=
			\sum_{i \in \nu} \sum_{j > i}\frac{\n{\alpha_{ij} x_j - x_i}^2}{\lambda_{ij}^2} + \mc{O}(C') + \mc{O}(C) \\
			\overset{\text{Lemma }\ref{lem:dip}}&{=}
			\lla \phi, \hgen_{\vg} \phi \rra_{Z} + \mc{O}(C') + \mc{O}(C),
			\end{aligned}
		\end{equation}
		with $\phi = [\xi,x] = \iota^{*} \Phi$ and $\omega = [\xi,\alpha] = \iota^{*} \Pi^{E}$ and $\kappa = \iota^{*}\g$.
	\end{proof}

\bibliographystyle{alpha}
\bibliography{ym.bib}
\end{document}